\documentclass[10pt]{amsart}
\usepackage{amssymb,amsmath,mathtools}
\usepackage{enumerate,graphicx}
\usepackage{tikz,float}
\usepackage[tableposition=top]{caption}
\usepackage{hyperref}
\hypersetup{
    colorlinks=true,
    linkcolor=blue,
    filecolor=magenta,
    urlcolor=cyan,
    citecolor=red}

\usetikzlibrary{arrows.meta}
\definecolor{sectorstationary}{HTML}{267C62}
\definecolor{sectornonstationary}{HTML}{B77516}
\definecolor{sectorremote}{HTML}{396EAF}

\numberwithin{equation}{section}
\theoremstyle{plain}
\newtheorem{theorem}{Theorem}[section]
\newtheorem{proposition}[theorem]{Proposition}
\newtheorem{lemma}[theorem]{Lemma}

\theoremstyle{definition}
\newtheorem{remark}[theorem]{Remark}

\newcommand{\R}{\mathbb R}
\newcommand{\Z}{\mathbb Z}
\newcommand{\N}{\mathbb N}
\newcommand{\supp}{\operatorname{supp}}
\newcommand{\diag}{\operatorname{diag}}
\newcommand{\loc}{\mathrm{loc}}
\newcommand{\dd}{\,\mathrm d}

\DeclareMathOperator{\sgn}{sgn}
\allowdisplaybreaks[2]

\begin{document}

\title[The strong $\ell^r$ spherical maximal function]
{The strong $\ell^r$ spherical maximal function}
\author[J.~B. Lee]{Jin Bong Lee}
\author[S. Lee]{Sanghyuk Lee}
\author[J. Oh]{Jeongtae Oh}
\date{}

\begin{abstract}
Let $d\ge3$ and $2\le r<\infty$. We consider the strong maximal
operator associated with Euclidean surface averages over the
$\ell^r$ unit sphere and independent coordinate dilations.
We prove that this operator is bounded on $L^p(\mathbb R^d)$
if and only if
\[
 p>\max\left\{\frac{d+1}{d-1},\frac{r}{d-1}\right\}.
\]
We also obtain $L^p\to L^q$ bounds for the corresponding local
maximal operator. For each fixed $p$ in certain ranges, the resulting
range of $q$ is optimal up to endpoints.
The proof uses a dyadic surface decomposition and
frequency-localized $L^2$ estimates. We establish these estimates
by combining exact multiplier factorization via iterated stationary
phase with scalar Fourier inversion and $TT^*$ arguments.
In dimension three,
these estimates are combined with quantitative multiparameter
local smoothing.
\end{abstract}
\maketitle

\section{Introduction}

Let $d\ge2$, $n=d-1$, $2\le r<\infty$, and
\[
 r'=\frac{r}{r-1},\qquad
 S_r=\left\{y\in\R^d:\sum_{j=1}^d|y_j|^r=1\right\},
 \qquad
 \mathrm D_t=\diag(t_1,\ldots,t_d).
\]
Let $\sigma_r$ denote the Euclidean surface measure on $S_r$. Define
\[
 A f(x,t)
 =
 \int_{S_r} f(x-\mathrm D_ty)\,d\sigma_r(y),
 \qquad t\in(0,\infty)^d,
\]
and define the global maximal operator by
\[
 Mf(x)=\sup_{t\in(0,\infty)^d}|Af(x,t)|.
\]

Our first main result determines the sharp $L^p$ range for the global
maximal operator.

\begin{theorem}\label{thm:global-Lp}
Let $d\ge3$ and $2\le r<\infty$. For $1\le p\le\infty$,
$M$ is bounded on $L^p(\R^d)$ if and only if
\[
 p>p_c(d,r)
 :=
 \max\left\{
 \frac{d+1}{d-1},
 \frac{r}{d-1}
 \right\}.
\]
\end{theorem}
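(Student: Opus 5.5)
The plan splits into necessity and sufficiency.

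\emph{Necessity.} I would use two counterexamples.

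For $p\le\frac{d+1}{d-1}$, the condition already fails for the one-parameter subfamily $t_1=\dots=t_d$. The dilates of $S_r$ cover a neighbourhood of the origin, so the standard Stein-type example applies: take $f(y)=|y|^{-(d-1)}\log(1/|y|)^{-1}\chi_{|y|<1/2}$. Then $f\in L^p$ for $p\le d/(d-1)$, while the maximal function is infinite on a set of positive measure. This kills $p\le d/(d-1)$. To reach $p\le (d+1)/(d-1)$, I would use the strong (independent-dilation) structure. Near a point of $S_r$ where all coordinates are nonzero and the Gaussian curvature is nonvanishing, one can translate, via the $d$ free parameters, a small cap tangentially to a fixed hyperplane. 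Take $f=\chi$ of a $\delta\times\dots\times\delta\times\delta^2$ slab. Then $Mf\gtrsim\delta$ on a set of measure $\sim1$: for each $x$ in a unit set, choose $t$ so that the cap through $x$ is tangent to the slab. This gives $\delta\lesssim \delta^{(d+1)/p}$, i.e.\ $p\ge (d+1)/(d-1)$. For the endpoint I would replace the slab by a log-summed family of slabs.

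For $p\le r/(d-1)$, I would exploit the flat points of $S_r$ near $e_d$, where $S_r$ looks like $y_d=1-\frac1r\sum_{j<d}|y_j|^r$. Take $f=\chi$ of the set $\{|x'|\le1,\ |x_d|\le\delta\}$. The part of $S_r$ within $\delta$ of the tangent plane at $e_d$ has surface measure $\sim\delta^{(d-1)/r}$. Choosing $t_d$ to align $x_d-t_d$ with the slab, and $t_1,\dots,t_{d-1}\sim1$, gives $Mf\gtrsim\delta^{(d-1)/r}$ on a set of measure $\sim1$, while $\|f\|_p\sim\delta^{1/p}$. This forces $p\ge r/(d-1)$, and a logarithmic superposition handles equality.

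\emph{Sufficiency.} I would first reduce to the local operator, with $t_j\in[1,2]$, uniformly over rescalings. By the dilation invariance $f\mapsto f(\mathrm D_s\cdot)$, it suffices to bound
\[
\sup_{k\in\Z^d}\sup_{t\in\prod[2^{k_j},2^{k_j+1}]}|Af|.
\]
Next I would decompose $\sigma_r$ dyadically according to the distance to the coordinate hyperplanes, i.e.\ by the sizes $|y_j|\sim2^{-l_j}$. On each piece the surface is a graph whose Hessian has eigenvalues $\sim 2^{-l_j(r-2)}$. After an anisotropic rescaling, each piece becomes a curved hypersurface of bounded geometry with measure $\sim\prod 2^{-l_j(r-1)}$ (up to the appropriate normalisation). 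The pieces near the coordinate hyperplanes themselves need separate treatment. Each rescaled piece is further Littlewood–Paley decomposed in each frequency variable.

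The multiparameter supremum over the dyadic blocks I would control by the strong maximal function together with a multiparameter square function, as in Littlewood–Paley theory for product spaces. The supremum over $t\in[1,2]^d$ I would handle by Sobolev embedding in $t$, fed by the frequency-localised $L^2$ estimates from stationary phase. These give decay $2^{-j(d-1)/2}$ times the rescaled measure, at a cost of $2^{j d/2}$ from the $d$ derivatives in $t$. This yields $L^2$ with room to spare for $d\ge3$. Interpolating against $L^1$ or $L^\infty$ trivial bounds, I would sum over the frequency scales $j$ and the flatness scales $l$. The condition $p>(d+1)/(d-1)$ should come from the curvature summation, and $p>r/(d-1)$ from the sum over $l$ of the measure factors $2^{-l(r-1)}$ against the $L^p$ loss $2^{l\cdot(\dots)/p}$.

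\emph{Main obstacle.} I expect the hardest step to be reaching the sharp exponent $(d+1)/(d-1)$ rather than something worse. Naive Sobolev in $d$ time variables loses too much. I would need the $L^2$ estimates with $d$-parameter derivative gain, i.e.\ exploiting that $\partial_{t_j}$ only costs the $j$-th frequency size, together with local smoothing in $d=3$ (the multiparameter local smoothing the abstract mentions), to beat the trivial $L^p$ interpolation near $p=(d+1)/(d-1)$.
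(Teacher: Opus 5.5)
Your necessity argument is correct and is essentially the paper's. Its test functions $\rho(z)^{-n}/\log(1/\rho(z))$ and $|z_d|^{-n/r}/\log(1/|z_d|)$ are log-weighted versions of your two slabs. Here $\rho$ is an anisotropic quasi-norm adapted to a fixed hyperplane $e^\perp$, to which $x-\mathrm D_{t(x)}S_r$ is made tangent at the origin for all $x$ in an open set; the second function is used at the flat north pole. One slip: on the tangent cap the average is $\gtrsim\delta^{d-1}$, not $\delta$. As written, your inequality would give $p\ge d+1$.

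The genuine gap is in sufficiency. Controlling $\sup_{t\in[1,2]^d}$ by Sobolev embedding in $t$ cannot reach the sharp range. By your own count, the frequency-$2^j$ piece is bounded on $L^2$ only by $2^{-j(d-1)/2}\cdot2^{jd/2}=2^{j/2}$, which grows. So ``$L^2$ with room to spare for $d\ge3$'' is false: the endpoint case of your own necessity argument shows $M_{\loc}$ is unbounded on $L^2(\R^3)$. Interpolating such a bound with the trivial $L^1$ bound gives nothing for $p<2$, yet $p_c(d,r)<2$ whenever $d\ge4$ and $r<2(d-1)$. Your proposed remedy does not help. On a patch away from the coordinate hyperplanes the stationary frequencies satisfy $|\xi_i|\sim2^j$ for every $i$, so each $\partial_{t_i}$ costs the full $2^j$.

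What is missing is an $L^2$ maximal bound of size $R^{-(d-3)/4}$ on such a patch (in general $R^{1/2}V_\delta\prod_i\lambda_i^{-1/4}$ with $\lambda_i=\max\{1,R\delta_i^r\}$). Interpolated with $\|M_{\loc}P_R^{(d)}f\|_1\lesssim R\|f\|_1$, it gives exactly $p>(d+1)/(d-1)$. The paper proves it without Sobolev embedding: it linearizes the supremum with an arbitrary measurable $t(x)$ and runs $TT^*$ with kernel bounds uniform in the selector. This needs three structural inputs:
\begin{enumerate}[(i)]
\item the exact sector factorization of the multiplier as $e^{-ih_S(t,\eta)}a$, with $h_S$ the dual $\ell^{r'}$ norm, obtained by iterated stationary phase;
\item scalar Fourier inversion $e^{-iRZ^{1/r'}}=\int k_R(s)e^{isZ}\,ds$ with $\|k_R\|_2\lesssim1$, which splits the phase into one-dimensional terms $\omega_ix_i\eta_i+s\,c_i(t)|\eta_i|^{r'}$;
\item the bound $\bigl|\int e^{i(Av+B|v|^{r'})}\vartheta(v)\,dv\bigr|\lesssim(1+|A|)^{-1/2}$ uniformly in $B$, which removes the selector.
\end{enumerate}
Product Hardy--Littlewood--Sobolev then gives the fixed-height $L^2\to L^4$ bound, hence the $L^2$ bound after spatial localization, and also the $L^2\to L^{2d}$ bound. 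In $d=3$, local smoothing enters only alongside these bounds: interpolating the $L^6$ smoothing estimate with a Sobolev-based $L^2$ bound reaches only $p>4$.

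Your local-to-global step is also incomplete, in two ways:
\begin{enumerate}[(i)]
\item For $1<p<2$ the product square-function argument needs $\sum_k\|\Delta_{j-k}f\|_p^p\lesssim\|f\|_p^p$, which fails for $p<2$. The paper instead interpolates between $L^p(\ell^p)$ with decay and $L^p(\ell^\infty)$ with the a priori bound $B_N$, then bootstraps $B_N\le C_1+C_2B_N^{1-p/2}$.
\item The mixed low/high-frequency terms are controlled by maximal operators of marginals of $\sigma_r$, whose densities are unbounded. They therefore require induction on dimension rather than the strong maximal function.
\end{enumerate}
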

The necessity, including failure at the critical exponents, is proved
in Section~\ref{sec:global-necessity}.

For $r=2$, $M$ is the strong spherical maximal operator.
Its one-parameter counterpart is the classical spherical maximal
operator, whose sharp strong-type range is $p>d/(d-1)$ for
$d\ge3$, by Stein \cite{Stein76}, and $p>2$ for $d=2$, by
Bourgain \cite{Bo1986}. The adjective ``strong'' refers to
independent coordinate dilations, in analogy with strong
differentiation of integrals; see \cite{JMZ1935}.

An early study of a related planar operator is due to
Erdo\u{g}an \cite{Erdogan2003}, who obtained Sobolev estimates
for a local maximal operator over rotated ellipses.
For axis-parallel ellipses, Lee, Lee, and Oh \cite{LLO2025-1}
proved that the global strong circular maximal operator is
bounded on $L^p$ for $p>4$, and that boundedness fails for
$p\le3$. Chen, Guo, and Yang \cite{CGY} subsequently obtained
the sharp local range $p>3$ by combining multi-parameter
local smoothing with the incidence estimates of Pramanik,
Yang, and Zahl \cite{PYZ}; see
\cite[Corollary~1.2 and Section~3]{CGY}.
The resulting frequency-localized bounds, combined with the
local-to-global argument in \cite[Section~2.1]{LLO2025-1},
yield the sharp global strong circular range $p>3$.

In dimensions $d\ge3$, Lee, Lee, and Oh \cite{LLO2025-2}
established boundedness of the strong spherical maximal
operator for $p>2(d+1)/(d-1)$. Hickman and Zahl \cite{HZ2025}
improved this range to $p>2$, which is sharp when $d=3$.
They also proved failure of boundedness for
$p\le(d+1)/(d-1)$, including the critical exponent; see
\cite[Appendix~A]{HZ2025}.
Theorem~\ref{thm:global-Lp} determines the sharp global range
for the full family of $\ell^r$ spheres considered here.

To prove Theorem~\ref{thm:global-Lp}, we first obtain quantitative
frequency-localized estimates for the local maximal operator
\[
 M_{\loc}f(x)
 =
 \sup_{t\in[1,2]^d}|Af(x,t)|.
\]
Local spherical maximal operators admit $L^p$-improving estimates;
see \cite{ScSo1997, Lee2003} for the one-parameter circular case.
In addition to the diagonal estimate needed for Theorem~\ref{thm:global-Lp}, our argument gives the following $L^p$-improving range.
\begin{theorem}\label{thm:lplq}
Let $d\ge4$, $2\le r<\infty$, and $1\le p,q<\infty$. Then
\[
 \|M_{\loc}f\|_{L^q(\R^d)}
 \lesssim_{d,r,p,q}
 \|f\|_{L^p(\R^d)}
\]
whenever
\[
 p>\frac{d+1}{d-1},
 \qquad
 p\le q<dp,
\]
and
\[
 \frac{d-1}{p}-\frac{d(d-3)}{q}<1,
 \qquad
 \frac{r+1}{p}-\frac{r(d-2)+1}{q}<1,
 \qquad
 \frac{r+d-1}{p}-\frac{d-1}{q}<d-1.
\]
\end{theorem}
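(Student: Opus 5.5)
We plan to prove Theorem~\ref{thm:lplq} by a dyadic decomposition of the surface $S_r$ combined with Littlewood--Paley frequency localization, followed by interpolation of frequency-localized $L^p\to L^q$ bounds whose constants decay geometrically.

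\textbf{Decomposition.} First decompose $\sigma_r$ with a partition of unity adapted to the distance from the coordinate hyperplanes. On a piece where $|y_j|\sim 2^{-k_j}$ for $j$ in some index set, the Gaussian curvature of $S_r$ degenerates like $\prod_j 2^{-k_j(r-2)}$. After rescaling each such coordinate by $2^{-k_j}$, the piece becomes a surface with nonvanishing curvature. The rescaling carries a Jacobian gain of $2^{-\sum k_j}$ in the measure, and the dilations $\mathrm D_t$ commute with the rescaling. Next split $f=\sum_{\lambda}P_\lambda f$ dyadically in frequency; it is natural to do this per coordinate, since the multiplier $\widehat{\sigma_r}(\mathrm D_t\xi)$ factors through iterated stationary phase. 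For each piece, indexed by $(k,\lambda)$, we aim for three model estimates for the maximal operator $\sup_{t\in[1,2]^d}$:
\begin{enumerate}[(i)]
\item an $L^1\to L^\infty$ bound of order $\lambda\cdot(\text{volume factor})$, from the kernel size;
\item an $L^2\to L^2$ bound obtained from the frequency-localized $L^2$ estimates via the $TT^*$ argument. Here we use Sobolev embedding in the $d$ parameters $t_j$, which costs $\lambda^{1/2}$ per parameter, against decay $\lambda^{-n/2}$ from the multiplier, with explicit losses in $2^{k_j}$;
\item an $L^1\to L^1$ (or $L^\infty\to L^\infty$) bound, namely a trivial bound of order the measure of the piece times $\lambda^{d}$ from the sup over the $t$-parameters.
\end{enumerate}

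\textbf{Interpolation and summation.} Interpolating (i)--(iii) gives, for each $(p,q)$, a bound of the form $\lambda^{a(p,q)}\,2^{-\sum_j k_j b(p,q)}$ with $a,b$ affine in $(1/p,1/q)$. Summability over $\lambda\ge1$ and over the $k_j\ge0$ then requires $a<0$ and $b>0$. We expect the conditions of the theorem to arise as follows:
\begin{itemize}
\item $p>\frac{d+1}{d-1}$ and $\frac{d-1}{p}-\frac{d(d-3)}{q}<1$ come from summing in $\lambda$ for the nondegenerate pieces;
\item $\frac{r+1}{p}-\frac{r(d-2)+1}{q}<1$ comes from the pieces near a single coordinate hyperplane, where the flatness of order $r$ enters;
\item $\frac{r+d-1}{p}-\frac{d-1}{q}<d-1$ comes from the scale-$1$ piece with the most degenerate geometry, near the axes;
\item $q<dp$ comes from scaling in the $d$ independent dilations.
\end{itemize}
For $d\ge4$ each piece satisfies $n\ge3$, so the decay $\lambda^{-n/2}$ beats the $\lambda^{d/2}$ Sobolev loss sufficiently in $L^2$, and no local smoothing input is needed; this is why $d=3$ is excluded. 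Near the boundary of the region we may need the Bourgain interpolation trick to handle sums of pieces whose estimates are individually only borderline, but since only the open range is claimed, strict inequalities should allow plain geometric summation.

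\textbf{Main obstacle.} The main obstacle is step (ii): obtaining $L^2$ bounds with the correct $2^{k_j}$-dependence, uniformly over all multiparameter dilations, when the rescaled surfaces have curvature that is only uniformly nondegenerate after anisotropic rescaling. We will first try to factor the multiplier exactly as a product of one-dimensional oscillatory factors using iterated stationary phase in each coordinate. We would then apply scalar Fourier inversion in each $t_j$ to control the $t$-derivatives that Sobolev embedding requires. If the factorization fails near degenerate directions, we would fall back on $TT^*$ with van der Corput-type kernel estimates.
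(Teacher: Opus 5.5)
There is a genuine gap, and it is in your step (ii), on which the whole argument rests. Sobolev embedding in the $d$ dilation parameters costs $\lambda^{1/2}$ per parameter, so $\lambda^{d/2}$ in total. The multiplier decays only like $\lambda^{-n/2}=\lambda^{-(d-1)/2}$. The net $L^2$ bound is therefore $\lambda^{1/2}$, which \emph{grows} for every $d$. Your claim that $\lambda^{-n/2}$ beats $\lambda^{d/2}$ once $d\ge4$ is false, since $n<d$. Refining the count to $\prod_j|\xi_j|^{1/2}$ does not help: on patches away from the coordinate hyperplanes, every stationary frequency has $|\xi_j|\sim\lambda$.

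Consequently the interpolation collapses. Correct (iii) to the true elementary bounds, which carry no $\lambda^d$: these are $V_\delta$ on $L^\infty$, $\lambda V_\delta$ on $L^1$, and $\lambda$ from $L^1$ to $L^\infty$, where $V_\delta=\prod_i\delta_i$ is your Jacobian factor. Even then, every convex combination of your points has $\lambda$-exponent exactly $1/p>0$, so nothing is summable for any finite $p$.

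Your list also contains no $L^2$-improving estimate, and that cannot be patched by a better $L^2\to L^2$ bound alone. The paper's $L^2$ bound on nondegenerate patches is $\lambda^{-(d-3)/4}$. Interpolated with $L^1\to L^\infty$, it reaches only $q<\frac{d-1}{2}p$. Adding an $L^2\to L^4$ bound with the same exponent reaches $q<(d-1)p$. The boundary $q<dp$ requires an $L^2\to L^{2d}$ estimate.

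The missing idea is to avoid $t$-derivatives altogether. The paper linearizes the supremum by an arbitrary measurable selector $t(x)$ and runs $TT^*$ directly, which works only because the phase is first made separable.

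\begin{enumerate}
\item Localize only $\xi_d$, at scale $R$ (your $\lambda$), and split the horizontal frequencies into sectors at scales $\lambda_i=\max\{1,R\delta_i^r\}$. Iterated stationary phase then gives an exact factorization $C_{R,\delta,S,k}\,e^{-ih_S(t,\eta)}a(t,\eta)$, where $h_S$ is the dual $\ell^{r'}$ norm.
\item Scalar Fourier inversion is applied to the outer power of this phase, not ``in each $t_j$'': $e^{-iRZ^{1/r'}}=\int k_R(s)e^{isZ}\dd s$ with $\|k_R\|_2\lesssim1$. This turns the nonlinear phase into $s\sum_j c_j(t)|\eta_j|^{r'}$.
\item The one-dimensional bound $\bigl|\int e^{i(Av+B|v|^{r'})}\vartheta(v)\dd v\bigr|\lesssim(1+|A|)^{-1/2}$, uniform in $B$, bounds the $TT^*$ kernel by $\prod_i(1+\omega_i|x_i-z_i|)^{-1/2}$, whatever the values $t(x),t(z)$.
\item Cauchy--Schwarz in $s$ and product Hardy--Littlewood--Sobolev give the fixed-height $L^2\to L^4$ bound $R^{1/2}V_\delta\prod_i\lambda_i^{-1/4}$. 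Spatial localization then gives the full $L^4$ and $L^2$ bounds.
\item Keeping the scalar variable inside the full $TT^*$ (Plancherel in it, plus H\"older) gives the $L^2\to L^{2d}$ bound $R^{(d-1)/d}V_\delta\prod_i\lambda_i^{-1/d}$.
\end{enumerate}

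Only these estimates, together with the elementary ones, let the six-point interpolation and the summation over patches produce the stated region. The restriction $d\ge4$ enters here, through the decay $R^{(3-d)/4}$, not through a Sobolev count.
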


In dimension three we obtain the following sufficient improving range.
\begin{theorem}\label{thm:lplq-d3}
Let $d=3$ and $2\le r<\infty$. Then
\[
 \|M_{\loc}f\|_{L^q(\R^3)}
 \lesssim_{p,q,r}\|f\|_{L^p(\R^3)}
\]
whenever
\[
 2<p<\infty,\qquad p\le q\le2p,\qquad
 \frac{r+2}{p}-\frac2q<2.
\]
\end{theorem}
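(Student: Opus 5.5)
We reduce Theorem~\ref{thm:lplq-d3} to frequency-localized estimates with exponential gain and interpolate, following the scheme described in the abstract. Fix a smooth dyadic partition of $S_r$: away from the coordinate hyperplanes the surface is smooth with nonvanishing Gaussian curvature. Near a coordinate hyperplane $\{y_j=0\}$ the curvature degenerates like $|y_j|^{r-2}$, so we cut into pieces where $|y_j|\sim2^{-k}$ for the degenerate coordinates. After an anisotropic rescaling $y_j\to2^{-k}y_j$, which commutes with the coordinate dilations $\mathrm D_t$, each piece becomes a curved surface of unit size. The price is a measure factor $2^{-k}$ and a change of the $L^p\to L^q$ norm by the Jacobian factor $2^{k(1/p-1/q)}$ in the affected variable. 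Summing over $k$ then yields the condition $\frac{r+2}{p}-\frac2q<2$: the exponent $r$ enters through the decay $|y_j|^{r-1}$ of the normal component and through the size of the dual frequency region.

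For each rescaled piece we take a Littlewood--Paley decomposition $f=\sum_\lambda P_\lambda f$ and write $A_\lambda^k$ for the corresponding pieces of the averaging operator. The goal is an estimate
\[
\Big\|\sup_{t\in[1,2]^3}|A_\lambda^kf(\cdot,t)|\Big\|_{L^q}\lesssim 2^{-\epsilon(p,q)\lambda}2^{k\beta(p,q)}\|f\|_{L^p}
\]
at the vertices of the claimed region, followed by interpolation. The vertices are handled as follows.

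First, $L^1\to L^\infty$ is trivial. Since the supremum is over three parameters $t$, it costs $\lambda^{3}$ against the kernel bound $\lambda^{d}$ times the $2^{k}$ factors.

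Second, $L^2\to L^2$ and $L^2\to L^4$: we use the frequency-localized $L^2$ estimates from the stationary phase factorization of $\widehat{\sigma_r}$. Sobolev embedding in the three $t$-variables loses $\lambda^{3/2}$ against the decay $\lambda^{-(d-1)/2}=\lambda^{-1}$, which is insufficient in $d=3$.

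Third, this is where quantitative multiparameter local smoothing enters. On $L^p(\R^3\times[1,2]^3)$, $p>2$, we gain an extra $\lambda^{-3/p+\epsilon}$ or more. We split $\sup_t$ by the fundamental theorem of calculus in each $t_i$, which gives $\lambda^{3/p}\|A_\lambda\|_{L^p_{x,t}}$ plus derivative terms. The aim is $\|A_\lambda^kf\|_{L^p(\R^3\times[1,2]^3)}\lesssim\lambda^{-1-\delta+\dots}\|f\|_p$ with $\delta>3/p-1$ for $p>2$, which is the condition $p>2$ at $q=p$. The $L^p\to L^{2p}$ endpoint then comes from combining this with the $L^2\to L^4$-type $TT^*$ bound via Bernstein in $x$ (frequency $\lambda$ on the unit scale), giving $q\le2p$.

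The main obstacle is the multiparameter local smoothing at $p$ close to $2$ with explicit dependence on $k$. I would first try a decoupling argument for the cone-like surface $\{(\xi,\,t\odot\xi\text{-phase})\}$ generated by the three independent dilations. This surface has rank-one degeneracy only in the scaling direction per coordinate, so $\ell^p$ decoupling for the two nondegenerate directions should give $\lambda^{-1/p+\epsilon}$ smoothing beyond the fixed-time estimate. The $2^{k}$-dependence has to be tracked through the rescaling so that the geometric sum in $k$ converges exactly under $\frac{r+2}{p}-\frac2q<2$. Finally, the $\lambda$-sum converges in the open region, and interpolation (real interpolation to reach $q=p$ and $q=2p$ boundaries where allowed) completes the proof.
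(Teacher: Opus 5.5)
There is a genuine gap, and it sits where the theorem is nontrivial: $p$ near $2$ and the line $q=2p$.

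Every maximal bound in your scheme passes through the Sobolev step $\|\sup_t|F|\|_p\lesssim\lambda^{3/p}\sum_{\beta\in\{0,1\}^3}\|\lambda^{-|\beta|}\partial_t^\beta F\|_{L^p_{x,t}}$. No local smoothing estimate can make this decay near $p=2$. Take $\widehat f$ a bump adapted to $\{|\xi|\sim\lambda\}$ inside a stationary cone of a nondegenerate patch. Then $|A_\lambda f(x,t)|\sim\lambda$ on a $\lambda^{-1}$-neighbourhood of a piece of $\mathrm D_tS_r$. Hence $\|A_\lambda f\|_{L^p_{x,t}}\gtrsim\lambda^{-2+2/p}\|f\|_p$, and the Sobolev step can deliver nothing better than $\lambda^{-2+5/p}$, which does not decay for $p\le5/2$.

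With the $L^6$ gain that decoupling actually gives, interpolating against your lossy $L^2$ bound $\lambda^{3/2}\cdot\lambda^{-1}=\lambda^{1/2}$ yields decay only for $p>4$. A Bernstein step toward $q=2p$ costs a further $\lambda^{3(1/p-1/q)}$, which no available decay absorbs.

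What is missing is a \emph{critical} $L^2$ theory for the maximal operator that uses no $t$-derivatives at all. In the paper the supremum is linearized by an arbitrary measurable selector $t=t(x)$. The exact sector factorization of Lemma~\ref{lem:local-exact-sector} and the scalar inversion $e^{-ih_S}=\int k_R(s)e^{isQ_t(\eta)}\,ds$, with $\|k_R\|_2\lesssim1$, reduce the phase to separated one-dimensional powers $|\eta_i|^{r'}$. In these, $t(x)$ enters only through coefficients. Cauchy--Schwarz in $s$, a fixed-height $TT^*$ argument with Lemma~\ref{lem:mixed-linear-power}, and product Hardy--Littlewood--Sobolev then give \eqref{eq:dyadic-24} and \eqref{eq:dyadic-22}, which are $R^0$ on nondegenerate annuli. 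A full spatial $TT^*$ gives the $L^2\to L^{12}$ bound \eqref{eq:dyadic-2-12}. The $L^6$ estimate of Proposition~\ref{prop:d3-annular-smoothing} is used only to tip these critical bounds into decay by interpolation: through $\mathsf D,\mathsf C,\mathsf O$ on $q=p$, and through $\mathsf A$, $\mathsf H=\frac13\mathsf B+\frac23\mathsf C$, $\mathsf O$ on $q=2p$.

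Your handling of the degenerate pieces also fails as written. After the horizontal rescaling $v_i=\delta_iw_i$, the piece $(w,\Phi_\delta(w))$ is not a unit-curved surface. Its Hessian is $\asymp\diag(\delta_1^r,\delta_2^r)$, so the stationary-phase gain in direction $i$ is $(R\delta_i^r)^{-1/2}$, and it disappears once $R\delta_i^r<1$. This, not a Jacobian factor, is how $r$ enters. For the same reason, the local smoothing bound available on such a piece (Proposition~\ref{prop:d3-annular-smoothing}) loses a power $\delta_*^{-A}$, so it cannot be summed over annuli.

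The paper uses two regimes. Interpolating \eqref{eq:dyadic-22}, \eqref{eq:dyadic-24} and the $L^\infty$ bound, and using $\lambda_i\ge R\delta_i^r$, gives the $R$-uniform bound $V_\delta^{\,b}$ with $b=1-\frac{r+2}{2p}+\frac1q$. Its positivity is exactly the condition $\frac{r+2}{p}-\frac2q<2$. This bound is summed over the annuli with $\delta_*\le R^{-\eta}$ to give $R^{-\eta b}$. The $O((\log R)^2)$ annuli with $\delta_*>R^{-\eta}$ use the decaying bound $\delta_*^{-B}R^{-e}$, with $\eta$ small in terms of $e$ and $B$. Your proposal states the condition but provides no estimate that is uniform in the frequency scale and geometrically decaying in the annulus scale, which is exactly what the sum over $k$ requires.
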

The diagonal specialization is $p>\max\{2,r/2\}$.
Theorem~\ref{thm:lplq-d3} does not determine the full off-diagonal
range. Its proof combines the frequency-localized
$L^2$ estimates with an annular $L^6$ estimate; the local smoothing
argument underlying the latter is given in Appendix~\ref{app:d3-smoothing}.

Our sufficient region agrees with the region allowed by the tests in
Proposition~\ref{prop:global-necessity}, except for the upper boundary, when
$d\ge4$ and $p\ge\max\{2,(2r+d-1)/d\}$, or when
$d\ge3$, $r>3$, and
$p_c(d,r)<p\le(r+d-2)/(d-1)$.
The remaining boundaries are, respectively, $q=dp$ and
$(r+d-1)/p-(d-1)/q=d-1$.

Gaps remain in other parts of the exponent region.
For $r=2$ and $d\ge4$, our sufficient range is smaller than
the range allowed by the necessary conditions when
$p_c(d,2)<p<2$.
For $d=3$ and $r=2$, we obtain $p\le q\le2p$,
whereas the necessary conditions allow $p\le q\le3p$.

For comparison, consider in dimensions $d\ge4$ the local strong
maximal operator associated with a smoothly cut off compact patch of the
shifted quadratic graph $y=(v,c+|v|^2)$, where $c\ne0$.
For its separated quadratic phase, the tangential $TT^*$ estimate
and spatial localization, followed by interpolation and dyadic
summation, give the sufficient range
\[
 p>\frac{d+1}{d-1},\qquad p\le q<(d-1)p;
\]
see Remark~\ref{rem:shifted-quadratic-model}.
This direct summation does not determine the boundary $q=(d-1)p$.
The spherical estimates extend beyond this range through the additional
oscillation described below.

Our proof begins with a finite decomposition of $S_r$ into coordinate
graph patches. When $r=2$, the sphere has nonvanishing Gaussian
curvature, and we can use the stationary phase expansion of the Fourier
transform of surface measure directly. For $r>2$, the curvature
degenerates along the coordinate hyperplanes. We further decompose
each graph patch into dyadic patches within the orthants and obtain
an exact factorization of each frequency-sector multiplier by iterated
stationary phase.

The canonical stationary-phase function for the $\ell^r$ sphere is
the dual norm
\[
 h(\xi)=\left(\sum_{j=1}^d|\xi_j|^{r'}\right)^{1/r'}.
\]
For $r=2$, this is $h(\xi)=|\xi|$, the phase of the classical wave
propagator. Scalar Fourier inversion expresses $e^{-iRh}$ as an
integral of exponentials with separated phases. After linearizing the
parameter supremum, we obtain an oscillatory integral whose nonlinear
phase is a sum of one-dimensional powers of degree $r'$.
These powers are quadratic when $r=2$.

In the Euclidean case, the scalar Fourier step takes the form
\[
 e^{-iR\sqrt z}=\int_{\R}K_R(s)e^{isz}\,ds,
 \qquad z\asymp1,
\]
where a smooth cutoff equal to one on the relevant compact positive
$z$-interval is included in the definition of $K_R$.
This representation is related to the subordination formula in
M\"uller--Seeger~\cite[Lemma~4.3]{MullerSeeger}.
For the $L^2\to L^4$ estimate, an $L^2$ bound in the scalar variable
reduces the problem to a $TT^*$ estimate in the $d-1$ tangential
coordinates. For the $L^2\to L^{2d}$ estimate in dimensions $d\ge4$,
we retain the scalar variable in the full $TT^*$ calculation and combine
it with oscillation in all $d$ spatial coordinates. This use of the
square-root phase gives estimates beyond the range obtained by direct
summation for the quadratic model.

The proof is organized as follows. Section~\ref{sec:coordinate-decay}
transfers vertical frequency decay to every coordinate.
Section~\ref{sec:frequency-localized} states the normalized estimates
and derives the local theorems by interpolation and dyadic summation.
Section~\ref{sec:proof-local-patch} proves these estimates using
factorization and $TT^*$; the iterated stationary-phase argument is
given in Section~\ref{sec:proof-lem-exact}.
Section~\ref{sec:global-theorem} completes the global theorem and
proves the necessary conditions, including failure at the critical
exponents. Appendix~\ref{app:d3-smoothing} supplies the additional
local smoothing input required for the three-dimensional results.

In the Euclidean case $r=2$ and dimensions $d\ge4$, the global
range in Theorem~\ref{thm:global-Lp} coincides with that obtained
by Mingfeng Chen \cite{Chen2026}.
Chen uses Fourier inversion in the defining function of the
sphere and, after squaring one input coordinate, reduces the
local $L^2$ estimate to a quadratic $TT^*$ argument.
Our proof uses exact factorization of the surface-measure
multiplier and scalar Fourier inversion to obtain the
corresponding $TT^*$ estimates.
The present results also cover $\ell^r$ spheres for $r>2$
and provide the local $L^p$-improving estimates stated above.

\section{Coordinatewise frequency decay}
\label{sec:coordinate-decay}

To pass from $M_{\loc}$ to $M$, we need frequency decay in every
coordinate. The following lemma transfers decay in the vertical
frequency $\xi_d$ to each coordinate $\xi_i$. Its proof uses only
one tangential derivative of a kernel regularized in the vertical
variable, without coordinate symmetry or repeated integration
by parts.

Let $d\ge2$, let $U\subset\R^{d-1}$ be open, and suppose that
$a\in C_c^1(U)$ and $\phi\in C^1(U)$. For
$t=(t_1,\ldots,t_d)\in[1,2]^d$, set
\[
 A f(x,t)
 =
 \int_U
 f\bigl(x-\mathrm D_t(y,\phi(y))\bigr)a(y)\dd y,
 \qquad
 M_{\loc}f(x)
 =
 \sup_{t\in[1,2]^d}|A f(x,t)|.
\]
The amplitude $a$ need not be nonnegative. Initially, all operators
are applied to Schwartz functions.

Choose an even function $\psi_0\in C_c^\infty(\R)$ such that
$\psi_0=1$ on $[-1,1]$ and $\supp\psi_0\subset[-2,2]$, and put
$\psi(\eta)=\psi_0(\eta)-\psi_0(2\eta)$. Define
\[
 \widehat{L_0^{(i)}f}(\xi)
 =
 \psi_0(\xi_i)\widehat f(\xi),
 \qquad
 \widehat{P_m^{(i)}f}(\xi)
 =
 \psi(2^{-m}\xi_i)\widehat f(\xi),
 \quad m\ge1.
\]
Thus $P_m^{(i)}$ is supported where
$2^{m-1}\le|\xi_i|\le2^{m+1}$, and
\begin{equation}\label{eq:partition}
 I=L_0^{(i)}+\sum_{m\ge1}P_m^{(i)}.
\end{equation}

\begin{lemma}\label{lem:global-decay}
Fix $1<p<\infty$. Suppose that there are $C_0<\infty$ and
$\epsilon>0$ such that
\begin{equation}\label{eq:assumption}
 \|M_{\loc}P_m^{(d)}f\|_{L^p(\R^d)}
 \le
 C_0 2^{-\epsilon m}\|f\|_{L^p(\R^d)},
 \qquad m\ge1.
\end{equation}
Then, with $\gamma=\epsilon/(2+\epsilon)$, for every
$i\in\{1,\ldots,d\}$ and $k\ge1$,
\begin{equation}\label{eq:conclusion}
 \|M_{\loc}P_k^{(i)}f\|_{L^p(\R^d)}
 \le
 C2^{-\gamma k}\|f\|_{L^p(\R^d)}.
\end{equation}
The constant is independent of $k$ and $f$, and may depend on
$d,p,C_0,\epsilon$, the cutoffs, the support of $a$, and the $C^1$
bounds for $a$ and $\phi$ on a neighborhood of $\supp a$.
\end{lemma}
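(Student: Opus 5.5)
For $i=d$ the statement is just \eqref{eq:assumption} with $\gamma\le\epsilon$, so the work is the case $i<d$. Fix $k\ge1$ and a threshold $m_0\in[0,k]$, to be chosen later. Split in the vertical frequency using \eqref{eq:partition} with index $d$:
\[
 P_k^{(i)}f=\sum_{m>m_0}P_m^{(d)}P_k^{(i)}f+\Bigl(L_0^{(d)}+\sum_{1\le m\le m_0}P_m^{(d)}\Bigr)P_k^{(i)}f=:\mathrm{I}+\mathrm{II}.
\]

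\emph{High vertical frequencies.} For each $m>m_0$, apply \eqref{eq:assumption} to $P_k^{(i)}f$, using $\|P_k^{(i)}f\|_p\lesssim\|f\|_p$. The terms are summable in $m$, giving
\[
 \|M_{\loc}\mathrm I\|_p\lesssim 2^{-\epsilon m_0}\|f\|_p .
\]

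\emph{Low vertical frequencies.} The second term $\mathrm{II}$ equals $\psi_0(2^{-m_0}\xi_d)\,\psi(2^{-k}\xi_i)\widehat f$ in frequency. Here I use one tangential derivative. Write $P_k^{(i)}=2^{-k}\partial_i\tilde P_k$, where $\tilde P_k$ has multiplier $2^k\psi(2^{-k}\xi_i)/(i\xi_i)$ and is bounded on $L^p$. Set $g=\tilde P_k\,\psi_0(2^{-m_0}D_d)f$. Then $\|g\|_p\lesssim\|f\|_p$ and, by Bernstein in $x_d$, $\|\partial_d g\|$ is controlled by $2^{m_0}$ times a function of the same type. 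The chain rule gives
\[
 \partial_{y_i}\bigl[g(x-\mathrm D_t(y,\phi(y)))\bigr]=-t_i(\partial_i g)(\cdot)-t_d\,\partial_i\phi(y)(\partial_d g)(\cdot).
\]
Solve this for $\partial_i g$ (recall $t_i\in[1,2]$) and integrate by parts in $y_i$; this uses only $a\in C_c^1$. The result is
\[
 A(\partial_i g)(x,t)=t_i^{-1}\!\int g(\cdot)\,\partial_i a\,dy-\frac{t_d}{t_i}\!\int(\partial_d g)(\cdot)\,\partial_i\phi\, a\,dy .
\]
Both are surface averages with merely continuous, bounded densities. Their supremum over $t$ cannot be controlled by $M_{\loc}$, since no bound for $|a|$-type amplitudes is assumed.

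This is the main obstacle, and I would handle it through the vertical regularization. Because $g$ and $\partial_d g$ have $\xi_d$-support in $|\xi_d|\lesssim2^{m_0}$, I write each as $h*_{x_d}\rho_{m_0}$ with $\rho_{m_0}(s)=2^{m_0}\rho(2^{m_0}s)$ Schwartz, where $h=g$ or $h=2^{-m_0}\partial_d g$ up to harmless multipliers. The surface average of $h*_d\rho_{m_0}$ is then an average of $h$ against the measure smeared vertically. This is a bounded density of height $\lesssim2^{m_0}$ (with Schwartz tails) on a slab of dilated size $\sim1$ in every coordinate, uniformly in $t\in[1,2]^d$. Consequently it is dominated pointwise by $2^{m_0}$ times the strong maximal function $M_S h$, which is bounded on $L^p$ for $1<p<\infty$. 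Collecting factors, the first term costs $2^{-k}2^{m_0}$ and the second costs $2^{-k}\cdot2^{m_0}\cdot2^{m_0}$. Hence
\[
 \|M_{\loc}\mathrm{II}\|_p\lesssim 2^{2m_0-k}\|f\|_p .
\]

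\emph{Optimization.} Combining the two pieces gives $\|M_{\loc}P_k^{(i)}f\|_p\lesssim(2^{-\epsilon m_0}+2^{2m_0-k})\|f\|_p$. Choosing $m_0=k/(2+\epsilon)$ (rounded) balances the terms and yields $2^{-\epsilon k/(2+\epsilon)}=2^{-\gamma k}$, which is \eqref{eq:conclusion}. The constants depend only on $C_0,\epsilon,p,d$, the cutoffs, and $\|a\|_{C^1}$, $\|\phi\|_{C^1}$ near $\supp a$, as claimed.
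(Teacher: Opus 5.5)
Your proposal is correct and follows essentially the paper's argument: you split the vertical frequency at $m_0\approx k/(2+\epsilon)$, use the hypothesis above $m_0$, and below $m_0$ gain $2^{2m_0-k}$ from one tangential derivative combined with vertical regularization at scale $2^{-m_0}$. The supremum over $t$ of that regularization costs $2^{m_0}$, and $2^{2m_0}$ for the $\partial_d$ term. The differences are cosmetic. The paper differentiates the vertically regularized kernel $K_{t,m}$ and uses the vanishing mean of the kernel of $P_k^{(i)}$ for each dyadic block. You instead move the derivative onto $f$ via $\psi(\eta)=i\eta\cdot\psi(\eta)/(i\eta)$, integrate by parts along the surface, and treat all low vertical frequencies together. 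Your two resulting terms are exactly the paper's $(\partial_i a)\kappa_m$ and $a\,\partial_i\phi\,\kappa_m'$ terms.
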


To prove Lemma~\ref{lem:global-decay}, we need estimates for the
interaction between frequency localizations in two different
coordinates.

\begin{lemma}\label{lem:rough}
For $i<d$ and $k,m\ge1$,
\begin{align}
 \|M_{\loc}P_k^{(i)}P_m^{(d)}f\|_p
 &\le
 C2^{2m-k}\|f\|_p,
 \label{eq:rough}\\
 \|M_{\loc}P_k^{(i)}L_0^{(d)}f\|_p
 &\le
 C2^{-k}\|f\|_p.
 \label{eq:low}
\end{align}
These estimates hold for every $1\le p\le\infty$, without assuming
\eqref{eq:assumption}.
\end{lemma}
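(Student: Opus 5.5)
The plan is to dominate the maximal function by a kernel estimate that is uniform in $t$. Since $P_k^{(i)}$ localizes to $|\xi_i|\sim 2^k$, we write it as $P_k^{(i)}=2^{-k}\,\partial_i\,\widetilde P_k^{(i)}$. Here $\widetilde P_k^{(i)}$ has multiplier $\widetilde\psi(2^{-k}\xi_i)$ with $\widetilde\psi(\eta)=\psi(\eta)/(i\eta)$, so it is a one-dimensional convolution operator in $x_i$ with kernel of $L^1$ norm $O(1)$, uniformly in $k$. We then move the derivative $\partial_i$ onto the averaging operator by integrating by parts in the tangential variable $y_i$; this is possible because $i<d$. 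The derivative hits the $y_i$-slot of $f$ with coefficient $-t_i$, and $t_i\in[1,2]$.

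Concretely, set $g=\widetilde P_k^{(i)}P_m^{(d)}f$, or $g=\widetilde P_k^{(i)}L_0^{(d)}f$ for \eqref{eq:low}. The chain rule gives
\[
\partial_{y_i}\bigl[g(x-\mathrm D_t(y,\phi(y)))\bigr]=-t_i(\partial_ig)(\cdots)-t_d\,\partial_i\phi(y)\,(\partial_dg)(\cdots).
\]
Solving for $(\partial_i g)(\cdots)$ and integrating against $a(y)$, we integrate by parts in $y_i$; there are no boundary terms since $a\in C_c^1$. This yields
\[
A(\partial_ig)(x,t)=t_i^{-1}\int g(\cdots)\,\partial_ia(y)\dd y-\frac{t_d}{t_i}\int(\partial_dg)(\cdots)\,\partial_i\phi(y)a(y)\dd y .
\]
Each term is an average of $g$ or $\partial_dg$ against a bounded compactly supported density in $y$, with coefficients bounded for $t\in[1,2]^d$. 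Only $C^1$ bounds on $a,\phi$ are used.

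For each such term we need a crude maximal bound of the form $\sup_t|\int h(x-\mathrm D_t(y,\phi(y)))b(y)\dd y|\lesssim\|h\|_p$-type control for frequency-localized $h$. This is the main point to handle, since a surface average alone is not bounded pointwise by an $L^p$ norm. The plan is to use the vertical regularization. Because $g$ is localized to $|\xi_d|\lesssim 2^{m+1}$, we may write $h=h*_d\Phi_{2^{-m}}$, where $\Phi_{2^{-m}}$ is a Schwartz bump of scale $2^{-m}$ in $x_d$. Then the surface average becomes an average over the $2^{-m}$-thickened graph, with density $\lesssim 2^{m}$ times a rapidly decaying factor in the normal direction. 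Its kernel is thus an $L^1$-normalized function adapted to a slab, and uniformly in $t\in[1,2]^d$ it is bounded by $2^m$ times an integrable profile centered on the dilated graph patch. We dominate the sup over $t$ by a fixed integrable majorant: the union over $t$ of the dilated patches lies in a bounded set, so the majorant is $\lesssim 2^m\chi_{B(0,C)}$ plus Schwartz tails. This gives an $L^p\to L^p$ bound $\lesssim 2^m$ for all $1\le p\le\infty$.

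We then assemble the factors. The term with $\partial_dg$ costs an extra $2^{m}$ from Bernstein in $x_d$, which together with the $2^m$ above gives $2^{2m}$, while the $g$ term costs only $2^m$. The prefactor $2^{-k}$ yields \eqref{eq:rough}. For \eqref{eq:low}, the same argument with $m$ replaced by $0$ gives $2^{-k}$. We also use that the $x_i$-convolution $\widetilde P_k^{(i)}$ and the $x_d$-multipliers are bounded on $L^p$ for all $1\le p\le\infty$, since their kernels are $L^1$ in one variable. The step requiring most care is the uniform-in-$t$ majorant with Schwartz tails, which must be checked so that the sup over $t$ is absorbed by a single integrable kernel.
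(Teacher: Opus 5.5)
Your proposal is correct and is essentially the paper's argument written on the operator side rather than the kernel side. The paper differentiates the vertically regularized kernel $K_{t,m}$ in $y_i$, which produces exactly your two terms $(\partial_i a)\kappa_m$ and $t_d\,a\,(\partial_i\phi)\kappa_m'$, the latter costing the extra $2^m$. It extracts $2^{-k}$ from the vanishing mean and first moment of $\kappa_k$, where you use $P_k^{(i)}=2^{-k}\partial_i\widetilde P_k^{(i)}$ followed by integration by parts. In both versions the supremum in $t$ is absorbed by a single $t$-independent integrable envelope (bounded horizontal support, Schwartz tails in $z_d$), and Young's inequality gives the bounds for every $1\le p\le\infty$.
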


\begin{proof}
Choose a compactly supported $C^1$ extension of $\phi$ that agrees with
the original function on a neighborhood of $\supp a$, and extend $a$ by
zero. Such an extension of $\phi$ is obtained by multiplying by a smooth
cutoff supported in $U$ and equal to one near $\supp a$.
We may therefore regard both functions as defined on $\R^{d-1}$.

Write $\kappa=\check\psi$, $\lambda=\check\psi_0$, and
$\kappa_m(u)=2^m\kappa(2^m u)$. A change of variables in the graph
integral shows that 
\begin{align}
	A P_m^{(d)}f (x,t)
	={}&\int_{\mathbb R^{d-1}}\int_{\mathbb R}
	f(x-(y,s))K_{t,m}(y,s)\dd s\dd y,\nonumber\\
	K_{t,m}(y,s)
	={}&\Big(\prod_{i=1}^{d-1} t_i\Big)^{-1}
	a(y_1/t_1,\dots,y_{d-1}/t_{d-1})
	\nonumber\\
	&\times 2^m\check\psi\left(
	2^m\left(
	s-t_d\phi(y_1/t_1,\dots,y_{d-1}/t_{d-1})
	\right)\right).
\end{align}
For notational simplicity, set
\[
	u=(y_1/t_1,\ldots,y_{d-1}/t_{d-1}),
	\qquad
	v=s-t_d\phi(u).
\]

Here the vertical projection acts at the fixed physical frequency $2^m$;
in particular, there is no additional factor $t_d^{-1}$ in this formula.

For $i<d$, we have
\begin{equation}\label{eq:derivative}
	\partial_{y_i}K_{t,m}(y,s)
	=
	\Big(\prod_{j=1}^{d-1} t_j\Big)^{-1}t_i^{-1}
	\left[
	(\partial_i a)(u)\kappa_m(v)
	-t_da(u)(\partial_i\phi)(u)\kappa_m'(v)
	\right].
\end{equation}

We next establish an integrable envelope uniform in $t$.
Choose $c\ge1$ so that $\supp a\subset[-c,c]^{d-1}$, and choose
$C\ge1$ so that $|t_d\phi(u)|\le C$ whenever $u\in\supp a$
and $t_d\in[1,2]$. Since $\kappa$ is a Schwartz function,
\[
 |\kappa_m(v)|+|\kappa_m'(v)|
 \le C_N2^{2m}(1+2^m|v|)^{-N},
 \qquad m\ge1.
\]
If $|s|>2C$, then
$|v|=|s-t_d\phi(u)|\ge|s|/2$ on the relevant support.
If $|s|\le2C$, we simply use the bound $C2^{2m}$.
Since $\supp\partial_i a\subset\supp a$, the equality
\eqref{eq:derivative} consequently implies
\[
	F_m(z):=\sup_{t\in[1,2]^d}|\partial_{y_i}K_{t,m}(y,s)|
	\le
	C_N2^{2m}\mathbf 1_{[-2c,2c]^{d-1}}(y)(1+|s|)^{-N}.
\]
Taking $N>1$ yields
\begin{equation}
 \left\|
 \sup_{t\in[1,2]^d}|\partial_{y_i}K_{t,m}|
 \right\|_{L^1(\R^d)}
 \lesssim 2^{2m}.
\end{equation}

Let $\kappa_k(h)=2^k\kappa(2^kh)$ be the one-dimensional kernel of
$P_k^{(i)}$. Since $\psi(0)=0$,
\[
 \int_\R \kappa_k(h)\dd h=0,
 \qquad
 \int_\R |h|\,|\kappa_k(h)|\dd h
 =
 2^{-k}\int_\R |u|\,|\kappa(u)|\dd u
 \le C2^{-k}.
\]
The kernel $H_{t,k,m}$ of $A_tP_k^{(i)}P_m^{(d)}$ therefore satisfies
\begin{align*}
	H_{t,k,m}(z)
	&=\int_\R
	\kappa_k(h)\bigl(
	K_{t,m}((y,s)-he_i)-K_{t,m}(y,s)
	\bigr)\dd h\\
	&=-\int_\R h\kappa_k(h)
	\int_0^1
	\partial_{y_i}K_{t,m}((y,s)-\theta he_i)
	\dd\theta\dd h.
\end{align*}
By taking the supremum in $t$ and integrating in $(y,s)$, we obtain
\begin{align*}
	\left\|\sup_t|H_{t,k,m}|\right\|_1
	&\le
	\int_\R |h|\,|\kappa_k(h)|
	\int_0^1
	\left\|
	\sup_{t\in[1,2]^d}
	|\partial_{y_i}K_{t,m}(\,\cdot-\theta he_i)|
	\right\|_1
	\dd\theta\dd h
	\notag\\
	&\le C2^{2m-k}.
\end{align*}
Pointwise,
\[
 M_\loc P_k^{(i)}P_m^{(d)}f(x)
 \le
 \bigl(|f|*\sup_t|H_{t,k,m}|\bigr)(x).
\]
Young's inequality proves \eqref{eq:rough} for all
$1\le p\le\infty$.

For \eqref{eq:low}, replace $\kappa_m$ in the argument above by
$\lambda=\check\psi_0$. The same argument gives
\[
	\left\|
	\sup_{t\in[1,2]^d}|\partial_{y_i}K_{t,0}|
	\right\|_1
	\le C,
\]
and hence the bound $C2^{-k}$.
\end{proof}

Now we prove Lemma~\ref{lem:global-decay}.

\begin{proof}[Proof of Lemma~\ref{lem:global-decay}]
For $i=d$, the conclusion follows directly from \eqref{eq:assumption},
since $\gamma<\epsilon$. Fix $i<d$. The coordinate multipliers commute,
and $\|P_k^{(i)}\|_{p\to p}\le\|\kappa\|_1$ uniformly in $k$.
Applying \eqref{eq:assumption} to $P_k^{(i)}f$ gives
\begin{equation}
 \|M_\loc P_k^{(i)}P_m^{(d)}f\|_p
 \le C2^{-\epsilon m}\|f\|_p.
\end{equation}
Combining this with Lemma~\ref{lem:rough}, we obtain
\[
 \|M_\loc P_k^{(i)}P_m^{(d)}f\|_p
 \le C\min\{2^{2m-k},2^{-\epsilon m}\}\|f\|_p.
\]

Let $L\ge0$ be an integer. Decompose in the $d$-th frequency
coordinate using \eqref{eq:partition}, and use the sublinearity of
$M_\loc$:
\begin{align*}
 \|M_\loc P_k^{(i)}f\|_p
 &\le
 \|M_\loc P_k^{(i)}L_0^{(d)}f\|_p
 +\sum_{1\le m\le L}
 \|M_\loc P_k^{(i)}P_m^{(d)}f\|_p\\
 &\hspace{2cm}
 +\sum_{m>L}
 \|M_\loc P_k^{(i)}P_m^{(d)}f\|_p\\
 &\le
 C\left(
 2^{-k}
 +\sum_{1\le m\le L}2^{2m-k}
 +\sum_{m>L}2^{-\epsilon m}
 \right)\|f\|_p\\
 &\le
 C_\epsilon
 \left(
 2^{-k}+2^{2L-k}+2^{-\epsilon L}
 \right)\|f\|_p.
\end{align*}
Choose
\[
 L=\left\lfloor\frac{k}{2+\epsilon}\right\rfloor,
 \qquad
 \gamma=\frac{\epsilon}{2+\epsilon}.
\]
Then $2L-k\le-\gamma k$, while
$L\ge k/(2+\epsilon)-1$ gives
$-\epsilon L\le-\gamma k+\epsilon$. Since $\gamma<1$,
$2^{-k}\le2^{-\gamma k}$. Hence
\[
 \|M_\loc P_k^{(i)}f\|_p
 \le C_\epsilon2^{-\gamma k}\|f\|_p,
\]
which proves \eqref{eq:conclusion}.
\end{proof}

\section{Frequency-localized estimates}
\label{sec:frequency-localized}

We begin with the dyadic surface decomposition and state the normalized
analytic inputs. We then recover estimates for the original patches.
After a common low-frequency bound, we derive the local theorems
separately in dimensions $d\ge4$ and $d=3$.

\subsection{Dyadic patches and normalized estimates}

We decompose $S_r$ into finitely many coordinate graph patches using
the restrictions of smooth ambient cutoffs. On each patch, one coordinate is bounded away from
zero; after permuting and reflecting the coordinates, the patch may be
written as
\[
 y=(v,\Phi(v)),\qquad
 \Phi(v)=\left(1-\sum_{i=1}^{d-1}|v_i|^r\right)^{1/r},
\]
with $\Phi$ bounded below on the support of the corresponding cutoff.
A patchwise piece of $A$ therefore has the form
\[
 A_{\chi} f(x,t)
 =
 \int_{\R^{d-1}}
 f\left(
 x_1-t_1v_1,\ldots,x_{d-1}-t_{d-1}v_{d-1},
 x_d-t_d\Phi(v)
 \right)\chi(v)\dd v.
\]
Here $\chi$ includes the partition cutoff and the Euclidean surface
density. For general real $r$, the graph parametrization and $\chi$ need not
be smooth across coordinate hyperplanes. The uniform smoothness
needed below is obtained after dyadic rescaling within each orthant.
Since the full maximal operator is pointwise dominated by the sum of
the finitely many patchwise maximal operators, it is enough to prove
the required estimates on each such patch.

Fix one of these graph patches and, throughout this section, denote its
localized averaging operator by $A$. Let $n=d-1$. After a finite
decomposition into orthants, we may work in the positive orthant and
choose $\zeta\in C_c^\infty([1/2,2])$ such that
\[
 \sum_{m\ge0}\zeta(2^m v_i)=1
\]
on the support under consideration. For
$\mathbf m=(m_1,\ldots,m_n)\in\mathbb Z_{\ge0}^n$, write
\[
 A f(x,t)
 =
 \sum_{\mathbf m\in\mathbb Z_{\ge0}^n}A^{\mathbf m}f(x,t),
\]
where
\[
 A^{\mathbf m}f(x,t)
 =
 \int_{\mathbb R^n}
 f\left(x-\mathrm D_t(v,\Phi(v))\right)
 \chi(v)\prod_{i=1}^n\zeta(2^{m_i}v_i)\,dv.
\]

Set
\[
 \delta_i=2^{-m_i},
 \qquad
 V_\delta=\prod_{i=1}^n\delta_i,
 \qquad
 \Phi_\delta(v)
 =
 \left(1-\sum_{i=1}^n(\delta_i v_i)^r\right)^{1/r}.
\]
By the change of variables $v_i=\delta_i w_i$, followed by relabeling
$w$ as $v$, we have
\[
 A^{\mathbf m}f(x,t)
 =
 V_\delta
 \int_{\mathbb R^n}
 f\left(
 x-
 (t_1\delta_1v_1,\ldots,t_n\delta_nv_n,
 t_d\Phi_\delta(v))
 \right)
 B_\delta(v)\,dv,
\]
where
\[
 B_\delta(v)
 =
 \chi(\delta_1v_1,\ldots,\delta_nv_n)
 \prod_{i=1}^n\zeta(v_i).
\]
Thus $B_\delta$ is supported in the fixed box $1/2<v_i<2$, and, by
the choice of the graph patch, $\Phi_\delta$ is bounded below by a
fixed positive constant on a slightly enlarged support of $B_\delta$.

Let
\[
 M_{\loc}^{\mathbf m}f(x)
 =
 \sup_{t\in[1,2]^d}|A^{\mathbf m}f(x,t)|
\]
and set
\[
 \lambda_i=\max\{1,R\delta_i^r\}.
\]
For $R\ge2$, let $P_R^{(d)}$ denote the multiplier
$\psi(\xi_d/R)$. When summing the high-frequency pieces, we take
$R=2^j$, $j\ge1$.

To normalize the horizontal translation scales, define
\[
 A_\delta f(x,t)
 =
 V_\delta\int_{\mathbb R^n}
 f(x-\mathrm D_t(v,\Phi_\delta(v)))B_\delta(v)\,dv,
 \qquad
 M_{\loc}^\delta f(x)
 =
 \sup_{t\in[1,2]^d}|A_\delta f(x,t)|.
\]
Let
\[
 E_\delta=\diag(\delta_1,\ldots,\delta_n,1).
\]
Then
\[
 A^{\mathbf m}f(E_\delta x,t)
 =
 A_\delta(f\circ E_\delta)(x,t),
\]
and hence
\begin{equation}\label{eq:exact-spatial-rescaling}
 M_{\loc}^{\mathbf m}f(E_\delta x)
 =
 M_{\loc}^\delta(f\circ E_\delta)(x).
\end{equation}
Since the last coordinate is unchanged by $E_\delta$,
$P_R^{(d)}$ commutes with this rescaling.

For a positive integer $M$, put
\[
 \mathfrak B_M
 =
 \max_{|\alpha|\le M}
 \|\partial_v^\alpha B_\delta\|_\infty,
 \qquad
 I=\{i:R\delta_i^r\ge1\},
\]
and set
\begin{align}
 b_{\delta,4}(R)
 &=
 R^{1/2}V_\delta
 \prod_{i=1}^n\lambda_i^{-1/4},
 \\
 b_{\delta,2d}(R)
 &=
 R^{(d-1)/d}V_\delta
 \prod_{i=1}^n\lambda_i^{-1/d}.
\end{align}

The main analytic inputs are the following $L^2$-improving
estimates, with their indicated dimension restrictions.

\begin{lemma}\label{lem:normalized-improving}
Let $d\ge3$. There is a finite $M=M(d,r)$ such that, uniformly in
$\delta$ and $R\ge2$,
\begin{equation}\label{eq:normalized-L4-bound}
 \|M_{\loc}^\delta P_R^{(d)}f\|_4
 \lesssim \mathfrak B_M b_{\delta,4}(R)\|f\|_2.
\end{equation}
If $d\ge4$, then
\begin{equation}\label{eq:normalized-L2d-bound}
 \|M_{\loc}^\delta P_R^{(d)}f\|_{2d}
 \lesssim \mathfrak B_M b_{\delta,2d}(R)\|f\|_2.
\end{equation}
If $d=3$, then
\begin{equation}\label{eq:normalized-L12-bound}
 \|M_{\loc}^\delta P_R^{(3)}f\|_{12}
 \lesssim \mathfrak B_M R^{3/4}V_\delta
 (\lambda_1\lambda_2)^{-1/4}\|f\|_2.
\end{equation}
\end{lemma}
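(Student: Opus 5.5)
The plan is to pass to the Fourier side, factor the multiplier, linearize the supremum, and then run a $TT^*$ argument. Write the kernel of $A_\delta(\cdot,t)P_R^{(d)}$ as a Fourier multiplier,
\[
 m_t(\xi)=V_\delta\psi(\xi_d/R)\int_{\R^n}e^{-i\langle \mathrm D_t(v,\Phi_\delta(v)),\xi\rangle}B_\delta(v)\,dv .
\]
The integral is non-stationary unless $|\xi_i|\lesssim R\delta_i$ relative to $\xi_d\asymp R$. On the non-stationary part we integrate by parts using $M$ derivatives of $B_\delta$. This produces rapid decay and a harmless contribution, controlled by Sobolev embedding in $t$ and the factor $\mathfrak B_M$.

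On the stationary part we use iterated stationary phase, one variable $v_i$ at a time. The phase in $v_i$ is $t_i\xi_i v_i+t_d\xi_d(\delta_i v_i)^r/r+\dots$. In coordinates with $\lambda_i=R\delta_i^r\ge1$ this has a nondegenerate critical point, and we gain $\lambda_i^{-1/2}$. In coordinates with $\lambda_i=1$ the phase is essentially linear, and we keep a smooth bounded symbol. The result should be an exact factorization
\[
 m_t(\xi)=V_\delta\prod_{i\in I}\lambda_i^{-1/2}\,e^{-i t_d\xi_d\,\mathcal H(\tau)}\,a_t(\xi),
\]
with $\tau_i=t_i\xi_i/(t_d\xi_d)$, where $\mathcal H$ is the Legendre-dual phase built from $h$, namely powers $|\cdot|^{r'}$ in the variables $i\in I$. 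The symbol $a_t$ obeys symbol bounds, uniform in $\delta$, adapted to boxes of sides $R\lambda_i^{-1/2}\delta_i\cdot\lambda_i^{?}$. We then linearize $t=t(x)$. Derivatives in $t$ cost at most one power of the frequency scale per variable, so a Sobolev embedding in the $d$ parameters reduces matters to fixed-$t$ type bounds with losses compensated below. Next we apply scalar Fourier inversion in the $\xi_d$-dependence, as in the displayed formula $e^{-iR\sqrt z}=\int K_R(s)e^{isz}ds$ adapted to $h$. This writes the phase as an integral over $s$ of separated phases $\sum_i c_i(s)x$-dependent terms times $|\xi_i|^{r'}$ (quadratic when $r=2$). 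The $L^1$ or $L^2$ norm of $K_R$ in $s$ is controlled by a power of $R$.

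For \eqref{eq:normalized-L4-bound} we take the $L^2$ norm in $s$ (Cauchy–Schwarz) and apply $TT^*$ to the resulting operator $f\mapsto\int e^{i(x\cdot\xi-t(x)\cdot\text{phase})}a\hat f$. The kernel of $TT^*$ factors into a product of one-dimensional oscillatory integrals in $\xi_i$. Each factor is bounded by $\min\{R\delta_i,(|x_i-x_i'|\lambda_i/(R\delta_i)^2)^{-1/2}\}$ in the curved directions. Hardy–Littlewood–Sobolev, or Young with weak $L^2$ in each variable, then gives $L^{4/3}\to L^4$ for $TT^*$. Bookkeeping the product should produce exactly $R^{1/2}V_\delta\prod\lambda_i^{-1/4}$. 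For \eqref{eq:normalized-L2d-bound} ($d\ge4$) we instead keep $s$ inside the $TT^*$. The kernel then acquires oscillation in all $d$ variables, including $x_d$ through the scalar phase $s z$ and the square-root (or $h$) structure. We aim for a pointwise kernel bound $\lesssim R^{?}\prod_i\lambda_i^{?}\,|x-x'|^{-(d-1)/... }$ of dispersive type, which yields $L^{(2d)'}\to L^{2d}$ by HLS. For \eqref{eq:normalized-L12-bound} ($d=3$) we interpolate the $L^4$ bound with the annular $L^6$ local smoothing estimate of Appendix~\ref{app:d3-smoothing}.

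The main obstacle will be the uniformity of the exact factorization in $\delta$: proving that $a_t$ satisfies symbol bounds independent of the mixed scales $\lambda_i$, given the coupling between coordinates through $\Phi_\delta$. The other hard step is the full-dimensional $TT^*$ kernel estimate for $L^{2d}$, where the $s$-integration must be combined with decay in every coordinate. For the former I would first handle the coordinates one at a time, using that $\Phi_\delta$ is bounded below, so each step is a perturbation of a one-dimensional power phase.
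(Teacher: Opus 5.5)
There is a genuine gap: as written, your outline does not yield any of the three bounds. The clearest failure is the $d=3$ estimate \eqref{eq:normalized-L12-bound}. Interpolating the $L^2\to L^4$ bound, i.e.\ $(1/p,1/q)=(1/2,1/4)$, with an $L^6\to L^6$ bound, i.e.\ $(1/6,1/6)$, only produces exponents on the segment joining these two points, and that segment never contains $(1/2,1/12)$. Moreover, \eqref{eq:d3-annular-L6} carries the loss $\delta_*^{-A}$, whereas \eqref{eq:normalized-L12-bound} must hold uniformly in $\delta$. In the paper the dependence actually runs the other way: \eqref{eq:dyadic-2-12} is an input that is combined with \eqref{eq:d3-annular-L6} in Proposition~\ref{prop:d3-decay}.

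The $L^{12}$ bound instead comes from the same full-spatial $TT^*$ kernel bound that gives $L^{2d}$, and that kernel bound is exactly what your plan leaves as ``we aim for''. Start from $e^{-ih_S}=R\int k_R(R\sigma)e^{iR\sigma Q_t(\eta)}\dd\sigma$. Each one-dimensional factor $J_j$ with $j\in S\cup\{d\}$ then satisfies two bounds:
the uniform bound $(1+\omega_j|h_j|)^{-1/2}$, from Lemma~\ref{lem:mixed-linear-power};
and, after the substitution $v=|\eta_j|^{r'}$, the Plancherel bound $\|J_j\|_{L^2_\sigma}\lesssim\omega_j^{-1/2}$.
One also has $R^2|k_R(R\sigma)k_R(R\rho)|\lesssim R(1+|\sigma|)^{-2}(1+|\rho|)^{-2}$ and $|S\cup\{d\}|\le d$. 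So H\"older in $\sigma$ with exponent $d$ gives $\omega_j^{-1/d}(1+\omega_j|h_j|)^{-(d-2)/(2d)}$ for each such factor. For $d\ge4$ one has $(d-2)/(2d)\ge1/d$, which yields the product kernel $\prod_{j=1}^d|h_j|^{-1/d}$. For $d=3$ one gets $\prod_j|h_j|^{-1/6}$. Iterated one-dimensional HLS then gives $L^{(2d)'}\to L^{2d}$ and $L^{12/11}\to L^{12}$. An isotropic dispersive bound $|x-x'|^{-\alpha}$ would not respect the anisotropic scales $\omega_j$.

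For \eqref{eq:normalized-L4-bound}, the missing idea is fixing the height $x_d=c$. Suppose you apply Cauchy--Schwarz in $s$ and then run $TT^*$ in all of $x$. The $\eta_d$-factor of the kernel is $\int e^{i[R(x_d-z_d)\eta_d+(s-u)|\eta_d|^{r'}]}|\psi(\eta_d)|^2\cdots\dd\eta_d$. It is stationary where $|s-u|\sim R|x_d-z_d|$, so it has no integrable decay in $s-u$, and Young's inequality in $s$ fails. Your one-dimensional bounds cover only the tangential variables. If you instead keep $\sigma$ inside the $TT^*$ and use only the uniform bounds, you lose $R^{1/4}$.

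The paper's route is as follows.
\begin{enumerate}
\item Fix $x_d=c$ and the selector $t(x',c)$. The factors $e^{iRc\eta_d}$ then cancel in $\mathcal T_c\mathcal T_c^*$.
\item The remaining $\eta_d$-integral $\int e^{i(s-u)|\eta_d|^{r'}}\cdots$ is nonstationary, so it gives $(1+|s-u|)^{-N}$.
\item Product HLS in $x'$ and Young in $s$ give $\|\mathcal T_cg\|_{L^4_{x'}L^2_s}\lesssim\prod_i\omega_i^{-1/4}\|g\|_2$. This uses $\|k_R\|_2\lesssim1$; a loose ``power of $R$'' bound is not enough.
\item The full $L^4$ norm is recovered by localizing the input to $|y_d-c|\le H_1$, using $\int_\R\|f_c\|_2^4\dd c\lesssim\|f\|_2^4$, and absorbing the Schwartz tail.
\end{enumerate}

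Two further corrections. First, drop the Sobolev embedding in $t$. It would cost roughly $(R\prod_i\omega_i)^{1/q}$, which nothing compensates, and it is unnecessary, since every step works with an arbitrary measurable selector. Second, the nonstationary sectors are not harmless on their own. A remote inactive coordinate gains only $2^{-k_iK}$, and the remaining coordinates may still be stationary. These sectors must be carried through the same $TT^*$, with factors $(1+\omega_i|h_i|)^{-N}$, and summed over $(S,k)$ as in \eqref{eq:sector-sum-L4}.
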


Spatial localization in Section~\ref{sec:normalized-L2} also gives
the companion $L^2$ estimate
\begin{equation}\label{eq:normalized-L2-bound}
 \|M_{\loc}^\delta P_R^{(d)}f\|_2
 \lesssim
 \mathfrak B_M b_{\delta,4}(R)\|f\|_2.
\end{equation}

Lemma~\ref{lem:normalized-improving} and the companion estimate are
proved in Section~\ref{sec:proof-local-patch}. They yield the
following bounds for the original dyadic patches.

\begin{proposition}\label{prop:dyadic-patch-bounds}
Let $d\ge3$. Uniformly in $\mathbf m$ and $R\ge2$,
\begin{align}
 \|M_{\loc}^{\mathbf m}P_R^{(d)}f\|_2
 &\lesssim R^{1/2}V_\delta
 \prod_{i=1}^n\lambda_i^{-1/4}\|f\|_2,
 \label{eq:dyadic-22}\\
 \|M_{\loc}^{\mathbf m}P_R^{(d)}f\|_4
 &\lesssim R^{1/2}V_\delta^{3/4}
 \prod_{i=1}^n\lambda_i^{-1/4}\|f\|_2.
 \label{eq:dyadic-24}
\end{align}
If $d\ge4$, we also have
\begin{equation}
 \|M_{\loc}^{\mathbf m}P_R^{(d)}f\|_{2d}
 \lesssim R^{(d-1)/d}V_\delta^{(d+1)/(2d)}
 \prod_{i=1}^n\lambda_i^{-1/d}\|f\|_2.
\end{equation}
For $d=3$, the additional estimate is
\begin{equation}\label{eq:dyadic-2-12}
 \|M_{\loc}^{\mathbf m}P_R^{(3)}f\|_{12}
 \lesssim R^{3/4}V_\delta^{7/12}
 (\lambda_1\lambda_2)^{-1/4}\|f\|_2.
\end{equation}
Moreover, for every $d\ge3$,
\begin{align}
 \|M_{\loc}^{\mathbf m}P_R^{(d)}f\|_\infty
 &\lesssim V_\delta\|f\|_\infty,\\
 \|M_{\loc}^{\mathbf m}P_R^{(d)}f\|_1
 &\lesssim RV_\delta\|f\|_1,\\
 \|M_{\loc}^{\mathbf m}P_R^{(d)}f\|_\infty
 &\lesssim R\|f\|_1.
\end{align}
\end{proposition}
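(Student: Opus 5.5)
The plan is to deduce the first four bounds from Lemma~\ref{lem:normalized-improving} and the companion estimate \eqref{eq:normalized-L2-bound} via the exact rescaling identity \eqref{eq:exact-spatial-rescaling}. The last three bounds will come from direct kernel estimates.

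\emph{Rescaling.} Put $g=f\circ E_\delta$, so that $\|g\|_p=V_\delta^{-1/p}\|f\|_p$. Since $P_R^{(d)}$ commutes with $E_\delta$, we have
\[
 M_{\loc}^{\mathbf m}P_R^{(d)}f(E_\delta x)=M_{\loc}^\delta P_R^{(d)}g(x),
\]
and therefore
\[
 \|M_{\loc}^{\mathbf m}P_R^{(d)}f\|_q=V_\delta^{1/q}\|M_{\loc}^\delta P_R^{(d)}g\|_q .
\]
Inserting the normalized bound with $\|g\|_2=V_\delta^{-1/2}\|f\|_2$ produces the extra factor $V_\delta^{1/q-1/2}$. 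This gives the following exponents of $V_\delta$:
\begin{itemize}
 \item $q=2$: exponent $1$;
 \item $q=4$: $1-1/4=3/4$;
 \item $q=2d$: $1+1/(2d)-1/2=(d+1)/(2d)$;
 \item $q=12$ with $d=3$: $1+1/12-1/2=7/12$.
\end{itemize}
These match the claimed exponents. We also need $\mathfrak B_M\lesssim1$ uniformly in $\delta$. Since $\delta_i\le1$, derivatives of $\chi(\delta_1v_1,\dots,\delta_nv_n)$ only gain powers of $\delta_i$. So it suffices that $\chi$ has bounded derivatives on the orthant piece under consideration, which holds because $\chi$ is a smooth ambient cutoff times the surface density. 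The density $(1+|\nabla\Phi|^2)^{1/2}$ involves $|v_i|^{r-1}$. After scaling it becomes $\delta_i^{r-1}w_i^{r-1}$ with $w_i\in[1/2,2]$, which is smooth with bounds uniform in $\delta$ since $r\ge2$. I expect this uniformity check on the density and on $\Phi_\delta$ to be the only delicate point. It should be verified directly in the scaled variables rather than through $\chi$ itself.

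\emph{Trivial bounds.} The kernel of $P_R^{(d)}$ is $R\check\psi(R\,\cdot)$ in $x_d$, with uniformly bounded $L^1$ norm. Hence
\[
 |A^{\mathbf m}P_R^{(d)}f|\le\|\check\psi\|_1\|f\|_\infty\int\chi\prod_i\zeta(2^{m_i}v_i)\,dv\lesssim V_\delta\|f\|_\infty .
\]
For the two $L^1$-type bounds, write the operator as convolution with the kernel $K_{t,R}$ computed in the proof of Lemma~\ref{lem:rough}, now with the amplitude $\chi\prod_i\zeta(2^{m_i}\cdot)$. The supremum over $t$ of $|K_{t,R}(y,s)|$ is at most
\[
 C_NR\,(1+R\,\mathrm{dist}(s,[c_1,c_2]))^{-N}
\]
times the indicator of the $y$-support, a box of volume $\lesssim V_\delta$. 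Here $[c_1,c_2]$ is the range of $t_d\Phi$, which lies in a compact set since $\Phi\le1$. Thus $\sup_t|K_{t,R}|\lesssim R$ pointwise, which gives $L^1\to L^\infty$ with bound $R$. Its $L^1$ norm is $\lesssim V_\delta\cdot R\cdot O(1)$, since the $s$-range has length $O(1)$. Young's inequality then gives $L^1\to L^1$ with bound $RV_\delta$, using the pointwise domination $M^{\mathbf m}_{\loc}P_R^{(d)}f\le|f|*\sup_t|K_{t,R}|$.
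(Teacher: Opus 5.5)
Your proposal is correct and follows the paper's proof essentially step by step. The $L^2$, $L^4$, $L^{2d}$ and $L^{12}$ bounds come from Lemma~\ref{lem:normalized-improving} and \eqref{eq:normalized-L2-bound} via \eqref{eq:exact-spatial-rescaling} and the factor $V_\delta^{1/q-1/2}$. Uniformity of $\mathfrak B_M$ is checked in the rescaled variables. The remaining three bounds come from the same $t$-uniform kernel envelope and Young's inequality; your direct $L^\infty$ argument is equivalent to the paper's fixed-$t$ bound $\|K^{\mathbf m}_{t,R}\|_1\lesssim V_\delta$.

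The only slip is your intermediate claim that $\chi$ itself has bounded derivatives on the orthant piece. This fails for non-integer $r$, because its high-order derivatives blow up as $v_i\to0^+$. The correct check is the one you then indicate and the paper carries out: work in the variables $v_i\in[1/2,2]$, where $\Phi_\delta$ is bounded below and the powers $\delta_i^r,\delta_i^{2r-2}\le1$ enter only as coefficients.
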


\begin{proof}[Proof of Proposition~\ref{prop:dyadic-patch-bounds}]
Since
\[
 \|f\circ E_\delta\|_2=V_\delta^{-1/2}\|f\|_2,
\]
the rescaling \eqref{eq:exact-spatial-rescaling} gives
\begin{align*}
 \|M_{\loc}^{\mathbf m}P_R^{(d)}f\|_2
 &\lesssim R^{1/2}V_\delta
 \prod_{i=1}^n\lambda_i^{-1/4}\|f\|_2,\\
 \|M_{\loc}^{\mathbf m}P_R^{(d)}f\|_4
 &\lesssim R^{1/2}V_\delta^{3/4}
 \prod_{i=1}^n\lambda_i^{-1/4}\|f\|_2
\end{align*}
for $d\ge3$, and
\[
 \|M_{\loc}^{\mathbf m}P_R^{(d)}f\|_{2d}
 \lesssim R^{(d-1)/d}V_\delta^{(d+1)/(2d)}
 \prod_{i=1}^n\lambda_i^{-1/d}\|f\|_2
 \quad(d\ge4).
\]
For $d=3$, the same change of variables gives
\[
 \|M_{\loc}^{\mathbf m}P_R^{(3)}f\|_{12}
 \lesssim R^{3/4}V_\delta^{7/12}
 (\lambda_1\lambda_2)^{-1/4}\|f\|_2,
\]
since the spatial norm contributes $V_\delta^{1/12-1/2}$.
To justify the uniform amplitude bounds for every real $r\ge2$, let
$\kappa$ be the smooth ambient cutoff defining the graph patch. On the
positive orthant the Euclidean surface density gives
\[
 B_\delta(v)=\kappa(\delta v,\Phi_\delta(v))
 \left(1+\Phi_\delta(v)^{2-2r}
       \sum_{i=1}^n\delta_i^{2r-2}v_i^{2r-2}\right)^{1/2}
 \prod_{i=1}^n\zeta(v_i).
\]
The cutoff is supported where the graph height has a fixed positive
lower bound, and the amplitude is extended by zero away from this
patch. On a fixed enlargement of its support, $v_i$ and the graph
height are bounded away from zero. All fixed-order derivatives of
$v_i^r$ and $v_i^{2r-2}$ are therefore bounded there. Since
$\delta_i^r,\delta_i^{2r-2}\le1$, the chain rule gives, for every
fixed $M$,
\[
 \sup_{\mathbf m\in\mathbb Z_{\ge0}^n}\|B_{2^{-\mathbf m}}\|_{C^M}
 \le C_{M,d,r,\kappa,\zeta}.
\]
Thus $\mathfrak B_M$ may be absorbed into the implicit constants.
The constants may depend on the fixed value of $r$.

The remaining three estimates follow from elementary kernel bounds.

The convolution kernel of
$A^{\mathbf m}P_R^{(d)}$ is
\begin{equation}\label{eq:elementary-patch-kernel}
 \begin{aligned}
 K_{t,R}^{\mathbf m}(z)
 &=
 \left(\prod_{i=1}^nt_i^{-1}\right)
 B_\delta\left(
 \frac{z_1}{t_1\delta_1},\ldots,
 \frac{z_n}{t_n\delta_n}
 \right)\\
 &\quad\times R\check\psi\left(
 R\left[
 z_d-t_d\Phi_\delta\left(
 \frac{z_1}{t_1\delta_1},\ldots,
 \frac{z_n}{t_n\delta_n}
 \right)
 \right]\right).
 \end{aligned}
\end{equation}
Here the factor $V_\delta$ in $A^{\mathbf m}$ is cancelled by the
horizontal Jacobian $\prod_i(t_i\delta_i)$.

Since $B_\delta$ is supported in a fixed box, the horizontal support
of \eqref{eq:elementary-patch-kernel} is contained in
$|z_i|\le C\delta_i$. Moreover,
$t_d\Phi_\delta$ ranges in a fixed bounded interval $\mathcal I$.
Hence, by the Schwartz decay of $\check\psi$, for every $N\ge1$,
\[
 \sup_{t\in[1,2]^d}|K_{t,R}^{\mathbf m}(z)|
 \lesssim_N
 R\,1_{\{|z_i|\le C\delta_i,\ 1\le i\le n\}}
 \left(1+R\,\operatorname{dist}(z_d,\mathcal I)\right)^{-N}.
\]
For $N>1$, this envelope has $L^\infty$ norm $O(R)$ and
$L^1$ norm $O(RV_\delta)$. Therefore
\[
 \|M_{\loc}^{\mathbf m}P_R^{(d)}f\|_1
 \lesssim RV_\delta\|f\|_1,
 \qquad
 \|M_{\loc}^{\mathbf m}P_R^{(d)}f\|_\infty
 \lesssim R\|f\|_1.
\]

For each fixed $t$, the changes of variables
$z_i=t_i\delta_i v_i$ and
$u=R[z_d-t_d\Phi_\delta(v)]$ give
$\|K_{t,R}^{\mathbf m}\|_1\lesssim V_\delta$ uniformly in $t$.
Consequently,
\[
 \|M_{\loc}^{\mathbf m}P_R^{(d)}f\|_\infty
 \lesssim V_\delta\|f\|_\infty.
\]
This proves the remaining estimates in
Proposition~\ref{prop:dyadic-patch-bounds}.
\end{proof}

\subsection{Low-frequency estimates}
\label{sec:common-low-frequency}

We treat the low frequency in the distinguished coordinate of the fixed
graph patch, in every dimension $d\ge3$. Recall that
\[
 \psi_0(\eta)+\sum_{j\ge1}\psi(2^{-j}\eta)=1,
\]
and let $P_{\le1}^{(d)}$ have multiplier $\psi_0(\xi_d)$.
Its kernel is
\[
 K_{t,0}(z)
 =
 \left(\prod_{i=1}^nt_i^{-1}\right)
 \chi(z'/t')
 \check\psi_0\left(
 z_d-t_d\Phi(z'/t')
 \right).
\]
Since $\chi$ has compact support and $\Phi$ is bounded on this patch,
for every $N$,
\[
 \sup_{t\in[1,2]^d}|K_{t,0}(z)|
 \lesssim_N
 {\bf1}_{\{|z'|\le C\}}(1+|z_d|)^{-N}.
\]
Thus this envelope belongs to $L^s(\mathbb R^d)$ for every
$1\le s\le\infty$. Choosing $s$ so that
$1+1/q=1/p+1/s$, Young's inequality gives
\[
 \|M_{\loc}P_{\le1}^{(d)}f\|_q
 \lesssim_{p,q}\|f\|_p.
\]

\subsection{Interpolation and summation in dimensions \texorpdfstring{$d\ge4$}{d >= 4}}

For $d\ge4$, we explain how the corresponding six
estimates in Proposition~\ref{prop:dyadic-patch-bounds} lead to
Theorem~\ref{thm:lplq}. Put
\[
 u=\frac1p,\qquad v=\frac1q.
\]
For an estimate of the form
\[
 \|M_{\loc}^{\mathbf m}P_R^{(d)}f\|_q
 \lesssim
 R^aV_\delta^{1-u+v}
 \prod_{i=1}^n\lambda_i^{-c}\|f\|_p,
\]
we record the four exponents $(u,v,a,c)$. The six estimates valid
for $d\ge4$ in
Proposition~\ref{prop:dyadic-patch-bounds} correspond to
\[
 \begin{aligned}
 & \  \   \  \   (1,1,1,0),\qquad \quad  \ \  (1,0,1,0),\qquad  \qquad \  (0,0,0,0),\\
 &\left(\frac12,\frac12,\frac12,\frac14\right),\qquad
 \left(\frac12,\frac14,\frac12,\frac14\right),\qquad
 \left(\frac12,\frac1{2d},\frac{d-1}{d},\frac1d\right).
 \end{aligned}
\]
Hence interpolation gives the preceding estimate whenever
$(u,v,a,c)$ belongs to the convex hull of these six points.

For $1-u+v>0$ and $c\ge0$, each coordinate satisfies
\[
 \sum_{m\ge0}
 2^{-m(1-u+v)}
 \max\{1,R2^{-mr}\}^{-c}
 \lesssim
 (1+\log R)
 R^{-\min\{c,(1-u+v)/r\}}.
\]
Therefore the dyadic patches and the high-frequency pieces are
summable provided that, for some admissible $(a,c)$,
\[
 a<(d-1)c,
 \qquad
 ra<(d-1)(1-u+v).
\]
To eliminate the auxiliary exponents, let $\theta_1,\ldots,\theta_6$
be the convex weights of the six
estimates in the displayed order, and put
$h=\theta_4+\theta_5$ and $z=\theta_6$. Then
\[
 a=u+\frac{d-2}{2d}z,\qquad c=\frac h4+\frac zd.
\]
For fixed $u,v,h,z$, nonnegative weights with sum one exist
precisely when
\[
 \begin{gathered}
 h,z\ge0,\qquad h+z\le2u,\qquad h+z\le2(1-u),\\
 \frac h4+\frac z{2d}\le v
 \le u-\frac{d-1}{2d}z.
 \end{gathered}
\]
Indeed, after $h,z$ are fixed, $\theta_1$ ranges from zero to
$u-(h+z)/2$, $\theta_4$ ranges from zero to $h$, and
$v=\theta_1+\theta_4/4+h/4+z/(2d)$.
The two strict summability conditions become
\[
 (d-1)h+2z>4u,\qquad
 ru+\frac{r(d-2)}{2d}z<(d-1)(1-u+v).
\]
Write $n=d-1$ and $L=2d(u-nv)$. Eliminating $h$ gives
\[
 \begin{gathered}
 0\le z,\qquad L<z,\qquad
 z\le\frac{2d(u-v)}n,\\
 z<\min\left\{
 2u,\ 2(1-u),\ 2dv,\
 \frac{2n-2(d+1)u}{d-3},\
 \frac{2d[n-(r+n)u+nv]}{r(d-2)}
 \right\}.
 \end{gathered}
\]
The positivity of the last two upper bounds gives
$u<n/(d+1)$ and $v>((r+n)u-n)/n$. Comparing $L$ with the
upper bounds gives the three restrictions
\[
 v>\frac ud,\qquad
 v>\frac{nu-1}{d(d-3)},\qquad
 v>\frac{(r+1)u-1}{r(d-2)+1},
\]
and the additional inequality
$v>((d+1)u-1)/(dn)$.
This last restriction is redundant: its right-hand side is at
most $u/d$ when $u\le1/2$, and at most
$(nu-1)/[d(d-3)]$ when $u\ge1/2$.
Conversely, these strict restrictions and $0\le v\le u$ make
all the strict upper bounds positive. If $L<0$, take $z=0$;
if $L\ge0$, take $z$ slightly larger than $L$ and below all
the upper bounds. The corresponding interval for $h$ is then
nonempty. Thus the elimination gives
\begin{equation}
 \begin{gathered}
 0\le v\le u<\frac{d-1}{d+1},\\
 v>
 \max\left\{
 \frac ud,\,
 \frac{(d-1)u-1}{d(d-3)},\,
 \frac{(r+1)u-1}{r(d-2)+1},\,
 \frac{(r+d-1)u-(d-1)}{d-1}
 \right\}.
 \end{gathered}
\end{equation}
Equivalently,
\begin{equation}
 \begin{gathered}
 p>\frac{d+1}{d-1},
 \qquad
 p\le q<dp,\\
 \frac{d-1}{p}-\frac{d(d-3)}{q}<1,\\
 \frac{r+1}{p}-\frac{r(d-2)+1}{q}<1,\\
 \frac{r+d-1}{p}-\frac{d-1}{q}<d-1.
 \end{gathered}
\end{equation}

Combining these high-frequency estimates with the low-frequency bound
in Section~\ref{sec:common-low-frequency} proves the local estimate
on the fixed graph patch. Summing over the finite family of graph
patches gives Theorem~\ref{thm:lplq}.

\subsection{Interpolation and summation in dimension three}

In dimension three, \eqref{eq:dyadic-22} is critical on annuli
bounded away from the coordinate axes. We supplement it with the following
estimate. Its proof, including the quantitative dependence on the annular
scales, is given in Appendix~\ref{app:d3-smoothing}.

\begin{proposition}\label{prop:d3-annular-smoothing}
Let $d=3$ and $2\le r<\infty$. For every $\varepsilon>0$ there is a
finite $A=A(r,\varepsilon)$ such that, with
$\delta_* =\min(\delta_1,\delta_2)$,
\begin{equation}\label{eq:d3-annular-L6}
 \|M_{\loc}^{\mathbf m}P_R^{(3)}f\|_6
 \lesssim_{r,\varepsilon}
 \delta_*^{-A}R^{-1/6+\varepsilon}\|f\|_6,
 \qquad R\ge2.
\end{equation}
The constant is uniform in $R$ and $\mathbf m$ for the fixed graph
patch. Here $P_R^{(3)}$ localizes only the third frequency coordinate.
\end{proposition}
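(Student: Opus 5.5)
We plan to prove Proposition~\ref{prop:d3-annular-smoothing} by reducing it to a quantitative multiparameter local smoothing estimate for the wave-type propagator $e^{it_3R h_\delta}$, where $h_\delta$ is the stationary-phase function of the rescaled patch. First, we apply the exact rescaling \eqref{eq:exact-spatial-rescaling} and pass to $M_{\loc}^\delta P_R^{(3)}$. The $L^6$ norms scale identically on both sides, so no power of $V_\delta$ appears. On the support of $B_\delta$ we have $v_i\in(1/2,2)$. Since $\delta_*$ is fixed, the Hessian of $\Phi_\delta$ is nondegenerate there, but its eigenvalues are of size comparable to $\delta_i^r$, which is bounded below by a power of $\delta_*$. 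All constants in the stationary phase analysis (curvature lower bounds and $C^M$ norms after normalizing) are therefore polynomial in $\delta_*^{-1}$. This is the source of the loss $\delta_*^{-A}$, and we will not track $A$ precisely.

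Second, we write the multiplier of $A_\delta(\cdot,t)P_R^{(3)}$ by stationary phase. On the frequency sector $|\xi_3|\sim R$, the phase $\xi\cdot \mathrm D_t(v,\Phi_\delta(v))$ has a nondegenerate critical point only in a conic sector where $\mathrm D_t\xi$ is normal to the surface. Outside a $\delta_*^{-C}$-enlargement of this sector, integration by parts gives $O(R^{-N}\delta_*^{-C})$ decay, and the maximal function is then controlled trivially. Inside the sector, the multiplier equals
\[
 e^{-i h_\delta(\mathrm D_t\xi)}\,a(t,\xi)
\]
plus lower-order terms, where $a$ is a symbol of order $-1$ (that is, $R^{-1}$ in $d=3$). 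Here $h_\delta$ is homogeneous of degree one, and its angular Hessian has rank $2$ with eigenvalues bounded below by $\delta_*^{C}$.

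Third, we reduce the supremum over $t\in[1,2]^3$ to multiparameter local smoothing by the standard Sobolev embedding in each $t_j$:
\[
 \sup_t|F(t)|\lesssim\sum_{\alpha\in\{0,1\}^3}\Bigl\|\partial_t^\alpha F\Bigr\|_{L^6_t}^{1-|\alpha|/6}\cdots,
\]
or more simply by $\|\sup_t|F|\|_6\lesssim \prod_j(\cdot)$ via $W^{1/6+\epsilon,6}$ in each variable. Each $t_j$-derivative costs a factor of $R$. It therefore suffices to prove
\[
 \Bigl\|\,e^{-ih_\delta(\mathrm D_t D)}a(t,D)P_R^{(3)}f\Bigr\|_{L^6(\R^3\times[1,2]^3)}\lesssim\delta_*^{-A}R^{-1-1/2+\varepsilon}\cdot R^{?}\|f\|_6.
\]
The bookkeeping goes as follows. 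The fixed-time $L^6$ bound of a Fourier integral operator of order $-1$ with rank-2 curvature is $R^{-1}R^{(d-1)(1/2-1/6)}=R^{-1/3}$. The required gain over this is $R^{-1/2+3(1/6)}\cdot R^{-1/6}$ after accounting for the three half-derivatives, which amounts to a local smoothing gain of $R^{-1/6}$ per parameter beyond the fixed-time estimate. We would obtain this by freezing $(t_1,t_2)$ and noting that the dependence on $t_3$ is a genuine cone-type evolution with cinematic curvature. The sharp $\ell^2$ decoupling of Bourgain--Demeter for cones in $\R^{2+1}$ then gives the $L^6$ local smoothing gain of $R^{-1/6+\varepsilon}$. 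For the variables $t_1,t_2$, the phase $h_\delta(\mathrm D_t\xi)$ depends smoothly on them with $R$-scale derivatives. We intend to handle them by fixing them on a $R^{-1}$-grid: by Bernstein, this costs only $R^{2/6}$, which is exactly balanced because the $\partial_{t_1},\partial_{t_2}$ Sobolev loss is then avoided. This grid bound is the point to be verified.

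The main obstacle is the third step: making the cinematic curvature condition for $t_3\mapsto h_\delta(t'\xi',t_3\xi_3)$ uniform, with constants polynomial in $\delta_*^{-1}$, and checking that the multiparameter supremum costs no more than $R^{\varepsilon}$ beyond the one-parameter local smoothing. The first thing to try is to rescale the phase so that the decoupling theorem applies to a cone whose curvature is normalized. Decoupling constants for such a cone depend polynomially on its $C^M$ norms and on the inverse of its curvature lower bound, which again yields the loss $\delta_*^{-A}$.
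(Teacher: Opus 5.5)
Your first two steps agree with the paper. On the active annuli, the stationary part of the multiplier is $R^{-1}e^{\pm ih_{r'}(\mathrm D_t\xi)}b_{\delta,R}(t,\xi)$, with order-zero seminorms polynomial in $\delta_*^{-1}$. The nonstationary sectors, including arbitrarily large horizontal frequencies, are negligible. The supremum is then removed by the scaled Sobolev inequality in $t$. The inactive case $R\delta_*^r<1$, which you do not separate, is trivial, because $R^{1/6}\le\delta_*^{-r/3}R^{-1/6}$ there.

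The gap is the exponent count in your third step. Sobolev (or an $R^{-1}$-grid) in three parameters costs $R^{3/6}=R^{1/2}$, so what is actually needed is
\[
 \bigl\|R^{-|\beta|}\partial_t^\beta A^{\mathbf m}_{\mathrm{stat},R}f\bigr\|_{L^6(\R^3\times[1,2]^3)}
 \lesssim\delta_*^{-A}R^{-2/3+\varepsilon}\|f\|_6 .
\]
That is a gain of $R^{-1/3}$ over the fixed-time bound $R^{-1/3}$, not $R^{-1/6}$. Your scheme produces smoothing only in the single parameter $t_3$, and a one-parameter evolution gains at most $R^{-1/p}=R^{-1/6}$ at $p=6$. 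The grid in $(t_1,t_2)$ costs $(R^2)^{1/6}=R^{1/3}$. This is exactly the Sobolev loss it replaces, so it is "balanced" but brings no gain of its own. The total is therefore $R^{1/3}\cdot R^{1/6}\cdot R^{-1/3}\cdot R^{-1/6}=R^{0}$ (up to $R^{\varepsilon}$), and the proposition does not follow.

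The missing idea is a genuinely two-parameter local smoothing estimate. The paper fixes the ratio $t_1/t_2$ and keeps two parameters: the overall dilation $t$ and the vertical ratio $s$. Lemma~\ref{lem:d3-quadratic-normal-form} shows that $\partial_t\partial_\theta^2\Phi$ and $\partial_s\partial_\theta^2\Phi$ span $\{H,vv^{\mathsf T}\}$. This pair is linearly equivalent to $Q_0(w)=(w_1^2,w_2^2)$, with constants polynomial in $\kappa^{-1}$. The $\ell^6$ decoupling constant for this two-parameter model is $\lambda^{1/3+\varepsilon}$ (Lemma~\ref{lem:d3-uniform-decoupling}). Proposition~\ref{prop:d3-conic-smoothing} therefore gives $\kappa^{-A}\lambda^{1/3+\varepsilon}$ in $L^6(\R^3\times\mathcal T)$ for order-zero symbols, against the fixed-time $\lambda^{2/3}$. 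Combined with the order $-1$ symbol and the $R^{1/2}$ Sobolev loss, this gives exactly $R^{-1/6+\varepsilon}$.

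Your one-parameter input is also shaky on its own terms. The relevant cone lives in $\R^{3+1}$, not $\R^{2+1}$. More seriously, the $t_3$-family alone need not have cinematic curvature. For $r=2$ and $t=\mathbf 1$ one has $\partial_{t_3}h(\mathrm D_t\xi)=\xi_3^2/|\xi|$. Its restriction to $\xi_3=1$ has radial second derivative $(2|\xi'|^2-1)(1+|\xi'|^2)^{-5/2}$, which vanishes on $|\xi'|^2=\xi_3^2/2$. This cone corresponds to the latitude $y_3^2=2/3$, which lies inside the natural polar patch. It is precisely the possible singularity of the single Hessian $B$ in that lemma.
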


Only the polynomial loss in $\delta_*^{-1}$ is needed below. In particular,
we do not sum \eqref{eq:d3-annular-L6} over all annuli directly.

\begin{proposition}\label{prop:d3-decay}
Suppose $d=3$, $2\le r<\infty$, and
\begin{equation}\label{eq:d3-region}
 2<p<\infty,\qquad p\le q\le2p,\qquad
 b:=1-\frac{r+2}{2p}+\frac1q>0.
\end{equation}
For the local maximal operator on each fixed graph patch, there is
$\epsilon=\epsilon(p,q,r)>0$ such that
\begin{equation}\label{eq:d3-patch-decay}
 \|M_{\loc}P_R^{(3)}f\|_q
 \lesssim_{p,q,r} R^{-\epsilon}\|f\|_p,
 \qquad R\ge2.
\end{equation}
In particular, the diagonal estimate holds for
$p>\max\{2,r/2\}$.
\end{proposition}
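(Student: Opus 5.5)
The plan is to decompose $M_{\loc}P_R^{(3)}$ on the fixed graph patch into the dyadic pieces $M_{\loc}^{\mathbf m}P_R^{(3)}$, $\mathbf m=(m_1,m_2)$, and to split the sum according to a small parameter $\eta>0$. The \emph{annular} pieces are those with $\delta_*=\min(\delta_1,\delta_2)\ge R^{-\eta}$. The \emph{degenerate} pieces are those with $\delta_*<R^{-\eta}$. There are only $O((1+\eta\log R)^2)$ annular pieces, so polylogarithmic counting losses and the polynomial loss $\delta_*^{-A}\le R^{\eta A}$ in Proposition~\ref{prop:d3-annular-smoothing} are both affordable once $\eta$ is small compared with the decay exponent we extract. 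By sublinearity, it suffices to prove a bound $R^{-\epsilon}$ for each group.

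\emph{Step 1: exponent bookkeeping for $d=3$.} As in the $d\ge4$ subsection, record estimates as $(u,v,a,c)$ with the $V_\delta$-exponent $1-u+v$. For $d=3$, Proposition~\ref{prop:dyadic-patch-bounds} supplies
\[
 (1,1,1,0),\ (1,0,1,0),\ (0,0,0,0),\
 \left(\tfrac12,\tfrac12,\tfrac12,\tfrac14\right),\
 \left(\tfrac12,\tfrac14,\tfrac12,\tfrac14\right),\
 \left(\tfrac12,\tfrac1{12},\tfrac34,\tfrac14\right).
\]
For the $L^2\to L^{12}$ point the $V_\delta$-power is $7/12=1-u+v$, so it fits this format. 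The per-coordinate sum
\[
 \sum_m 2^{-m(1-u+v)}\lambda^{-c}
\]
converges to $R^{-\min\{c,(1-u+v)/r\}}$ up to logs. Summability therefore needs $a<2c$ and $ra<2(1-u+v)$. The first condition is only an equality at the $L^2\to L^2$ and $L^2\to L^4$ points, which is where the criticality sits. The second condition, evaluated on the convex hull of the points $(1/2,1/2)$, $(1/2,1/4)$, $(0,0)$ and $(1,1)$ with $a=u$, becomes $ru<2(1-u+v)$, that is $b>0$ with $u=1/p$, $v=1/q$.

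\emph{Step 2: degenerate pieces.} If $\delta_1<R^{-\eta}$, say, then the $m_1$-sum is controlled in one of two ways. Either $\delta_1^r R<1$, so that $\lambda_1=1$ and we use the surplus $ra<2(1-u+v)$, which is guaranteed strictly by $b>0$. Or $\lambda_1=R\delta_1^r$, and the tail $\sum_{\delta_1<R^{-\eta}}\delta_1^{1-u+v-rc}$ gains $R^{-\eta(1-u+v-rc)}$; for $u=v=1/2$ the exponent is $1-r/4$, which is positive only for $r<4$. I therefore expect this case to fail for large $r$ and would instead treat $\lambda_1=R\delta_1^r$ by choosing interpolation weights that put some mass on the $(1/2,1/12,3/4,1/4)$ or $L^1$ points, making $a<2c$ strict. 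The required interpolated point lies in the hull as long as $(u,v)$ is strictly inside the region allowed by \eqref{eq:d3-region}. The result is a bound $R^{-\epsilon_1(\eta)}$, with $\epsilon_1$ depending on $\eta$ but with the subsequent choice of $\eta$ kept compatible.

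\emph{Step 3: annular pieces.} For $\delta_*\ge R^{-\eta}$ we have $V_\delta\approx1$ up to $R^{O(\eta)}$, and Proposition~\ref{prop:dyadic-patch-bounds} gives bounds of size $R^{O(\eta)}$ at $(1/2,1/2)$ and $(1/2,1/4)$. We interpolate these with \eqref{eq:d3-annular-L6}, which gives $R^{-1/6+\varepsilon+A\eta}$ at $(1/6,1/6)$, and with the $L^\infty$ bound at $(0,0)$. Every $(u,v)$ in the open triangle spanned by $(1/2,1/2)$, $(1/2,1/4)$ and $(1/6,1/6)$, together with its segment toward $(0,0)$, then receives a bound $R^{-\theta/6+O(\varepsilon+\eta)}$, where $\theta>0$ is the weight on the $L^6$ point. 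This triangle region is exactly $u<1/2$, $v\ge u/2$, $v\le u$, which is the constraint $2<p$, $p\le q\le 2p$. Choosing first $\varepsilon$ and then $\eta$ small gives a net decay $R^{-\epsilon}$ after the $(\log R)^2$ count. The diagonal case $p>\max\{2,r/2\}$ then follows by setting $q=p$ in $b>0$.

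\emph{Main obstacle.} The delicate point is Step 2 for large $r$. There one must check that the convex-hull elimination leaves a strict margin in $a<2c$ uniformly for all degenerate pieces, using only $b>0$, $u<1/2$ and $v\ge u/2$. My first attempt would be to carry out the $(h,z)$-elimination as in the $d\ge4$ subsection, with $z$ now the weight on the $L^{12}$ point. If that elimination does not close near $u=1/2$, I would instead split off the pieces with exactly one small $\delta_i$. For those pieces I would apply \eqref{eq:d3-annular-L6} in the remaining large coordinate, accepting the loss $\delta_*^{-A}$ and offsetting it against the gain $V_\delta^{1-u+v}$ by interpolating toward the $L^\infty$ point.
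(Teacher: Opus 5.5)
Your architecture matches the paper's. You split the pieces at $\delta_*=R^{-\eta}$, with $\eta<1/r$ so that both coordinates are active on the annular pieces. You gain from $V_\delta$ on the degenerate pieces, and on the annular pieces you interpolate with \eqref{eq:d3-annular-L6}, paying $\delta_*^{-A}\le R^{A\eta}$. Two steps, however, fail as written.

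\textbf{Step 3 does not reach the closed boundary $q=2p$ of \eqref{eq:d3-region}.} In the $(u,v)$-plane, the points $(0,0)$, $(1/2,1/4)$, $(1/2,1/2)$, $(1/6,1/6)$ span the triangle whose vertices are the first three. The line $v=u/2$ is its edge from $(0,0)$ to $(1/2,1/4)$. Since this edge is a face of the hull, every convex representation of a point on it uses only those two vertices. So the $L^6$ point gets weight zero, and you obtain only $R^{O(\eta)}$ there. Your identification of the covered region with $\{u<1/2,\ u/2\le v\le u\}$ is therefore false on this edge.

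This is exactly where the $L^2\to L^{12}$ bound \eqref{eq:dyadic-2-12} is needed. On annular pieces it gives $(u,v,a)=(1/2,1/12,1/4)$ up to $\delta_*^{-O(1)}$. With $\varepsilon=1/48$,
\[
 \tfrac13\bigl(\tfrac12,\tfrac1{12},\tfrac14\bigr)
 +\tfrac23\bigl(\tfrac16,\tfrac16,-\tfrac7{48}\bigr)
 =\bigl(\tfrac5{18},\tfrac5{36},-\tfrac1{72}\bigr),
\]
which lies on $v=u/2$ with negative $R$-exponent. Interpolating this point with $(1/2,1/4,0)$ and $(0,0,0)$ gives decay on the whole segment $0<u<1/2$, $v=u/2$. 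On the diagonal you use $(1/6,1/6)$ with $(1/2,1/2)$ and $(0,0)$. Interpolation at fixed $u$ then fills in between the two lines. This is what the paper does.

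\textbf{Step 2 misdiagnoses the degenerate pieces, and the remedy you propose cannot work.} Interpolating the $L^2\to L^2$, $L^2\to L^4$ and $L^\infty$ bounds gives $a=u$ and $c=u/2$. So your exponent $1-u+v-rc$ is exactly $b$, which is positive throughout the region. The value $1-r/4$ comes from the excluded point $u=v=1/2$.

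Contrary to your Step 1 remark, none of the six points satisfies $a<2c$. Equality holds at the $L^2\to L^2$, $L^2\to L^4$ and $L^\infty$ points. At the $L^{12}$ point $a=3/4>1/2=2c$, and at the $L^1$ points $c=0<a$. Hence no choice of weights, and no $(h,z)$-elimination, makes $a<2c$ strict. This is precisely the three-dimensional criticality that forces the local smoothing input.

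The observation you need is that $\lambda_i\ge R\delta_i^r$ whether or not $i$ is active. Hence $R^u(\lambda_1\lambda_2)^{-u/2}\le V_\delta^{-ru/2}$, and every dyadic piece satisfies $\|M_{\loc}^{\mathbf m}P_R^{(3)}f\|_q\lesssim V_\delta^{b}\|f\|_p$. Summing over $\delta_*\le R^{-\eta}$ gives $R^{-\eta b}$ directly, with no case distinction. The fallback through \eqref{eq:d3-annular-L6} is unnecessary. As stated, it would not work anyway, since an uncontrolled loss $\delta_*^{-A}$ in the small parameter cannot be offset by $V_\delta^{1-u+v}$.
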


\begin{proof}
Write $T_{\delta,R}=M_{\loc}^{\mathbf m}P_R^{(3)}$ and put
$u=1/p$, $v=1/q$. First interpolate \eqref{eq:dyadic-22},
\eqref{eq:dyadic-24}, and the $L^\infty$ bound in
Proposition~\ref{prop:dyadic-patch-bounds}. For
$0\le u\le1/2$ and $u/2\le v\le u$, this gives
\begin{align}
 \|T_{\delta,R}f\|_q
 &\lesssim
 R^u V_\delta^{1-u+v}
 (\lambda_1\lambda_2)^{-u/2}\|f\|_p
 \nonumber\\
 &\le V_\delta^{\,1-(r+2)u/2+v}\|f\|_p
 =V_\delta^b\|f\|_p.
 \label{eq:d3-all-scale}
\end{align}
Indeed, $\lambda_i\ge R\delta_i^r$, so the second inequality holds
whether or not a coordinate is active.

We next obtain frequency decay when both coordinates are active.
It suffices to keep track of $(u,v,a)$ in an estimate
\[
 \|T_{\delta,R}f\|_q
 \lesssim \delta_*^{-B}R^a\|f\|_p,
\]
where $B\ge0$ is finite and may depend on the fixed exponents. In the
region $R\delta_i^r\ge1$ for $i=1,2$, the estimates
\eqref{eq:dyadic-24}, \eqref{eq:dyadic-22},
\eqref{eq:dyadic-2-12}, \eqref{eq:d3-annular-L6}, and the
$L^\infty$ bound give, respectively,
\[
 \begin{aligned}
 \mathsf A&=(1/2,1/4,0),&
 \mathsf D&=(1/2,1/2,0),&
 \mathsf B&=(1/2,1/12,1/4),\\
 \mathsf C&=(1/6,1/6,-7/48),&
 \mathsf O&=(0,0,0).
 \end{aligned}
\]
Here we used $\varepsilon=1/48$ in
\eqref{eq:d3-annular-L6}. The remaining powers of $V_\delta$
in the first three estimates are bounded by a fixed power of
$\delta_*^{-1}$.

To include the line $q=2p$, note that
\[
 \mathsf H:=\tfrac13\mathsf B+\tfrac23\mathsf C
   =(5/18,5/36,-1/72).
\]
The segments from $\mathsf H$ to $\mathsf A$ and to $\mathsf O$
therefore give a strictly negative frequency exponent at every point
$0<u<1/2$, $v=u/2$. Likewise, the segments from $\mathsf C$ to
$\mathsf D$ and to $\mathsf O$ give a strictly negative exponent on
$0<u<1/2$, $v=u$. Interpolating between these two estimates at the
same $u$ covers $u/2\le v\le u$. Thus, for each $(p,q)$ in
\eqref{eq:d3-region}, there are $e=e(p,q)>0$ and
$B=B(p,q,r)\ge0$ such that
\begin{equation}\label{eq:d3-active-decay}
 \|T_{\delta,R}f\|_q
 \lesssim \delta_*^{-B}R^{-e}\|f\|_p
 \quad\text{if }R\delta_i^r\ge1\ (i=1,2).
\end{equation}
All these interpolations can be performed on arbitrary measurable
linearizations of the maximal operator, with constants independent
of the selectors.

Choose
\[
 0<\eta<\min\{1/r,\ e/(2(B+1))\}.
\]
For annuli with $\delta_*\le R^{-\eta}$, the geometric series in
\eqref{eq:d3-all-scale} gives
\[
 \sum_{\delta_*\le R^{-\eta}}
 \|T_{\delta,R}f\|_q
 \lesssim_b R^{-\eta b}\|f\|_p.
\]
If $\delta_*>R^{-\eta}$, both coordinates are active, and
\eqref{eq:d3-active-decay} gives
\[
 \sum_{\delta_*>R^{-\eta}}
 \|T_{\delta,R}f\|_q
 \lesssim (1+\log R)^2R^{-e+\eta B}\|f\|_p
 \lesssim_{p,q,r} R^{-e/4}\|f\|_p.
\]
Minkowski's inequality and
$M_{\loc}P_R^{(3)}f\le\sum_{\mathbf m}T_{\delta,R}f$
now prove \eqref{eq:d3-patch-decay}, for instance with
$\epsilon=\min\{\eta b,e/4\}>0$.
\end{proof}

Summing \eqref{eq:d3-patch-decay} over dyadic $R$, and using the
low-frequency kernel envelope already established in this section,
proves the local estimate on the fixed patch. A finite sum over graph
patches proves Theorem~\ref{thm:lplq-d3}. The diagonal specialization,
together with the coordinate-frequency transfer in
Lemma~\ref{lem:global-decay}, supplies the local high-frequency input
for the global theorem when $d=3$.

\section{Normalized \texorpdfstring{$L^2$}{L2}-improving estimates}
\label{sec:proof-local-patch}

We prove Lemma~\ref{lem:normalized-improving}. The multiplier
factorization reduces the phase to one-dimensional power phases.
A fixed-height $TT^*$ estimate gives the $L^4$ bound; a common full
spatial kernel estimate gives the $L^{2d}$ bound for $d\ge4$ and
the $L^{12}$ bound for $d=3$. We then sum the sectors and recover
the companion $L^2$ bound by spatial localization.

By Fourier inversion,
\[
 A_\delta P_R^{(d)}f(x,t)
 =
 \int_{\mathbb R^d}
 e^{ix\cdot\xi}
 m_{t,\delta}(\xi)
 \psi(\xi_d/R)
 \widehat f(\xi)\,d\xi,
\]
where
\[
 m_{t,\delta}(\xi)
 =
 V_\delta\int_{\mathbb R^n}
 e^{-iF_{t,\xi}(v)}B_\delta(v)\,dv,
 \qquad
 F_{t,\xi}(v)
 =
 \sum_{i=1}^n t_i\xi_i v_i
 +t_d\xi_d\Phi_\delta(v).
\]
Fixed Fourier normalization constants are suppressed in the
oscillatory integral representations and absorbed into the amplitudes
and implicit constants.
After splitting the annular cutoff into the two signs of $\xi_d$, it
suffices to consider $\xi_d>0$. The case $\xi_d<0$ is treated in the
same way after reversing all frequency signs.

\subsection{The canonical phase of the \texorpdfstring{$\ell^r$}{ell\string^r} sphere}

Let
\[
 h(\xi)
 =
 \left(\sum_{j=1}^d|\xi_j|^{r'}\right)^{1/r'},
 \qquad
 r'=\frac{r}{r-1}.
\]
Consider the positive graph
\[
 y(v)=(v,\Phi(v)),\qquad
 \Phi(v)=\left(1-\sum_{i=1}^n v_i^r\right)^{1/r},
 \qquad v_i>0,\quad \Phi(v)>0.
\]
For this preliminary calculation, take
$\chi\in C_c^\infty(\Omega_+)$, where
\[
 \Omega_+=\left\{v:v_i>0,\ \sum_i v_i^r<1\right\}.
\]
Write $d\mu_\chi=\chi(v)\,dv$. Then
\begin{equation}
 \widehat{d\mu_\chi}(R\xi)
 =
 \int_{\mathbb R^n}
 e^{-iR(\xi'\cdot v+\xi_d\Phi(v))}
 \chi(v)\,dv.
\end{equation}

On the positive frequency orthant, the critical point equations are
\[
 \xi_i
 =
 \xi_d v_i^{r-1}\Phi^{1-r},
 \qquad 1\le i\le n.
\]
Hence
\[
 \frac{v_i}{\Phi}
 =
 \left(\frac{\xi_i}{\xi_d}\right)^{r'-1},
\]
and the unique critical point is
\begin{equation}
 v_i^*
 =
 \left(\frac{\xi_i}{h(\xi)}\right)^{r'-1},
 \qquad
 \Phi_*
 =
 \left(\frac{\xi_d}{h(\xi)}\right)^{r'-1}.
\end{equation}
Its critical value is
\begin{align}
 \xi'\cdot v^*+\xi_d\Phi_*
 &=
 \frac{\sum_{i=1}^n\xi_i^{r'}+\xi_d^{r'}}
 {h(\xi)^{r'-1}}
 =
 h(\xi).
\end{align}
Thus, on a fixed compact positive frequency set for which the critical
point remains in the patch, stationary phase gives
\begin{equation}
 \widehat{d\mu_\chi}(R\xi)
 =
 R^{-n/2}e^{-iRh(\xi)}a_R(\xi),
 \qquad
 \|a_R\|_{C^N}
 \lesssim_N
 \|\chi\|_{C^{M(N)}}.
\end{equation}

The same sign condition appears for the actual phase $F_{t,\xi}$.
Indeed,
\[
 \partial_{v_i}F_{t,\xi}(v)
 =
 t_i\xi_i
 -
 t_d\xi_d\delta_i^r
 v_i^{r-1}\Phi_\delta(v)^{1-r}.
\]
Since we are considering $\xi_d>0$, a stationary point in the $v_i$
variable can occur only when $\xi_i>0$. If $\xi_i<0$, then
\[
 |\partial_{v_i}F_{t,\xi}(v)|
 =
 t_i|\xi_i|
 +
 t_d\xi_d\delta_i^r
 v_i^{r-1}\Phi_\delta(v)^{1-r}
 \gtrsim
 |\xi_i|+R\delta_i^r,
\]
so the corresponding frequency sector is nonstationary.

We treat mixed-sign frequency sectors as nonstationary.
To track the degeneracy scales $\delta_i$, we now derive an exact
sector factorization for $m_{t,\delta}$.

\subsection{Exact factorization of \texorpdfstring{$m_{t,\delta}$}{m(t, delta)}}

We now fix the rescaled dyadic patch and decompose the horizontal
frequencies according to their stationary scales. Recall that $\lambda_i=\max\{1,R\delta_i^r\},
 I=\{i:R\delta_i^r\ge1\}.$
We decompose $\xi_i$ dyadically according to $\lambda_i$:
\begin{align}
 S_{i,0}
 &=
 \{\xi_i:|\xi_i|\le C_0\lambda_i\},\\
 S_{i,k}
 &=
 \{\xi_i:
 2^{k-1}C_0\lambda_i<|\xi_i|
 \le2^kC_0\lambda_i\},
 \qquad k\ge1,
\end{align}
where $C_0$ is a sufficiently large fixed constant.

For $i\in I$, we further split the bounded sector by setting
\begin{align}
 S_{i,0}^{\mathrm{stat}}
 &:=
 S_{i,0}\cap
 \{\xi_i:
 cR\delta_i^r<\xi_i<c^{-1}R\delta_i^r\},
 \\
 S_{i,0}^{\mathrm{non}}
 &:=
 S_{i,0}\setminus S_{i,0}^{\mathrm{stat}}.
\end{align}
Here $c>0$ is chosen sufficiently small and $C_0$ sufficiently large
so that $S_{i,0}^{\mathrm{stat}}$ contains every possible stationary
frequency in the $v_i$ variable. A critical point need not lie in the
amplitude support. For a fixed frequency sector, let
$S\subset I$ be the set of indices for which
$\xi_i\in S_{i,0}^{\mathrm{stat}}$.

The indices are therefore divided into four classes:
\begin{itemize}
\item $k_i\ge1$: remote and nonstationary;
\item $i\in I\setminus S$ and $k_i=0$: active and nonstationary;
\item $i\in S$: active and stationary;
\item $i\notin I$ and $k_i=0$: inactive and bounded.
\end{itemize}

\begin{figure}[H]
\centering
\resizebox{\linewidth}{!}{%
\begin{tikzpicture}[x=0.89cm,y=1.85cm,>=Stealth,
                    every node/.style={font=\small}]

 \fill[sectornonstationary!17] (-7,.5) rectangle (7,2);
 \fill[sectorremote!20] (-7,.5) rectangle (-3,2);
 \fill[sectorremote!20] (3,.5) rectangle (7,2);
 \fill[sectorstationary!24] (.70,.5) rectangle (2.00,2);

 \draw[gray!65,dashed] (-7,.5)--(7,.5);
 \draw[gray!65,dashed] (-7,2)--(7,2);

 \draw[sectorremote,thick] (-3,.5)--(-3,2);
 \draw[sectorremote,thick] (3,.5)--(3,2);
 \draw[sectorremote,densely dashed] (-6,.5)--(-6,2);
 \draw[sectorremote,densely dashed] (6,.5)--(6,2);

 \draw[sectorstationary,thick] (.70,.5)--(.70,2);
 \draw[sectorstationary,thick] (2.00,.5)--(2.00,2);

 \draw[sectorstationary!90!black,thick,dashed] (0,0)--(1.96,2.4);
 \node[sectorstationary!90!black,anchor=west,font=\footnotesize]
       at (2.1,2.32) {one representative critical ray: $\xi_i=\kappa\delta_i^r\xi_d$};

 \draw[->,thick] (-7.25,0)--(7.45,0) node[right] {$\xi_i$};
 \draw[->,thick] (0,-.08)--(0,2.62) node[above] {$\xi_d$};
 \node[below left] at (0,0) {$0$};

 \foreach \yy/\lab in {.5/{R/2},1/{R},2/{2R}} {
   \draw[thick] (-.08,\yy)--(.08,\yy);
   \node[anchor=east,fill=white,inner sep=1pt] at (-.13,\yy) {$\lab$};
 }

 \foreach \xx/\lab in {-6/{-2C_0\lambda_i},-3/{-C_0\lambda_i},3/{C_0\lambda_i},6/{2C_0\lambda_i}} {
   \draw[gray] (\xx,0)--(\xx,-.065);
   \node[below,font=\footnotesize] at (\xx,-.07) {$\lab$};
 }

 \node[align=center,sectorremote!80!black] at (-4.5,1.25)
       {Remote\\$k_i\ge1$};

 \node[align=center,sectornonstationary!80!black,font=\footnotesize]
       at (-1.55,1.38) {Bounded active\\nonstationary\\$k_i=0$};

 \node[align=center,sectorstationary!80!black,font=\footnotesize,
       fill=sectorstationary!12,inner sep=3pt] at (1.35,1.45)
       {Stationary\\sector\\$i\in S$};

 \node[align=center,sectorremote!80!black] at (4.5,1.25)
       {Remote\\$k_i\ge1$};

 \draw[sectorstationary] (.70,.5)--(.70,.30)--(.95,.21);
 \node[below,font=\footnotesize,sectorstationary!80!black]
       at (.95,.21) {$c\lambda_i$};

 \draw[sectorstationary] (2.00,.5)--(2.00,.30)--(1.78,.21);
 \node[below,font=\footnotesize,sectorstationary!80!black]
       at (1.78,.21) {$c^{-1}\lambda_i$};

 \node[anchor=north,align=center,sectorremote!80!black,font=\footnotesize]
       at (-4.7,-.47) {Remote\\(also nonstationary)\\
       $|\xi_i|\asymp 2^{k_i}\lambda_i$\\gain: $(2^{k_i}\lambda_i)^{-K}$};

 \node[anchor=north,align=center,sectornonstationary!80!black,font=\footnotesize]
       at (0,-.47) {Bounded active\\nonstationary\\
       $|\partial_{v_i}F|\gtrsim \lambda_i$\\gain: $\lambda_i^{-K}$};

 \node[anchor=north,align=center,sectorstationary!80!black,font=\footnotesize]
       at (4.7,-.47) {Stationary sector\\$k_i=0$\\
       $|\xi_i|\asymp \lambda_i$\\gain: $(R\delta_i^r)^{-1/2}$};
\end{tikzpicture}%
}
\caption{Frequency decomposition for the phase
$\sum_{i=1}^n t_i\xi_i v_i+t_d\xi_d\Phi_\delta(v)$ on $\xi_d>0$.}
\end{figure}

The displayed sector sets specify the frequency scales. In the actual
partition we use overlapping smooth cutoffs, as follows. Let
$\chi_b\in C_c^\infty(\mathbb R)$ equal one on $[-1,1]$ and be
supported in $[-2,2]$, and set
\[
 \beta_{i,0}(\xi_i)=\chi_b\!\left(\frac{\xi_i}{C_0\lambda_i}\right),
 \qquad
 \beta_{i,k}(\xi_i)=
 \chi_b\!\left(\frac{\xi_i}{2^kC_0\lambda_i}\right)
 -\chi_b\!\left(\frac{\xi_i}{2^{k-1}C_0\lambda_i}\right),
 \quad k\ge1.
\]
Their sum is one. Choose $c>0$ small enough that every possible
stationary ratio $\xi_i/(R\delta_i^r)$ lies in the interior of
$[c,c^{-1}]$, with a fixed margin. This choice also covers the phases
obtained by deleting previously eliminated nonstationary coordinates:
deleting such terms increases the graph height, which remains in a
fixed positive interval. For each active coordinate, choose a fixed
smooth function $\theta$ equal one on $[c,c^{-1}]$ and supported in
$[c/2,2c^{-1}]$, and split the bounded piece into
\[
 \beta_{i,0}\theta\!\left(\frac{\xi_i}{R\delta_i^r}\right)
 +\beta_{i,0}\left[1-\theta\!\left(\frac{\xi_i}{R\delta_i^r}\right)\right].
\]
Take $C_0$ sufficiently large relative to this fixed interval. The
second piece is uniformly nonstationary on the amplitude support,
and the pieces with $k_i\ge1$ are uniformly remote. After
$\xi_i=2^{k_i}\lambda_i\eta_i$, all cutoff derivatives are uniformly
bounded. We use the same sector notation for these smooth pieces.

The following factorization records the decay from the first three
classes; the bounded inactive variables contribute no additional
oscillatory decay.

\begin{lemma}
\label{lem:local-exact-sector}
Fix $t\in[1,2]^d$. On the support of $\psi(\xi_d/R)$ with
$\xi_d>0$, and for any fixed integers $K,N\ge1$, $m_{t,\delta}$
is a sum over frequency sectors $(S,k)$, where $S\subset I$ and
$k\in\mathbb Z_{\ge0}^n$, of terms of the form
\begin{equation}\label{eq:sector-factorization}
 V_\delta
 \prod_{i\in S}(R\delta_i^r)^{-1/2}
 \prod_{\substack{i\in I\setminus S\\ k_i=0}}
 (R\delta_i^r)^{-K}
 \prod_{i:k_i\ge1}(2^{k_i}\lambda_i)^{-K}
 e^{-ih_S(t,\eta)}
 a_{R,\delta,S,k}(t,\eta),
\end{equation}
where
\begin{align}\label{eq:local-dual-phase}
\begin{split}
 h_S(t,\eta)
 &=
 \left(
 |t_d\xi_d|^{r'}
 +
 \sum_{i\in S}|t_i\xi_i/\delta_i|^{r'}
 \right)^{1/r'}\\
 &=
 R\left\{
 t_d^{r'}\left[
 |\eta_d|^{r'}
 +
 \sum_{i\in S}
 \delta_i^r(t_i/t_d)^{r'}|\eta_i|^{r'}
 \right]
 \right\}^{1/r'}.
\end{split}
\end{align}
Here
\[
 \xi_d=R\eta_d,\qquad
 \xi_i=\omega_i\eta_i,\qquad
 \omega_i=2^{k_i}\lambda_i,
\]
and $k_i=0$ for every $i\in S$. The amplitudes are supported in
one fixed compact box in $\eta$, independent of the scales. Moreover,
\begin{equation}\label{eq:local-sector-amplitude}
 \|a_{R,\delta,S,k}\|_{C_{(t,\eta)}^N}
 \le
 C_{K,N}\|B_\delta\|_{C^{M(K,N,d)}}.
\end{equation}
The constants and the finite integer $M(K,N,d)$ are independent
of $R$, $\delta$, $S$, $k$, and the relative sizes of the $\lambda_i$.
They may depend on the fixed $d,r$, support margins, and cutoffs.
\end{lemma}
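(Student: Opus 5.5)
We insert the partition of unity $\prod_{i=1}^n\beta_{i,k_i}(\xi_i)$, with the smooth stationary/nonstationary splitting for $k_i=0$, $i\in I$. This writes $m_{t,\delta}\psi(\xi_d/R)$ as a sum over sectors $(S,k)$. For a fixed sector we rescale $\xi_d=R\eta_d$ and $\xi_i=\omega_i\eta_i$, so every cutoff becomes a fixed compactly supported function of $\eta$ with uniformly bounded derivatives.

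The plan is to evaluate
\[
 V_\delta\int e^{-iF_{t,\xi}(v)}B_\delta(v)\,dv
\]
one variable at a time. We first treat the nonstationary coordinates, and only afterwards the stationary ones, by iterated one-dimensional stationary phase.

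\emph{Step 1 (nonstationary variables).} For an index with $k_i\ge1$, or with $i\in I\setminus S$ and $k_i=0$, we have
\[
 |\partial_{v_i}F_{t,\xi}|\gtrsim\omega_i
\]
on the amplitude support. For $k_i\ge1$ this holds because $|t_i\xi_i|\gtrsim 2^{k_i}C_0\lambda_i$ dominates $t_d\xi_d\delta_i^r v_i^{r-1}\Phi_\delta^{1-r}\lesssim R\delta_i^r\le\lambda_i$, once $C_0$ is large. For the active nonstationary piece it holds because the ratio $\xi_i/(R\delta_i^r)$ lies outside $[c,c^{-1}]$ by a margin. Moreover, $\partial_{v_i}^jF$ is $O(R\delta_i^r)=O(\omega_i)$ for $j\ge2$. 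We integrate by parts $K$ times in $v_i$ with the operator $(i\partial_{v_i}F)^{-1}\partial_{v_i}$, holding the other variables fixed. This yields the factor $\omega_i^{-K}$, with an amplitude that is smooth in $(t,\eta,v)$ and whose $C^N$ norms are controlled by finitely many derivatives of $B_\delta$.

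The decay must not destroy the smooth dependence on $\eta$. Derivatives in $\eta_i$ bring down $t_i\omega_i v_i$, so instead of differentiating the phase directly we write the resulting integral as $e^{-i(\text{phase})}$ times a symbol. Concretely, we note that after integrating out $v_i$ the leftover integrand still contains $e^{-it_i\omega_i\eta_iv_i}$. To avoid this, we do not integrate out the nonstationary $v_i$. Instead we keep the repeated integration by parts inside the full integral, and define the amplitude as the remaining integral over all $v$ after the stationary ones are evaluated. The key observation is that each $\eta_i$-derivative costing $\omega_i$ is compensated by $K$ extra integrations by parts. Taking $K$ larger than needed, we absorb this into $C_{K,N}$.

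\emph{Step 2 (stationary variables).} With the nonstationary coordinates still present as parameters, we apply one-dimensional stationary phase successively in each $v_i$, $i\in S$. In $v_i$ the phase is
\[
 t_i\omega_i\eta_iv_i+t_d R\eta_d\Phi_\delta(v),
\]
whose second derivative is $\asymp R\delta_i^r$, the margin choice of $c$ guaranteeing a nondegenerate critical point. Each stationary phase step gives the factor $(R\delta_i^r)^{-1/2}$ times a smooth symbol. Exactness (no error term) is obtained by writing the result as $e^{-i\,\text{crit.value}}$ times an amplitude defined as the full remainder. This remainder is a symbol in the large parameter $R\delta_i^r\ge1$, and it has bounded $C^N$ norms because the rescaled phase $(R\delta_i^r)^{-1}F$ has uniformly bounded derivatives.

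The critical value of the reduced graph is computed as in the canonical phase calculation. Eliminating $v_i$ replaces the term $t_d\xi_d\Phi$ by the $\ell^{r'}$ combination of $t_d\xi_d$ and $t_i\xi_i/\delta_i$. Iterating over $S$, and then noting that nonstationary coordinates are absent from the critical point equations up to bounded amplitude factors (the height stays in a fixed interval), produces $h_S$ in \eqref{eq:local-dual-phase}.

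\emph{Main obstacle.} The delicate point is the uniformity of \eqref{eq:local-sector-amplitude} in the relative sizes of the $\lambda_i$ and $R\delta_i^r$, together with the exact, error-free form of the factorization. I would handle this by induction on the number of eliminated variables. Each step is stated as a lemma for a one-variable phase $\mu\, g(v,\text{params})$, with $\mu\ge1$ and $g$ in a fixed $C^\infty$-bounded family with nondegenerate (resp. nonvanishing) derivative. This gives a symbol with bounds independent of $\mu$ and of the parameters, so the induction never sees the relative scales.
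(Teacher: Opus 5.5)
\paragraph{Comparison with the paper.} Your route is sound in outline. It eliminates the variables in a different order from the paper, and one claim in your Step~2 is false as stated.

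\emph{The paper's order.} The paper integrates out each nonstationary $v_i$ \emph{first}. It splits the phase as $\Phi_0+\Phi_i$ with $\Phi_0$ independent of $v_i$. The divided-difference identity shows that every derivative of the root difference in $\Phi_i$ is $O(R\delta_i^r)$. It then integrates by parts in $v_i$, with additional integrations by parts paying for the auxiliary $(t,\eta,v)$-derivatives. This deletes $v_i$ from the graph height. After the exact stationary-phase steps (Morse coordinate and Gaussian identity), only the inactive variables remain, and their residual phase $h_S[(1-\sum_{j\in J_0}(\delta_jv_j)^r)^{1/r}-1]$ is $O(R\sum_{j\in J_0}\delta_j^r)=O(1)$. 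The cost is that later nonstationary lower bounds concern phases with deleted coordinates, which is why $c$ is also chosen to cover those stationary ratios.

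\emph{Your order.} You integrate by parts against the original $F$ while keeping $v_i$ in the integral. Your lower bounds therefore come directly from the sector definitions, which is cleaner on that side.

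\emph{The false claim and its repair.} The price of your order appears after the stationary step. You say the nonstationary coordinates enter the critical value only through bounded amplitude factors. In fact, eliminating $v_S$ with $v_{S^c}$ as parameters leaves the phase $\sum_{j\notin S}t_j\xi_jv_j+h_S(t,\eta)\bigl(1-\sum_{j\notin S}(\delta_jv_j)^r\bigr)^{1/r}$. After $e^{-ih_S}$ is factored out, the residual $h_S[(\cdots)^{1/r}-1]$ has size $\asymp R\delta_j^r$ for each active nonstationary $j$, which can be as large as $R$. Its derivatives in $\eta_d$, in $\eta_i$ for $i\in S$, and in $t$ are equally large, not only the $\eta_j$-derivatives you mention.

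The repair is your own surplus-decay device. Since $R\delta_j^r\le\lambda_j\le\omega_j$ for every nonstationary $j$, performing $K+N$ integrations by parts against $F$ before the stationary step leaves a factor $\prod_j\omega_j^{-N}$ in the amplitude. That factor absorbs $N$ such derivatives. These integrations by parts must be done up front. Doing them after the stationary step would require a new lower bound for the modified phase with coefficient $h_S$, which the sector cutoffs were not designed to provide.
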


We postpone the proof of Lemma~\ref{lem:local-exact-sector} to
Section~\ref{sec:proof-lem-exact}. With $\omega_i$ as above, let
\[
 \mathfrak D=\diag(\omega_1,\ldots,\omega_n,R)
\]
and make the change of variables $\xi=\mathfrak D\eta$. Each frequency
sector is then reduced to
\begin{align}\label{reduced-oio}
 &V_\delta
 \prod_{i\in S}(R\delta_i^r)^{-1/2}
 \prod_{\substack{i\in I\setminus S\\ k_i=0}}
 (R\delta_i^r)^{-K}
 \prod_{i:k_i\ge1}(2^{k_i}\lambda_i)^{-K}
 \left(R\prod_{i=1}^n\omega_i\right) \nonumber\\
 &\qquad\times
 \int_{\mathbb R^d}
 e^{i(\mathfrak D x\cdot\eta-h_S(t,\eta))}
 a_{R,\delta,S,k}(t,\eta)
 \psi(\eta_d)
 \widehat f(\mathfrak D\eta)\dd\eta.
\end{align}

We next express $e^{-ih_S(t,\eta)}$ by scalar Fourier inversion.
The resulting nonlinear phase separates into one-dimensional power
phases.

\subsection{Scalar Fourier inversion}

In the previous subsection, each frequency sector was reduced to a multiplier
\[
 C_{R,\delta,S,k}e^{-ih_S(t,\eta)}a_{R,\delta,S,k}(t,\eta),
\]
where $h_S(t,\eta)=R\bigl(t_d^{r'}|\eta_d|^{r'}+\sum_{i\in S}\delta_i^r
t_i^{r'}|\eta_i|^{r'}\bigr)^{1/r'}$ and
\[
 C_{R,\delta,S,k}
 =
 V_\delta
 \prod_{i\in S}(R\delta_i^r)^{-1/2}
 \prod_{\substack{i\in I\setminus S\\ k_i=0}}(R\delta_i^r)^{-K}
 \prod_{i:k_i\ge1}(2^{k_i}\lambda_i)^{-K}.
\]
We remove the outer $1/r'$-power by scalar Fourier inversion.

For $i\in S$, one has $k_i=0$, $\lambda_i=R\delta_i^r$, and
$\xi_i=R\delta_i^r\eta_i$. Since also $|\xi_d|\sim R$, we have
$|\eta_d|,|\eta_i|\sim1$ for $i\in S$. Hence
$Z:=t_d^{r'}\bigl(|\eta_d|^{r'}+\sum_{i\in S}
\delta_i^r(t_i/t_d)^{r'}|\eta_i|^{r'}\bigr)$ ranges in a fixed compact
interval $\mathcal I\Subset(0,\infty)$. Choose
$\beta\in C_c^\infty((0,\infty))$ with $\beta=1$ on $\mathcal I$ and set
$k_R(s):=(2\pi)^{-1}\int e^{-isw}\beta(w)e^{-iRw^{1/r'}}\,dw$.

\begin{lemma}\label{lem:scalar-kernel}
Uniformly in $R\ge2$,
\begin{equation}\label{eq:scalar-inversion}
 e^{-ih_S(t,\eta)}
 =
 \int_{\R}t_d^{-r'}k_R\left(\frac{s}{t_d^{r'}}\right)
 e^{is\left(
 |\eta_d|^{r'}+
 \sum_{i\in S}\delta_i^r(t_i/t_d)^{r'}|\eta_i|^{r'}
 \right)}\,ds.
\end{equation}
Moreover, $\|k_R\|_2\lesssim1$ and, for every $N\ge1$,
$|k_R(R\sigma)|\lesssim_N R^{-1/2}(1+|\sigma|)^{-N}$. Equivalently,
\begin{equation}\label{eq:scalar-inversion-rescaled}
 e^{-ih_S(t,\eta)}
 =
 R\int_{\R}k_R(R\sigma)e^{iR\sigma Q_t(\eta)}\,d\sigma,
 \qquad
 Q_t(\eta)
 =
 t_d^{r'}|\eta_d|^{r'}
 +
 \sum_{i\in S}\delta_i^rt_i^{r'}|\eta_i|^{r'}.
\end{equation}
\end{lemma}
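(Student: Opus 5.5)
The identity is Fourier inversion in the scalar variable $w$, applied to $g(w):=\beta(w)e^{-iRw^{1/r'}}$. This $g$ is smooth with compact support in $(0,\infty)$. By definition $k_R=\check g$ up to the fixed normalization, so
\[
 \int_\R k_R(s)e^{isw}\,ds=g(w).
\]
Set $W=|\eta_d|^{r'}+\sum_{i\in S}\delta_i^r(t_i/t_d)^{r'}|\eta_i|^{r'}$, so that $Z=t_d^{r'}W\in\mathcal I$. Substituting $s=t_d^{r'}s'$ gives
\[
 \int t_d^{-r'}k_R(s/t_d^{r'})e^{isW}\,ds=\int k_R(s')e^{is'Z}\,ds'=g(Z).
\]
Since $\beta=1$ on $\mathcal I$, we get $g(Z)=e^{-iRZ^{1/r'}}=e^{-ih_S(t,\eta)}$, by the second line of \eqref{eq:local-dual-phase}. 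This proves \eqref{eq:scalar-inversion}. The rescaled form \eqref{eq:scalar-inversion-rescaled} follows by writing $s'=R\sigma$ in the middle integral and noting $Z=Q_t(\eta)$. The bound $\|k_R\|_2\lesssim1$ is Plancherel: $\|k_R\|_2\simeq\|g\|_2=\|\beta\|_2$, independent of $R$.

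The pointwise bound is the real content. Write
\[
 k_R(R\sigma)=(2\pi)^{-1}\int \beta(w)e^{-iR(\sigma w+w^{1/r'})}\,dw,
\]
an oscillatory integral with large parameter $R$ and phase $\phi_\sigma(w)=\sigma w+w^{1/r'}$ on the fixed compact set $\supp\beta\subset[a,b]\subset(0,\infty)$. We have $\phi_\sigma'(w)=\sigma+\frac1{r'}w^{-1/r}$ and $\phi_\sigma''(w)=-\frac1{rr'}w^{-1/r-1}$. Since $r<\infty$, $\phi''_\sigma$ is bounded away from zero on $[a,b]$, uniformly in $\sigma$.

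We split according to the size of $\sigma$.
\begin{itemize}
\item \emph{Large $|\sigma|$.} Let $C_1$ be large relative to $\sup_{[a,b]}\frac1{r'}w^{-1/r}$. For $|\sigma|\ge C_1$ we have $|\phi_\sigma'|\gtrsim|\sigma|$. The higher derivatives of $\phi_\sigma$ do not depend on $\sigma$ and are bounded. Integrating by parts $N$ times with the operator $(-iR\phi'_\sigma)^{-1}\partial_w$ gives $\lesssim_N(R|\sigma|)^{-N}\le R^{-1/2}|\sigma|^{-N}$.
\item \emph{Bounded $|\sigma|$.} For $|\sigma|\le C_1$, van der Corput's lemma (second-derivative version, with the amplitude $\beta$ handled by integrating by parts against its derivative) gives $\lesssim R^{-1/2}(\|\beta\|_\infty+\|\beta'\|_1)$. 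This is uniform in $\sigma$, hence $\lesssim R^{-1/2}(1+|\sigma|)^{-N}$ in this range.
\end{itemize}
Combining the two ranges gives the stated bound uniformly in $R\ge2$.

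The only delicate point is uniformity. The constants must depend only on $\beta$, $r$, and the interval $\mathcal I$, which fixes $\supp\beta$. This holds because $\mathcal I$ was shown above to be a fixed compact subinterval of $(0,\infty)$, independent of $\delta$, $S$, $t$, $R$. No stationary-phase expansion is needed, only these two upper bounds.
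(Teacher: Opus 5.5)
Your proof is correct and follows the paper's argument essentially step for step. The identity comes from Fourier inversion of $\beta(w)e^{-iRw^{1/r'}}$ at $w=Z$ followed by the dilation $s\mapsto s/t_d^{r'}$, the bound $\|k_R\|_2\lesssim1$ is Plancherel, and the pointwise bound splits into bounded $|\sigma|$ (van der Corput, using $|\phi_\sigma''|\asymp1$) and large $|\sigma|$ (integration by parts, using $|\phi_\sigma'|\gtrsim|\sigma|$). The only cosmetic difference is that you substitute $s'=R\sigma$ in the intermediate integral rather than $s=Rt_d^{r'}\sigma$ in the first identity, which is the same change of variables.
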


\begin{proof}
Since $h_S(t,\eta)=RZ^{1/r'}$ and $\beta(Z)=1$, Fourier inversion gives
\[
 e^{-ih_S(t,\eta)}=\int_\R k_R(s)e^{isZ}\,ds.
\]
The change of variables $s\mapsto s/t_d^{r'}$ gives
\eqref{eq:scalar-inversion}, while Plancherel gives
\[
 \|k_R\|_2\lesssim\|\beta(w)e^{-iRw^{1/r'}}\|_2=\|\beta\|_2.
\]

For the pointwise bound, put $\phi_\sigma(w)=\sigma w+w^{1/r'}$.
Then
\[
 k_R(R\sigma)=(2\pi)^{-1}
 \int e^{-iR\phi_\sigma(w)}\beta(w)\,dw.
\]
On $\supp\beta$,
$|\phi_\sigma''(w)|\asymp1$, so van der Corput gives
$|k_R(R\sigma)|\lesssim R^{-1/2}$ for bounded $\sigma$. If $|\sigma|$
is sufficiently large, then $|\phi_\sigma'(w)|\gtrsim|\sigma|$ on
$\supp\beta$, and repeated integration by parts gives
$|k_R(R\sigma)|\lesssim_N R^{-N}|\sigma|^{-N}$. Combining the two
estimates yields the stated bound. Finally, $s=Rt_d^{r'}\sigma$ in
\eqref{eq:scalar-inversion} gives \eqref{eq:scalar-inversion-rescaled}.
\end{proof}

By \eqref{eq:scalar-inversion}, \eqref{reduced-oio} becomes
\begin{align}\label{reduced-oio2}
 &C_{R,\delta,S,k}
 \left(R\prod_{i=1}^n\omega_i\right)
 \int_{\mathbb R}
 t_d^{-r'}k_R\left(\frac{s}{t_d^{r'}}\right)
 \int_{\mathbb R^d}
 e^{i\left(
 \mathfrak D x\cdot\eta+
 s|\eta_d|^{r'}+
 s\sum_{i\in S}\delta_i^r(t_i/t_d)^{r'}|\eta_i|^{r'}
 \right)}
 \nonumber\\
 &\hspace{35mm}\times
 a_{R,\delta,S,k}(t,\eta)\psi(\eta_d)
 \widehat f(\mathfrak D\eta)\,d\eta ds.
\end{align}
For the $L^{2d}$ estimate we shall instead use the equivalent form
\begin{align}\label{reduced-oio-2d}
 &C_{R,\delta,S,k}
 \left(R\prod_{i=1}^n\omega_i\right)R
 \int_{\R}k_R(R\sigma)
 \int_{\R^d}
 e^{i(\mathfrak D x\cdot\eta+R\sigma Q_t(\eta))}
 a_{R,\delta,S,k}(t,\eta)\psi(\eta_d)
 \widehat f(\mathfrak D\eta)\,d\eta d\sigma .
\end{align}

\subsection{The fixed-height \texorpdfstring{$TT^*$}{TT*} estimate}

We use a $TT^*$ argument to obtain the estimate needed for
\eqref{eq:normalized-L4-bound}. We first record a simple oscillatory
integral estimate.

\begin{lemma}\label{lem:mixed-linear-power}
Let $r'=r/(r-1)\in(1,2]$, and let $\vartheta$ be a smooth cutoff
supported in a compact subinterval of either $(0,\infty)$ or
$(-\infty,0)$. Then
\begin{equation}\label{eq:mixed-linear-power}
 \left|
 \int e^{i(Av+B|v|^{r'})}\vartheta(v)\dd v
 \right|
 \le
 C(1+|A|)^{-1/2},
 \qquad A,B\in\R.
\end{equation}
The estimate is uniform for families of amplitudes with uniformly
bounded $C^1$ norms and support in a common fixed interval.
\end{lemma}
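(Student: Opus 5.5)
The plan is to split according to the size of $|A|$ and use the convexity of the power phase on the support of $\vartheta$. Write $\phi(v)=Av+B|v|^{r'}$. Since $\vartheta$ is supported in a fixed compact interval $J$ contained in $(0,\infty)$ or $(-\infty,0)$, the function $|v|^{r'}$ is smooth on a neighborhood of $J$. Its derivatives are $\phi'(v)=A\pm Br'|v|^{r'-1}$ and $\phi''(v)=Br'(r'-1)|v|^{r'-2}$. When $r'<2$ we have $|\phi''|\asymp|B|$ on $J$, with constants depending only on $r'$ and $J$. When $r'=2$ the second derivative is exactly $2B$. If $|A|\le C$, the trivial bound $\|\vartheta\|_1$ already gives the estimate, so we may assume $|A|$ is large.

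\emph{Case $|B|\le c|A|$, with $c$ small depending on $J$ and $r'$.} Here $|\phi'(v)|\ge|A|-r'|B|\sup_J|v|^{r'-1}\ge|A|/2$ on $J$. Since $\phi'$ is monotone on $J$ (because $\phi''$ has a fixed sign), the van der Corput lemma for first derivatives, with the amplitude handled by integration by parts against $\vartheta'$, gives the bound $C|A|^{-1}(\|\vartheta\|_\infty+\|\vartheta'\|_1)$. This is better than $(1+|A|)^{-1/2}$.

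\emph{Case $|B|\ge c|A|$.} Then $|\phi''|\gtrsim|B|\gtrsim|A|$ on $J$. The second-derivative van der Corput lemma, again with amplitude via the standard integration by parts, gives $C|B|^{-1/2}(\|\vartheta\|_\infty+\|\vartheta'\|_1)\lesssim|A|^{-1/2}$.

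Combining the two cases yields \eqref{eq:mixed-linear-power}. All constants depend only on $r'$, on $J$, and on $\|\vartheta\|_{C^1}$, which gives the claimed uniformity over families of amplitudes.

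The only point needing care is the endpoint $r'=2$, together with the uniformity of the lower bound on $|\phi''|$ as $r'$ varies. Since $r$ is fixed, uniformity in $r'$ is not required. Because $J$ stays a fixed distance away from $0$, the factor $|v|^{r'-2}$ is bounded above and below. No step is a genuine obstacle; the main thing is to choose the threshold constant $c$ so that the first case has $|\phi'|\gtrsim|A|$.
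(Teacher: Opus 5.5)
Your proposal is correct and matches the paper's proof essentially step for step. You use the trivial bound for bounded $|A|$, then split into $|B|\le c|A|$ (where $|\phi'|\ge|A|/2$ and integration by parts give $|A|^{-1}$) and $|B|>c|A|$ (where $|\phi''|\gtrsim|B|\gtrsim|A|$ and the second-derivative van der Corput estimate give $|A|^{-1/2}$); your remarks on $r'=2$ and on constants depending only on the support interval and $\|\vartheta\|_{C^1}$ correctly account for the stated uniformity.
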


\begin{proof}
For $|A|\le1$, the estimate is trivial. Suppose $|A|>1$.
If $|B|\le c|A|$, with $c>0$ sufficiently small depending only on the
support of $\vartheta$, then
\[
 \left|
 A+Br'\sgn(v)|v|^{r'-1}
 \right|
 \ge \frac{|A|}{2}
\]
on the support. Integration by parts gives $C|A|^{-1}$.
If $|B|>c|A|$, then
\[
 \left|
 \frac{d^2}{dv^2}(Av+B|v|^{r'})
 \right|
 =
 |B|r'(r'-1)|v|^{r'-2}
 \gtrsim |A|,
\]
and the second derivative estimate gives $C|A|^{-1/2}$.
This proves \eqref{eq:mixed-linear-power}.
\end{proof}

We now reduce \eqref{reduced-oio2} to a fixed-height estimate. Fix a
sector $(S,k)$, put $J_R=R\prod_{i=1}^n\omega_i$, and set
$g(\eta)=\widehat f(\mathfrak D\eta)$. After linearizing the parameter
supremum by an arbitrary measurable function $t=t(x)\in[1,2]^d$, fix
$x_d=c$ and write $t(x',c)=t$. The contribution of the sector
$(S,k)$ in \eqref{reduced-oio2} is then
\begin{equation}
 C_{R,\delta,S,k}J_R
 \int_{\R}
 t_d(x',c)^{-r'}
 k_R\left(\frac{s}{t_d(x',c)^{r'}}\right)
 \mathcal T_cg(x',s)\,ds,
\end{equation}
where
\begin{align}
 \mathcal T_cg(x',s)
 =
 \int_{\R^d}
 &e^{i\left[
 \sum_{i=1}^n\omega_i x_i\eta_i
 +Rc\eta_d
 +s|\eta_d|^{r'}
 +s\sum_{i\in S}
 \delta_i^r
 \left(\frac{t_i(x',c)}{t_d(x',c)}\right)^{r'}
 |\eta_i|^{r'}
 \right]}
 \nonumber\\
 &\times
 a_{R,\delta,S,k}(t(x',c),\eta)
 \psi(\eta_d)g(\eta)\,d\eta.
\end{align}
Since $t_d\in[1,2]$,
$\|t_d^{-r'}k_R(\,\cdot\,/t_d^{r'})\|_2
=t_d^{-r'/2}\|k_R\|_2\lesssim1$. Hence Cauchy--Schwarz in $s$ gives
\begin{equation}
 \left|
 \int_{\R}
 t_d^{-r'}k_R\left(\frac{s}{t_d^{r'}}\right)
 \mathcal T_cg(x',s)\,ds
 \right|
 \lesssim
 \|\mathcal T_cg(x',\cdot)\|_{L_s^2}.
\end{equation}
It therefore remains to estimate $\mathcal T_c$ uniformly in $c$ and
in the measurable selector $t(x',c)$. No spatial derivatives of this
selector will be used.

\begin{lemma}\label{lem:fixed-height-L4}
For some finite $M=M(d,r)$,
\begin{equation}\label{eq:fixed-height-T}
 \|\mathcal T_cg\|_{L^4_{x'}L^2_s}
 \lesssim
 \mathfrak B_M
 \prod_{i=1}^n\omega_i^{-1/4}
 \|g\|_2,
\end{equation}
uniformly in $c$, $R$, $\delta$, $S$, $k$, and the measurable function
$t=t(x',c)$.
\end{lemma}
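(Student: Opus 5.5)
The plan is a $TT^*$ argument in the tangential variables $x'$ only, with the height variable $s$ kept as an $L^2$-valued fibre. Regard $\mathcal T_c$ as a map $L^2_\eta\to L^4_{x'}(L^2_s)$. It suffices to show that $\mathcal T_c\mathcal T_c^*$ maps $L^{4/3}_{x'}(L^2_s)\to L^4_{x'}(L^2_s)$ with norm $\lesssim\mathfrak B_M^2\prod_{i=1}^n\omega_i^{-1/2}$. The operator $\mathcal T_c\mathcal T_c^*$ has an operator-valued kernel $\mathcal K(x',y')\colon L^2_s\to L^2_s$. The key claim is
\[
 \|\mathcal K(x',y')\|_{L^2_s\to L^2_s}
 \lesssim \mathfrak B_M^2\prod_{i=1}^n\bigl(1+\omega_i|x_i-y_i|\bigr)^{-1/2}.
\]
Given this, the vector-valued one-dimensional Hardy--Littlewood--Sobolev inequality applies in each coordinate. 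The kernel $(1+\omega|u|)^{-1/2}\le\omega^{-1/2}|u|^{-1/2}$ has weak-$L^2$ norm $\lesssim\omega^{-1/2}$, so it maps $L^{4/3}(\R)\to L^4(\R)$ with norm $\lesssim\omega^{-1/2}$. Iterating over the $n$ coordinates gives the $TT^*$ bound, and hence \eqref{eq:fixed-height-T}.

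To compute $\mathcal K$, write $\tau_i(x')=(t_i(x',c)/t_d(x',c))^{r'}$ and
\[
 \Phi_{x'}(\eta)=|\eta_d|^{r'}+\sum_{i\in S}\delta_i^r\tau_i(x')|\eta_i|^{r'}.
\]
The term $Rc\eta_d$ cancels in $\mathcal T_c\mathcal T_c^*$, so one obtains
\[
 \mathcal K(x',y')h(s)=\int e^{is\Phi_{x'}(\eta)}e^{i\sum_i\omega_i(x_i-y_i)\eta_i}\,b(\eta)\,\widehat h(\Phi_{y'}(\eta))\,d\eta .
\]
Here $b=a(t(x'),\eta)\overline{a(t(y'),\eta)}\psi(\eta_d)^2$, and $\widehat h$ is the one-dimensional Fourier transform, suitably normalized. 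For fixed $\eta'$, change variables $\eta_d\mapsto w=\Phi_{y'}(\eta)$. On $\supp\psi$ we have $\eta_d\sim1$ with a fixed sign, so the Jacobian is $\asymp1$ and smooth. The essential point is that the $|\eta_d|^{r'}$ terms cancel exactly:
\[
 \Phi_{x'}(\eta)=w+\sum_{i\in S}\delta_i^r\bigl(\tau_i(x')-\tau_i(y')\bigr)|\eta_i|^{r'} .
\]
Therefore
\[
 \mathcal K h(s)=\int e^{isw}\,\widehat h(w)\,m(s,w)\,dw,
\]
where
\[
 m(s,w)=\int e^{i[\sum_i\omega_i(x_i-y_i)\eta_i+s\sum_{i\in S}B_i|\eta_i|^{r'}]}\,\tilde b(\eta',w)\,d\eta',
 \qquad B_i=\delta_i^r(\tau_i(x')-\tau_i(y')).
\]

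Next, separate variables in the amplitude. Since $\tilde b$ is supported in a fixed box with $C^N$ norms $\lesssim\mathfrak B_M^2$ by \eqref{eq:local-sector-amplitude}, expand it in a Fourier series on a slightly larger box:
\[
 \tilde b(\eta',w)=\sum_{\ell}c_\ell\,e^{i\ell'\cdot\eta'}e^{i\ell_dw}\prod_i\vartheta_i(\eta_i),
 \qquad |c_\ell|\lesssim_N\mathfrak B_M^2(1+|\ell|)^{-N}.
\]
The cutoffs $\vartheta_i$ are adapted to the fixed support. For $i\in S$ the cutoff can be taken supported in a fixed compact subinterval of $(0,\infty)$, since there $\eta_i\sim1$. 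For each $\ell$ the $\eta'$-integral factors into one-dimensional integrals of the form in Lemma~\ref{lem:mixed-linear-power}, with $A=\omega_i(x_i-y_i)+\ell_i$. For $i\in S$ we take $B=sB_i$. For $i\notin S$ we have $B=0$, so there the sign condition is not needed and plain integration by parts suffices.

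This gives the bound
\[
 |m_\ell(s)|\lesssim\prod_i\bigl(1+|\omega_i(x_i-y_i)+\ell_i|\bigr)^{-1/2},
\]
uniformly in $s$ and in the selector values, since Lemma~\ref{lem:mixed-linear-power} is uniform in $B$. Each $\ell$-term of $\mathcal Kh$ is then the product of $m_\ell(s)$ with a translate of $h$, so its $L^2_s$ norm is at most $\sup_s|m_\ell|\,\|h\|_2$. Summing over $\ell$ with Peetre's inequality $(1+|A+\ell_i|)^{-1/2}\le(1+|\ell_i|)^{1/2}(1+|A|)^{-1/2}$ and the rapid decay of $c_\ell$ yields the key claim.

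I expect the main obstacle to be the $s$-fibre. A direct pointwise kernel bound in $(s,s')$ only gives $(1+|s-s'|)^{-1/2}$, which is not integrable, so Schur's test fails. The change of variables $w=\Phi_{y'}(\eta)$ is what turns the $s$-dependence into Plancherel plus a bounded multiplier, and it relies on the exact cancellation of the $\eta_d$-phase. One must also check that the amplitude and Jacobian after this substitution keep uniform $C^N$ bounds, independent of $\delta$, $S$, $k$ and the measurable selector, and that no derivatives in $x'$ of $t(x',c)$ are used anywhere. In the argument above this holds, because the selector enters only through the parameters $B_i$ and through the amplitude at the frozen points $x'$ and $y'$.
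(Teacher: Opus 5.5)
Your proof is correct, and most of it matches the paper's proof. The shared steps are: $TT^*$ in $x'$ with $s$ carried as an $L^2$ fibre; Lemma~\ref{lem:mixed-linear-power}, which is uniform in the coefficient of $|\eta_i|^{r'}$, for $i\in S$; integration by parts for $i\notin S$; the iterated one-dimensional Hardy--Littlewood--Sobolev inequality; and no derivatives of the selector.

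The one real difference is how you handle $s$, and the obstacle you describe there does not exist. In the kernel of $\mathcal T_c\mathcal T_c^*$ the $\eta_d$-phase is $(s-s')|\eta_d|^{r'}$, since the terms $Rc\eta_d$ cancel. On $\supp\psi$ one has $1/2\le|\eta_d|\le2$, so the $\eta_d$-derivative of this phase has size comparable to $|s-s'|$. The $\eta_d$-integral is therefore nonstationary, not merely nondegenerate, and the van der Corput bound $(1+|s-s'|)^{-1/2}$ is far from sharp. Separate variables as you do, absorbing the bounded modulations $\ell_d\eta_d$ into the decay of the Fourier coefficients. Integrating by parts in $\eta_d$ then gives a factor $(1+|s-s'|)^{-N}$ in a pointwise kernel bound. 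The paper then applies Young's inequality in $s$ before the product Hardy--Littlewood--Sobolev step, so the Schur-type argument does not fail.

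Your substitution $w=\Phi_{y'}(\eta)$ followed by Plancherel is a valid replacement. The exact cancellation of $|\eta_d|^{r'}$ and the uniform smoothness of the inverse map $\eta_d(\eta',w)$ hold as you say. But it rests on the same input, $|\partial_{\eta_d}\Phi_{y'}|\asymp1$, so it gains no generality and adds Jacobian and inverse-map bookkeeping.

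One small fix in your version: because $\widehat h(w)$ is defined for all $w$, the Fourier expansion of $\tilde b$ also needs a cutoff $\vartheta_d(w)$. The $w$-integral is then, up to a constant, a translate of $h*\check\vartheta_d$ rather than of $h$. It is still bounded on $L^2_s$ by a multiple of $\|h\|_2$, so the argument goes through.
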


\begin{proof}
After a finite smooth refinement of the frequency sector, we may assume
that $|\eta_d|\asymp1$ and, for each $i\in S$, $|\eta_i|\asymp1$ with
a fixed sign on the support. The resulting amplitudes have uniformly
bounded $C^N$ norm controlled by $\mathfrak B_M$.

The kernel of $\mathcal T_c\mathcal T_c^*$ is
\begin{align}
 K_c(x',s;z',u)
 =
 \int_{\R^d}
 &e^{i\left[
 \sum_{i=1}^n\omega_i(x_i-z_i)\eta_i
 +(s-u)|\eta_d|^{r'}
 \right]}
 \nonumber\\
 &\times
 e^{i\sum_{i\in S}\delta_i^r
 \left[
 s\left(\frac{t_i(x',c)}{t_d(x',c)}\right)^{r'}
 -
 u\left(\frac{t_i(z',c)}{t_d(z',c)}\right)^{r'}
 \right]|\eta_i|^{r'}}
 \nonumber\\
 &\times
 a_{R,\delta,S,k}(t(x',c),\eta)
 \overline{a_{R,\delta,S,k}(t(z',c),\eta)}
 |\psi(\eta_d)|^2\dd\eta.
\end{align}
The factors $e^{iRc\eta_d}$ cancel exactly.

Since the phase is separated in the variables $\eta_i$, the
one-dimensional estimates may be applied successively. Indeed, after
integrating in one variable, derivatives in the remaining variables
pass through that integral, so the resulting amplitude satisfies the
same type of bounds, with higher mixed derivatives of the original
amplitude. Since the mixed $C^N$ norms of
$a_{R,\delta,S,k}$ are uniformly bounded, iterating the
one-dimensional estimates gives
\begin{align}
 |K_c(x',s;z',u)|
 \lesssim_N{}&
 \mathfrak B_M^2
 (1+|s-u|)^{-N}
 \prod_{i\in S}
 (1+\omega_i|x_i-z_i|)^{-1/2}
 \nonumber\\
 &\times
 \prod_{i\notin S}
 (1+\omega_i|x_i-z_i|)^{-N}.
\end{align}
Consequently,
\begin{equation}
 |K_c(x',s;z',u)|
 \lesssim
 \mathfrak B_M^2
 (1+|s-u|)^{-N}
 \prod_{i=1}^n
 \omega_i^{-1/2}|x_i-z_i|^{-1/2}.
\end{equation}

For $H\in L^{4/3}_{z'}L^2_u$, it follows that
\begin{align*}
 |\mathcal T_c\mathcal T_c^*H(x',s)|
 &\lesssim
 \mathfrak B_M^2
 \prod_{i=1}^n\omega_i^{-1/2}\\
 &\quad\times
 \int_{\R^n}
 \prod_{i=1}^n|x_i-z_i|^{-1/2}
 \int_{\R}
 (1+|s-u|)^{-N}|H(z',u)|\dd u\dd z'.
\end{align*}
Young's inequality in the $s$-variable gives
\begin{align}
 \|\mathcal T_c\mathcal T_c^*H(x',\cdot)\|_{L^2_s}
 \lesssim
 \mathfrak B_M^2
 \prod_{i=1}^n\omega_i^{-1/2}
 \int_{\R^n}
 \prod_{i=1}^n|x_i-z_i|^{-1/2}
 \|H(z',\cdot)\|_{L^2_u}\dd z'.
\end{align}
The one-dimensional Hardy--Littlewood--Sobolev inequality gives
\[
 \left\|
 \int_{\R}
 |x_i-z_i|^{-1/2}F(z_i)\dd z_i
 \right\|_{L^4_{x_i}}
 \lesssim
 \|F\|_{L^{4/3}_{z_i}}.
\]
Iterating this estimate in $i=1,\ldots,n$, using Minkowski's inequality
between successive applications (with $4/3\le4$), yields
\begin{equation}
 \left\|
 \int_{\R^n}
 \prod_{i=1}^n|x_i-z_i|^{-1/2}F(z')\dd z'
 \right\|_{L^4_{x'}}
 \lesssim
 \|F\|_{L^{4/3}_{z'}}.
\end{equation}
Therefore
\[
 \|\mathcal T_c\mathcal T_c^*H\|_{L^4_{x'}L^2_s}
 \lesssim
 \mathfrak B_M^2
 \prod_{i=1}^n\omega_i^{-1/2}
 \|H\|_{L^{4/3}_{z'}L^2_u}.
\]
By duality,
\[
 \|\mathcal T_cg\|_{L^4_{x'}L^2_s}
 \lesssim
 \mathfrak B_M
 \prod_{i=1}^n\omega_i^{-1/4}
 \|g\|_2,
\]
which proves \eqref{eq:fixed-height-T}.
\end{proof}

\subsection{Full spatial \texorpdfstring{$TT^*$}{TT*} estimates}

For the $L^{2d}$ estimate when $d\ge4$, and the $L^{12}$ estimate
when $d=3$, we retain all spatial variables.
We linearize the parameter supremum by an arbitrary measurable function
$t=t(x)\in[1,2]^d$ and write
\begin{align}
 \mathcal R_{R,\delta,S,k}g(x)
 =
 R\int_{\R}k_R(R\sigma)
 \int_{\R^d}
 e^{i(\mathfrak D x\cdot\eta+R\sigma Q_{t(x)}(\eta))}
 a_{R,\delta,S,k}(t(x),\eta)\psi(\eta_d)g(\eta)
 \,d\eta d\sigma,
\end{align}
where
\[
 Q_t(\eta)=t_d^{r'}|\eta_d|^{r'}+
 \sum_{i\in S}\delta_i^rt_i^{r'}|\eta_i|^{r'}.
\]
Thus \eqref{reduced-oio-2d} is
\[
 C_{R,\delta,S,k}\left(R\prod_{i=1}^n\omega_i\right)
 \mathcal R_{R,\delta,S,k}g,
\]
with $g(\eta)=\widehat f(\mathfrak D\eta)$.

\begin{lemma}\label{lem:sector-kernel}
Let $d\ge3$ and let $K(x,y)$ be the kernel of
$\mathcal R_{R,\delta,S,k}\mathcal R_{R,\delta,S,k}^*$.
Put $h=x-y$, $\omega_d=R$, and $\mathcal A=S\cup\{d\}$.
For some finite $M=M(d,r)$, uniformly in $R$, $\delta$, $S$, $k$,
and the measurable selector $t=t(x)$,
\begin{align}\label{eq:L2d-kernel-before-HLS}
 |K(x,y)|
 \lesssim
 \mathfrak B_M^2 R
 \prod_{j\in\mathcal A}
 \omega_j^{-1/d}
 (1+\omega_j|h_j|)^{-(d-2)/(2d)}
 \prod_{\substack{j\le n\\j\notin S}}
 (1+\omega_j|h_j|)^{-2}.
\end{align}
\end{lemma}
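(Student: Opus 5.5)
The plan is to write out the kernel of $\mathcal R\mathcal R^*$ explicitly, to exploit that the $\eta$-phase separates coordinate by coordinate, and then to integrate the resulting one-dimensional bounds against the scalar weights in $(\sigma,\tau)$. Writing $\mathcal R=\mathcal R_{R,\delta,S,k}$ and $h=x-y$, we have
\[
 K(x,y)=R^2\iint k_R(R\sigma)\overline{k_R(R\tau)}
 \int_{\R^d}e^{i\Psi(\eta)}
 a(t(x),\eta)\overline{a(t(y),\eta)}|\psi(\eta_d)|^2\dd\eta\,\dd\sigma\dd\tau,
\]
where
\[
 \Psi(\eta)=\sum_{j=1}^d\omega_jh_j\eta_j
 +\sum_{j\in\mathcal A}\omega_jL_j(\sigma,\tau)|\eta_j|^{r'},
 \qquad
 L_j=\sigma t_j(x)^{r'}-\tau t_j(y)^{r'}.
\]
This uses $\omega_j=R\delta_j^r$ for $j\in S$ (since $k_j=0$ and $\lambda_j=R\delta_j^r$ there) and $\omega_d=R$. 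As in the proof of Lemma~\ref{lem:fixed-height-L4}, I first refine the sector finitely so that $|\eta_j|\asymp1$ with a fixed sign for each $j\in\mathcal A$.

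The $\eta$-integral is then estimated by iterated one-dimensional bounds, with derivatives in the remaining variables passing through each completed integral; this is controlled by $\mathfrak B_M^2$. For $j\le n$, $j\notin S$, the phase in $\eta_j$ is linear. Two integrations by parts, using the uniform $C^N$ bounds \eqref{eq:local-sector-amplitude} on the fixed box, give $(1+\omega_j|h_j|)^{-2}$. For $j\in\mathcal A$, Lemma~\ref{lem:mixed-linear-power} with $A=\omega_jh_j$ and $B=\omega_jL_j$ gives $(1+\omega_j|h_j|)^{-1/2}$. The second-derivative (van der Corput) bound gives $(\omega_j|L_j|)^{-1/2}$, since $|\eta_j|^{r'-2}\asymp1$. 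The trivial bound is $1$. Put $X_j=1+\omega_j|h_j|$ and $Y_j=\omega_j|L_j|$, and split geometrically with $\alpha=(d-2)/(2d)$:
\[
 \min\{1,X_j^{-1/2},Y_j^{-1/2}\}
 \le X_j^{-\alpha}\min\{1,Y_j^{-1/2}\}^{1-2\alpha}
 =X_j^{-(d-2)/(2d)}\min\{1,Y_j^{-1/d}\}.
\]
This already produces the $h$-dependence in \eqref{eq:L2d-kernel-before-HLS}.

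It remains to show
\[
 R^2\iint|k_R(R\sigma)||k_R(R\tau)|
 \prod_{j\in\mathcal A}\min\{1,(\omega_j|L_j|)^{-1/d}\}\dd\sigma\dd\tau
 \lesssim R\prod_{j\in\mathcal A}\omega_j^{-1/d}.
\]
By Lemma~\ref{lem:scalar-kernel}, the weight satisfies
\[
 R^2|k_R(R\sigma)||k_R(R\tau)|\lesssim R(1+|\sigma|)^{-N}(1+|\tau|)^{-N},
\]
which supplies the factor $R$. For the product I apply H\"older in $(\sigma,\tau)$ with exponent $|\mathcal A|$ to each factor. Each coefficient satisfies $t_j^{r'}\in[1,2^{r'}]$, so every $L_j$ is a nondegenerate linear form and each factor costs
\[
 \Bigl(\iint\min\{1,(\omega_j|L_j|)^{-|\mathcal A|/d}\}\,w\Bigr)^{1/|\mathcal A|}.
\]
When $|\mathcal A|<d$ the exponent $|\mathcal A|/d<1$ is integrable near the line $L_j=0$ and gives exactly $\omega_j^{-1/d}$.

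The main obstacle is the endpoint $|\mathcal A|=d$, i.e.\ $S=\{1,\dots,n\}$. There the H\"older exponent equals $1$ and each factor produces $\omega_j^{-1}\log(2+\omega_j)$. This loss comes from lines $L_j=0$ that may coincide when $t(x)/t(y)$ is proportional across coordinates, so it cannot be excluded by generic position. To remove the logarithm I would try, first, to exploit that the factors come from an integral of a product rather than a product of worst cases. On the region where all $Y_j\ge1$, I would not take the minimum factor by factor but estimate the full $\eta_{\mathcal A}$-integral jointly. Its phase has Hessian $\diag(\omega_jL_j)$, so joint stationary phase gives $\prod_j(\omega_j|L_j|)^{-1/2}$. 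I would then use part of the $X_d$-decay in the $\eta_d$ factor, where $\omega_d=R$ is largest, to gain $\varepsilon$ in the $Y$-exponent of one factor while keeping the stated powers of $\omega_j$; this leaves room because $\omega_j\le R$.

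If that fails, the fallback is to accept the logarithm in \eqref{eq:L2d-kernel-before-HLS}. It is harmless downstream, since the dyadic summation in Section~\ref{sec:frequency-localized} already tolerates $(1+\log R)$ factors with strict inequalities. Finally, uniformity in the measurable selector holds because only the values $t(x)$ and $t(y)$ enter $L_j$ and the amplitudes, and no derivatives of $t$ are used.
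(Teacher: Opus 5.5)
Your argument is correct when $|\mathcal A|<d$. It does not prove \eqref{eq:L2d-kernel-before-HLS} when $|\mathcal A|=d$, i.e.\ $S=\{1,\dots,n\}$. This case genuinely occurs: it is the fully stationary sector when all horizontal coordinates are active.

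The logarithm is built into your one-dimensional bound. The square $\min\{1,(\omega_j|L_j|)^{-1/2}\}^2$ integrates in $\sigma$ to $\omega_j^{-1}\log(2+\omega_j)$, and no reallocation of exponents removes this without touching the $h$-decay. Indeed, a bound $\min\{1,X^{-1/2},Y^{-1/2}\}\le X^{-\alpha}\max\{1,Y\}^{-\beta}$ valid for all $X\ge1$, $Y\ge0$ forces $\alpha+\beta\le1/2$ (take $Y=X$). Keeping $\alpha=(d-2)/(2d)$ therefore leaves exactly $\beta=1/d$. Joint stationary phase only reproduces the product of these one-dimensional bounds.

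Your first remedy thus necessarily lowers the exponent of $(1+R|h_d|)$ below $(d-2)/(2d)$, which is not the stated bound. Moreover, for $d=3,4$ the subsequent one-dimensional HLS steps use exactly this exponent with no room to spare. Your fallback proves only a weaker kernel bound with a $\log R$ loss. That would suffice for the open exponent ranges later, but it is not the lemma.

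The missing idea is to control the $\sigma$-dependence in $L^2_\sigma$ rather than pointwise. The paper first expands $A_{x,y}$ in a Fourier series on a fixed box, with $\sum_\ell|c_\ell|\prod_j(1+|\ell_j|)^2\lesssim\mathfrak B_M^2$. This makes the $\eta$-integral an exact product of one-dimensional integrals $J_{j,\ell_j}(\sigma,\rho)$.

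For $j\in\mathcal A$, substitute $v=|\eta|^{r'}$ on the fixed-sign support. Then $e^{i\omega_j\sigma t_j(x)^{r'}v}$ is the only $\sigma$-dependent factor, so Plancherel gives $\int|J_{j,\ell_j}|^2\,d\sigma\lesssim\omega_j^{-1}$ exactly, uniformly in $\rho,x,y,\ell_j$. Interpolate with the $\sigma$-uniform bound $(1+|\ell_j|)(1+\omega_j|h_j|)^{-1/2}$ from Lemma~\ref{lem:mixed-linear-power}. This gives $(\int w|J_{j,\ell_j}|^d\,d\sigma)^{1/d}\lesssim(1+|\ell_j|)\,\omega_j^{-1/d}(1+\omega_j|h_j|)^{-(d-2)/(2d)}$.

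Now apply H\"older in $\sigma$ alone, with $\rho$ fixed. Use exponent $d$ for each of the $|\mathcal A|\le d$ factors and put $\|w\|_1^{1-|\mathcal A|/d}$ on the remainder. This closes with no logarithm, even when $|\mathcal A|=d$.

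Your iterated integration does not produce factors to which Plancherel applies. If you want to keep it, you can instead sharpen the one-dimensional bound. The critical point lies in the support only when $|L_j|\asymp|h_j|$; otherwise integration by parts gives $(1+\omega_j(|L_j|+|h_j|))^{-N}$. With this refined bound $B_j$ you still have $\sup_\sigma B_j\lesssim(1+\omega_j|h_j|)^{-1/2}$, and now $\int B_j^2\,d\sigma\lesssim\omega_j^{-1}$ without a logarithm. The same H\"older-in-$\sigma$ argument then goes through.
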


\begin{proof}
Put $\omega_d=R$, $h=x-y$, and $\mathcal A=S\cup\{d\}$. Since
$\omega_i=R\delta_i^r=\lambda_i$ for $i\in S$, the kernel of
$\mathcal R_{R,\delta,S,k}\mathcal R_{R,\delta,S,k}^*$ is
\begin{align}
 K(x,y)
 =
 R^2\iint_{\R^2}
 &k_R(R\sigma)\overline{k_R(R\rho)}
 \int_{\R^d}
 e^{i\sum_{j=1}^d\omega_jh_j\eta_j}
 \nonumber\\
 &\times
 e^{i\sum_{j\in\mathcal A}
 \omega_j[\sigma t_j(x)^{r'}-\rho t_j(y)^{r'}]|\eta_j|^{r'}}
 A_{x,y}(\eta)\,d\eta\,d\sigma d\rho ,
\end{align}
where
$A_{x,y}(\eta)=a_{R,\delta,S,k}(t(x),\eta)
\overline{a_{R,\delta,S,k}(t(y),\eta)}|\psi(\eta_d)|^2$.

After a finite smooth refinement, the variables $\eta_j$, $j\in\mathcal A$,
have fixed sign and stay in fixed compact intervals away from zero. Choose
fixed one-dimensional cutoffs $\vartheta_j$ which equal one on the
corresponding frequency supports. Expanding $A_{x,y}$ in a Fourier series
on a fixed box, we may write
\[
 A_{x,y}(\eta)
 =
 \sum_{\ell\in\mathbb Z^d}c_\ell(x,y)
 \prod_{j=1}^d\vartheta_j(\eta_j)e^{i\pi\ell_j\eta_j/L},
 \qquad
 \sum_{\ell}|c_\ell(x,y)|
 \prod_{j=1}^d(1+|\ell_j|)^2
 \lesssim\mathfrak B_M^2.
\]
For $j\in\mathcal A$, set
\[
 J_{j,\ell_j}(\sigma,\rho)
 =
 \int
 e^{i\omega_j\{h_j\eta+
 [\sigma t_j(x)^{r'}-\rho t_j(y)^{r'}]|\eta|^{r'}\}}
 \vartheta_j(\eta)e^{i\pi\ell_j\eta/L}\,d\eta .
\]
Lemma~\ref{lem:mixed-linear-power}, applied with the last exponential
included in the amplitude, gives
\begin{equation}\label{eq:L2d-J-sup}
 \sup_\sigma|J_{j,\ell_j}(\sigma,\rho)|
 \lesssim
 (1+|\ell_j|)
 (1+\omega_j|h_j|)^{-1/2}.
\end{equation}
We also have a scalar $L^2$ estimate. Since $\eta$ has fixed sign, make
the change $v=|\eta|^{r'}$. All factors except
$e^{i\omega_j\sigma t_j(x)^{r'}v}$ are unimodular or fixed amplitudes,
and Plancherel in $\sigma$ gives
\begin{align}\label{eq:L2d-J-L2}
 \int_{\R}|J_{j,\ell_j}(\sigma,\rho)|^2\,d\sigma
 &=
 \frac{C}{\omega_jt_j(x)^{r'}}
 \int
 \frac{|\vartheta_j(\eta)|^2}
 {r'|\eta|^{r'-1}}\,d\eta
 \lesssim\omega_j^{-1}.
\end{align}
In particular, this estimate is uniform in $\rho$, $x$, $y$, and
$\ell_j$.

Let $w(\sigma)=(1+|\sigma|)^{-2}$. Combining
\eqref{eq:L2d-J-sup} and \eqref{eq:L2d-J-L2}, and using $w\le1$, yields
\begin{equation}\label{eq:L2d-J-Ld}
 \left(
 \int_\R w(\sigma)|J_{j,\ell_j}(\sigma,\rho)|^d\,d\sigma
 \right)^{1/d}
 \lesssim
 (1+|\ell_j|)
 \omega_j^{-1/d}
 (1+\omega_j|h_j|)^{-(d-2)/(2d)} .
\end{equation}
For $j\notin\mathcal A$, the corresponding factor contains only the
linear phase $\omega_jh_j\eta$. Two integrations by parts give
$|J_{j,\ell_j}|\lesssim(1+|\ell_j|)^2
(1+\omega_j|h_j|)^{-2}$.

By Lemma~\ref{lem:scalar-kernel},
$R^2|k_R(R\sigma)k_R(R\rho)|\lesssim Rw(\sigma)w(\rho)$.
Since $|\mathcal A|=|S|+1\le d$, H\"older in $\sigma$ with exponent
$d$ for every $j\in\mathcal A$ gives
\[
 \int_\R w(\sigma)\prod_{j\in\mathcal A}|J_{j,\ell_j}|\,d\sigma
 \le
 \|w\|_1^{1-|\mathcal A|/d}
 \prod_{j\in\mathcal A}
 \left(\int_\R w(\sigma)|J_{j,\ell_j}|^d\,d\sigma\right)^{1/d}.
\]
Using \eqref{eq:L2d-J-Ld}, integrating $w(\rho)$, and summing the
Fourier series proves \eqref{eq:L2d-kernel-before-HLS}.
\end{proof}

\subsubsection{The \texorpdfstring{$L^2$}{L2} to \texorpdfstring{$L^{2d}$}{L(2d)} estimate for \texorpdfstring{$d\ge4$}{d >= 4}}

\begin{lemma}\label{lem:sector-L2d}
Let $d\ge4$. For some finite $M=M(d,r)$,
\begin{equation}\label{eq:sector-L2d}
 \|\mathcal R_{R,\delta,S,k}g\|_{L^{2d}(\R^d)}
 \lesssim
 \mathfrak B_M
 R^{1/2-1/d}
 \prod_{i\in S}\lambda_i^{-1/d}
 \prod_{\substack{i\le n\\i\notin S}}
 \omega_i^{-1/(2d)}
 \|g\|_2,
\end{equation}
uniformly in $R$, $\delta$, $S$, $k$, and the measurable function
$t=t(x)$.
\end{lemma}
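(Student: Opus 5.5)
The plan is a standard $TT^*$ argument built on the kernel bound of Lemma~\ref{lem:sector-kernel}. By duality, \eqref{eq:sector-L2d} is equivalent to
\[
 \|\mathcal R\mathcal R^*H\|_{L^{2d}}\lesssim \mathfrak B_M^2 R^{1-2/d}\prod_{i\in S}\lambda_i^{-2/d}\prod_{i\le n,\,i\notin S}\omega_i^{-1/d}\|H\|_{L^{(2d)'}},
\]
where $\mathcal R=\mathcal R_{R,\delta,S,k}$ and $(2d)'=2d/(2d-1)$. I will bound $|\mathcal R\mathcal R^*H|$ pointwise by $\int|K(x,y)||H(y)|\,dy$ and use \eqref{eq:L2d-kernel-before-HLS}. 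The kernel bound is a product of one-dimensional factors in $h_j=x_j-y_j$. This reduces the problem to one-dimensional convolution estimates in each coordinate, which I will iterate with Minkowski's inequality, as in the proof of Lemma~\ref{lem:fixed-height-L4}. The iteration is legitimate because $(2d)'\le 2d$.

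For the coordinates $j\in\mathcal A=S\cup\{d\}$, the factor is $\omega_j^{-1/d}(1+\omega_j|h_j|)^{-(d-2)/(2d)}$. I bound it by $\omega_j^{-1/d}(\omega_j|h_j|)^{-(d-2)/(2d)}=\omega_j^{-1/2}|h_j|^{-(d-2)/(2d)}$. The exponent $\gamma=(d-2)/(2d)$ is exactly the one-dimensional Hardy--Littlewood--Sobolev exponent from $L^{(2d)'}$ to $L^{2d}$. Indeed, HLS in one dimension requires $1-\gamma=1/(2d)'-1/(2d)=1-1/d$, i.e.\ $\gamma=1/d$. That does not match, so this reading must be corrected, and the correct bookkeeping is as follows.

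I will instead use Young's inequality with the truncated kernel $(1+\omega_j|h_j|)^{-\gamma}$ in $L^{s}$, where $1+1/(2d)=1/(2d)'+1/s$, so $s=d/(d-1)\cdot\frac12$... Since $1/s=1+1/(2d)-1+1/(2d)=1/d$, we get $s=d$. The kernel $(1+\omega|h|)^{-\gamma}$ lies in $L^d$ iff $\gamma d>1$, i.e.\ $(d-2)/2>1$, i.e.\ $d>4$. For $d=4$ this fails only logarithmically. I therefore expect to interpolate between the two regimes: use HLS with the power $|h_j|^{-1/d}$, which is admissible since $(1+\omega|h|)^{-\gamma}\le(\omega|h|)^{-1/d}$ because $\gamma\ge1/d$ for $d\ge4$. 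That gives coordinate gain $\omega_j^{-1/d}\cdot\omega_j^{-1/d}=\omega_j^{-2/d}$.

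Over $\mathcal A$ this yields $R^{-2/d}\prod_{i\in S}\lambda_i^{-2/d}$. Combined with the prefactor $R$, this gives $R^{1-2/d}\prod_{i\in S}\lambda_i^{-2/d}$, matching the square of the target. For $j\notin\mathcal A$, the factor $(1+\omega_j|h_j|)^{-2}$ has $L^d$ norm $\asymp\omega_j^{-1/d}$, and Young's inequality with exponent $d$ gives $\omega_j^{-1/d}$, again matching. The HLS step uses the one-dimensional inequality $\|\,|\cdot|^{-1/d}*F\|_{L^{2d}}\lesssim\|F\|_{L^{(2d)'}}$, which is valid since $0<1/d<1$ and $1/(2d)'-1/(2d)=1-1/d$.

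The main obstacle is checking the exponent identity: that $\gamma=(d-2)/(2d)\ge1/d$ holds precisely when $d\ge4$. This is where the dimension restriction enters, and it is why $d=3$ requires the separate $L^{12}$ treatment. Everything else is routine: the constants depend only on $\mathfrak B_M$ and are uniform in the selector $t(x)$, since the kernel bound already is.
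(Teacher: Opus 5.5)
Your final argument is correct and is essentially the paper's proof: $TT^*$ with the kernel bound of Lemma~\ref{lem:sector-kernel}, the bound $(1+\omega_j|h_j|)^{-(d-2)/(2d)}\le(\omega_j|h_j|)^{-1/d}$ (valid because $(d-2)/(2d)\ge1/d$ exactly when $d\ge4$), and iterated one-dimensional Hardy--Littlewood--Sobolev with Minkowski, using $(2d)'\le2d$. The only difference is for $j\le n$, $j\notin S$: you apply Young's inequality with $\|(1+\omega_j|\cdot|)^{-2}\|_{L^d}\asymp\omega_j^{-1/d}$, whereas the paper bounds that factor by $\omega_j^{-1/d}|h_j|^{-1/d}$ and uses HLS in all $d$ coordinates; both give the same constant. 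You should delete the two abandoned attempts in your second and third paragraphs (the $|h_j|^{-(d-2)/(2d)}$ bound, and Young's inequality on the $\mathcal A$-factors, which fails at $d=4$).
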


\begin{proof}
Apply Lemma~\ref{lem:sector-kernel}. Since $d\ge4$, $(d-2)/(2d)\ge1/d$.
Hence
\[
 \begin{aligned}
 (1+\omega|h|)^{-(d-2)/(2d)}
 &\lesssim\omega^{-1/d}|h|^{-1/d},\\
 (1+\omega|h|)^{-2}
 &\lesssim\omega^{-1/d}|h|^{-1/d}.
 \end{aligned}
\]
As $\omega_d=R$ and $\omega_i=\lambda_i$ for $i\in S$,
\eqref{eq:L2d-kernel-before-HLS} becomes
\begin{equation}
 |K(x,y)|
 \lesssim
 \mathfrak B_M^2
 R^{1-2/d}
 \prod_{i\in S}\lambda_i^{-2/d}
 \prod_{\substack{i\le n\\i\notin S}}\omega_i^{-1/d}
 \prod_{j=1}^d|x_j-y_j|^{-1/d}.
\end{equation}

The one-dimensional Hardy--Littlewood--Sobolev inequality gives
\[
 \big\|\,|\cdot|^{-1/d}*F\,\big\|_{L^{2d}(\R)}
 \lesssim \|F\|_{L^{(2d)'}(\R)}.
\]
Iterating it in the $d$ coordinates, using Minkowski to reorder the
intermediate mixed norms
because $(2d)'\le2d$, yields
\begin{equation}\label{eq:L2d-product-HLS}
 \left\|
 \int_{\R^d}
 \prod_{j=1}^d|x_j-y_j|^{-1/d}F(y)\,dy
 \right\|_{2d}
 \lesssim
 \|F\|_{(2d)'}.
\end{equation}
Hence, for compactly supported $F$,
\begin{align*}
 \|\mathcal R_{R,\delta,S,k}^*F\|_2^2
 &=
 \langle
 \mathcal R_{R,\delta,S,k}
 \mathcal R_{R,\delta,S,k}^*F,F
 \rangle\\
 &\lesssim
 \mathfrak B_M^2
 R^{1-2/d}
 \prod_{i\in S}\lambda_i^{-2/d}
 \prod_{\substack{i\le n\\i\notin S}}\omega_i^{-1/d}
 \|F\|_{(2d)'}^2.
\end{align*}
Taking square roots and using duality gives
\[
 \|\mathcal R_{R,\delta,S,k}g\|_{2d}
 \lesssim
 \mathfrak B_M
 R^{1/2-1/d}
 \prod_{i\in S}\lambda_i^{-1/d}
 \prod_{\substack{i\le n\\i\notin S}}\omega_i^{-1/(2d)}
 \|g\|_2,
\]
which proves \eqref{eq:sector-L2d}.
\end{proof}

\subsubsection{The \texorpdfstring{$L^2$}{L2} to \texorpdfstring{$L^{12}$}{L12} estimate in dimension three}

In dimension three the same kernel argument gives a different improving
estimate, which will be used only in conjunction with local smoothing.

\begin{lemma}\label{lem:sector-L12}
Let $d=3$. For some finite $M=M(r)$,
\begin{equation}\label{eq:sector-L12}
 \|\mathcal R_{R,\delta,S,k}g\|_{L^{12}(\R^3)}
 \lesssim
 \mathfrak B_M R^{1/4}
 \prod_{i\in S}\lambda_i^{-1/4}
 \prod_{\substack{i\le2\\i\notin S}}\omega_i^{-1/12}
 \|g\|_2,
\end{equation}
uniformly in $R$, $\delta$, $S$, $k$, and the measurable selector
$t=t(x)$.
\end{lemma}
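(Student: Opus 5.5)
The proof follows that of Lemma~\ref{lem:sector-L2d}, with the one-dimensional Hardy--Littlewood--Sobolev exponent adjusted to the pair $L^{12/11}\to L^{12}$. By the $TT^*$ identity, it suffices to show that $\mathcal R_{R,\delta,S,k}\mathcal R_{R,\delta,S,k}^*$ maps $L^{12/11}(\R^3)$ to $L^{12}(\R^3)$ with norm
\[
 \lesssim \mathfrak B_M^2 R^{1/2}\prod_{i\in S}\lambda_i^{-1/2}\prod_{\substack{i\le2\\ i\notin S}}\omega_i^{-1/6}.
\]
Taking square roots then gives \eqref{eq:sector-L12}. The input is the kernel bound \eqref{eq:L2d-kernel-before-HLS} of Lemma~\ref{lem:sector-kernel} with $d=3$. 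For $j\in\mathcal A=S\cup\{3\}$ it gives factors $\omega_j^{-1/3}(1+\omega_j|h_j|)^{-1/6}$, and for $j\le2$, $j\notin S$, it gives factors $(1+\omega_j|h_j|)^{-2}$, all multiplied by $R$.

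Next, convert each factor into a pure power $|h_j|^{-1/6}$. The exponent $1/6$ is forced: the one-dimensional kernel $|h|^{-\gamma}$ maps $L^{12/11}(\R)\to L^{12}(\R)$ exactly when $1-\gamma=11/12-1/12$, that is, $\gamma=1/6$.
\begin{itemize}
\item For $j\in\mathcal A$, use $(1+\omega_j|h_j|)^{-1/6}\le\omega_j^{-1/6}|h_j|^{-1/6}$. The factor becomes $\omega_j^{-1/2}|h_j|^{-1/6}$.
\item For $j\notin\mathcal A$, use $(1+\omega_j|h_j|)^{-2}\le(\omega_j|h_j|)^{-1/6}$.
\end{itemize}
Recall $\omega_3=R$ and $\omega_i=\lambda_i$ for $i\in S$. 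We obtain
\[
 |K(x,y)|\lesssim \mathfrak B_M^2\,R^{1/2}\prod_{i\in S}\lambda_i^{-1/2}\prod_{\substack{i\le2\\ i\notin S}}\omega_i^{-1/6}\prod_{j=1}^3|x_j-y_j|^{-1/6}.
\]

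Finally, iterate the one-dimensional HLS inequality $\||\cdot|^{-1/6}*F\|_{L^{12}(\R)}\lesssim\|F\|_{L^{12/11}(\R)}$ in each of the three coordinates. Between steps, use Minkowski's inequality to reorder the mixed norms, which is allowed since $12/11\le12$. This bounds the product kernel from $L^{12/11}(\R^3)$ to $L^{12}(\R^3)$. Pairing with $F$ gives $\|\mathcal R^*F\|_2^2\lesssim(\text{constant})\|F\|_{12/11}^2$, and duality yields \eqref{eq:sector-L12}. Uniformity in the selector $t(x)$ is inherited from Lemma~\ref{lem:sector-kernel}, since no derivatives of $t$ are used.

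There is no serious obstacle, since the hard work is already in Lemma~\ref{lem:sector-kernel}. The one step to check is the decay exponent on the active factors. When $d=3$ it is only $(d-2)/(2d)=1/6$, below the $1/d=1/3$ used in Lemma~\ref{lem:sector-L2d}. That is why the target is $L^{12}$ rather than $L^6$: the available decay $1/6$ matches the HLS requirement for $L^{12}$.
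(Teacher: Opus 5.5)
Your proposal is correct and follows the paper's proof essentially step for step. Both arguments take the $d=3$ case of the kernel bound in Lemma~\ref{lem:sector-kernel} and convert every factor to $|x_j-y_j|^{-1/6}$, with constant $\mathfrak B_M^2R^{1/2}\prod_{i\in S}\lambda_i^{-1/2}\prod_{i\le2,\,i\notin S}\omega_i^{-1/6}$. They then iterate the one-dimensional Hardy--Littlewood--Sobolev inequality from $L^{12/11}$ to $L^{12}$ with Minkowski, and close with $TT^*$ and duality.

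Your remark that the decay exponent $(d-2)/(2d)=1/6<1/3$ forces $L^{12}$ rather than $L^6$ is also correct. It is exactly why the $L^{2d}$ argument of Lemma~\ref{lem:sector-L2d} does not extend to $d=3$.
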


\begin{proof}
Apply Lemma~\ref{lem:sector-kernel} with $d=3$, and put
$\mathcal A=S\cup\{3\}$. Its kernel bound reads
\[
 |K(x,y)|\lesssim
 \mathfrak B_M^2 R
 \prod_{j\in\mathcal A}
 \omega_j^{-1/3}(1+\omega_j|x_j-y_j|)^{-1/6}
 \prod_{\substack{j\le2\\j\notin S}}
 (1+\omega_j|x_j-y_j|)^{-2}.
\]
Using $(1+\omega|h|)^{-1/6}\le\omega^{-1/6}|h|^{-1/6}$
and the corresponding bound for the factors of exponent $-2$, we get
\[
 |K(x,y)|\lesssim
 \mathfrak B_M^2 R^{1/2}
 \prod_{i\in S}\lambda_i^{-1/2}
 \prod_{\substack{i\le2\\i\notin S}}\omega_i^{-1/6}
 \prod_{j=1}^3|x_j-y_j|^{-1/6}.
\]
The one-dimensional Hardy--Littlewood--Sobolev inequality gives
\[
 \big\|\,|\cdot|^{-1/6}*F\,\big\|_{L^{12}(\R)}
 \lesssim \|F\|_{L^{12/11}(\R)}.
\]
Iteration in the three coordinates, with Minkowski's inequality as
in \eqref{eq:L2d-product-HLS}, yields the corresponding product-kernel
bound from $L^{12/11}(\R^3)$ to $L^{12}(\R^3)$. Applying this bound
to $\mathcal R\mathcal R^*$, then taking square roots and using
duality, proves \eqref{eq:sector-L12}.
\end{proof}

\subsection{Sector summation and spatial localization}

We now restore the sector factors and sum over $(S,k)$. We first
consider the $L^4$ estimate. Fix $x_d=c$ and an arbitrary measurable
selector $t=t(x',c)$. Put $J_R=R\prod_{i=1}^n\omega_i$ and
$g(\eta)=\widehat f(\mathfrak D\eta)$. Applying Cauchy--Schwarz in
the scalar variable and Lemma~\ref{lem:fixed-height-L4} to the reduced
operator gives
\begin{equation}\label{eq:fixed-height-sector}
 \|A_{\delta,S,k}P_R^{(d)}f
 (\cdot,c,t(\cdot,c))\|_{L^4_{x'}}
 \lesssim
 \mathfrak B_M C_{R,\delta,S,k}
 R^{1/2}\prod_{i=1}^n\omega_i^{1/4}\|f\|_2,
\end{equation}
where we used $\|g\|_2=J_R^{-1/2}\|f\|_2$.

Recall that
$C_{R,\delta,S,k}
=
V_\delta
\prod_{i\in S}\lambda_i^{-1/2}
\prod_{\substack{i\in I\setminus S\\k_i=0}}\lambda_i^{-K}
\prod_{i:k_i\ge1}(2^{k_i}\lambda_i)^{-K}$.
Taking $K=1$, an index $i\in S$ contributes
$\lambda_i^{-1/2}\omega_i^{1/4}=\lambda_i^{-1/4}$, while
$i\in I\setminus S$ with $k_i=0$ contributes
$\lambda_i^{-3/4}\le\lambda_i^{-1/4}$. For the remote sectors,
$\sum_{k_i\ge1}(2^{k_i}\lambda_i)^{-3/4}
\lesssim\lambda_i^{-3/4}\le\lambda_i^{-1/4}$, and if $i\notin I$ then
$\lambda_i=1$ and the same sum is uniformly bounded. Hence
\begin{equation}\label{eq:sector-sum-L4}
 \sum_{S,k}
 C_{R,\delta,S,k}\prod_{i=1}^n\omega_i^{1/4}
 \lesssim
 V_\delta\prod_{i=1}^n\lambda_i^{-1/4}.
\end{equation}
Linearizing first over a countable dense subset of $[1,2]^d$ and then
using continuity in $t$, \eqref{eq:fixed-height-sector} and
\eqref{eq:sector-sum-L4} give, uniformly in $c$,
\begin{equation}\label{eq:fixed-height-patch}
 \left\|
 \sup_{t\in[1,2]^d}
 |A_\delta P_R^{(d)}f(\cdot,c,t)|
 \right\|_{L^4_{x'}}
 \lesssim
 \mathfrak B_M
 R^{1/2}V_\delta
 \prod_{i=1}^n\lambda_i^{-1/4}\|f\|_2.
\end{equation}

Before passing from the fixed-height bound to the full $L^4$
estimate, we sum the $L^{2d}$ sectors for $d\ge4$ and the $L^{12}$
sectors for $d=3$. For $d\ge4$ and an arbitrary
measurable selector $t=t(x)$, \eqref{reduced-oio-2d} is
$C_{R,\delta,S,k}J_R\mathcal R_{R,\delta,S,k}g$. Hence
Lemma~\ref{lem:sector-L2d} and $\|g\|_2=J_R^{-1/2}\|f\|_2$ give
\begin{align}\label{eq:L2d-sector-recombined}
 \|A_{\delta,S,k}P_R^{(d)}f(\cdot,t(\cdot))\|_{2d}
 \lesssim{}&
 \mathfrak B_M C_{R,\delta,S,k}
 R^{(d-1)/d}
 \prod_{i\in S}\lambda_i^{1/2-1/d}
 \nonumber\\
 &\times
 \prod_{\substack{i\le n\\i\notin S}}
 \omega_i^{(d-1)/(2d)}
 \|f\|_2.
\end{align}
For $i\in S$, the corresponding factor in
$C_{R,\delta,S,k}$ gives
$\lambda_i^{-1/2}\lambda_i^{1/2-1/d}=\lambda_i^{-1/d}$.
For $i\notin S$, put $\gamma=1-(d-1)/(2d)=(d+1)/(2d)>1/d$.
If $i\in I\setminus S$ and $k_i=0$, its contribution is
$\lambda_i^{-\gamma}\le\lambda_i^{-1/d}$, while
\[
 \sum_{k_i\ge1}
 (2^{k_i}\lambda_i)^{-\gamma}
 \lesssim
 \lambda_i^{-\gamma}
 \le
 \lambda_i^{-1/d}.
\]
Again, when $i\notin I$ one has $\lambda_i=1$, and the corresponding
bounded and remote contributions are uniformly summable. Therefore
\begin{equation}\label{eq:sector-sum-L2d}
 \sum_{S,k}
 C_{R,\delta,S,k}
 \prod_{i\in S}\lambda_i^{1/2-1/d}
 \prod_{\substack{i\le n\\i\notin S}}
 \omega_i^{(d-1)/(2d)}
 \lesssim
 V_\delta\prod_{i=1}^n\lambda_i^{-1/d}.
\end{equation}
Since the selector is arbitrary, \eqref{eq:L2d-sector-recombined} and
\eqref{eq:sector-sum-L2d} yield
\begin{equation}
 \|M_{\loc}^\delta P_R^{(d)}f\|_{2d}
 \lesssim
 \mathfrak B_M
 R^{(d-1)/d}V_\delta
 \prod_{i=1}^n\lambda_i^{-1/d}\|f\|_2.
\end{equation}
This proves \eqref{eq:normalized-L2d-bound}.

For $d=3$, we similarly recombine Lemma~\ref{lem:sector-L12}.
Since $J_R=R\omega_1\omega_2$ and
$\|g\|_2=J_R^{-1/2}\|f\|_2$, it gives
\begin{align*}
 \|A_{\delta,S,k}P_R^{(3)}f(\cdot,t(\cdot))\|_{12}
 \lesssim{}&
 \mathfrak B_M C_{R,\delta,S,k}R^{3/4}
 \prod_{i\in S}\lambda_i^{1/4}
 \prod_{\substack{i\le2\\i\notin S}}\omega_i^{5/12}
 \|f\|_2.
\end{align*}
An active stationary coordinate contributes
$\lambda_i^{-1/2}\lambda_i^{1/4}=\lambda_i^{-1/4}$.
Taking $K=1$ in $C_{R,\delta,S,k}$, a nonstationary active coordinate
with $k_i=0$ contributes
$\lambda_i^{-1}\lambda_i^{5/12}
 =\lambda_i^{-7/12}\le\lambda_i^{-1/4}$.
The remote sectors satisfy
\[
 \sum_{k_i\ge1}(2^{k_i}\lambda_i)^{-7/12}
 \lesssim\lambda_i^{-7/12}\le\lambda_i^{-1/4}.
\]
When $i\notin I$, $\lambda_i=1$ and the corresponding bounded and
remote contributions are again uniformly summable. Consequently,
\begin{equation}
 \|M_{\loc}^\delta P_R^{(3)}f\|_{12}
 \lesssim
 \mathfrak B_M R^{3/4}V_\delta
 (\lambda_1\lambda_2)^{-1/4}\|f\|_2.
\end{equation}
This proves \eqref{eq:normalized-L12-bound}. By
\eqref{eq:exact-spatial-rescaling}, the power of $V_\delta$ becomes
$1+1/12-1/2=7/12$ in the original spatial coordinates, which gives
\eqref{eq:dyadic-2-12}.

It remains to pass from the fixed-height estimate
\eqref{eq:fixed-height-patch} to the full $L^4$ norm. The kernel of
$A_\delta P_R^{(d)}$ is
\begin{equation}
 K_{t,R,\delta}(z)
 =
 V_\delta
 \left(\prod_{i=1}^nt_i^{-1}\right)
 B_\delta\left(\frac{z_1}{t_1},\ldots,\frac{z_n}{t_n}\right)
 R\check\psi\left(
 R\left[
 z_d-t_d\Phi_\delta
 \left(\frac{z_1}{t_1},\ldots,\frac{z_n}{t_n}\right)
 \right]\right).
\end{equation}
Since $B_\delta$ has fixed compact support and $\Phi_\delta$ is
uniformly bounded there, there are fixed $H,C>0$ such that, for every
$N$ and $|z_d|\ge H$,
\[
 \sup_{t\in[1,2]^d}|K_{t,R,\delta}(z)|
 \lesssim_N
 \mathfrak B_0V_\delta R^{1-N}
 {\bf1}_{\{|z'|\le C\}}(1+|z_d|)^{-N}.
\]
Let $E_R$ denote the right-hand side with the additional factor
${\bf1}_{\{|z_d|\ge H\}}$; then
\[
 \|E_R\|_1+\|E_R\|_{4/3}
 \lesssim_N\mathfrak B_0V_\delta R^{1-N}.
\]

Fix $H_1>H$ and set
$f_c(y',y_d)={\bf1}_{\{|y_d-c|\le H_1\}}f(y',y_d)$. At $x_d=c$,
\[
 \sup_t|A_\delta P_R^{(d)}f(x',c,t)|
 \le
 \sup_t|A_\delta P_R^{(d)}f_c(x',c,t)|
 +(E_R*|f|)(x',c).
\]
The fixed-height estimate extends to $L^2$ inputs by density.
Applying \eqref{eq:fixed-height-patch} to $f_c$ gives
\[
 \left\|
 \sup_t|A_\delta P_R^{(d)}f_c(\cdot,c,t)|
 \right\|_{L^4_{x'}}
 \lesssim
 \mathfrak B_M
 R^{1/2}V_\delta
 \prod_{i=1}^n\lambda_i^{-1/4}\|f_c\|_2.
\]
Writing $H_f(v)=\int_{\R^n}|f(y',v)|^2\,dy'$, we have
$\|f_c\|_2^2=({\bf1}_{[-H_1,H_1]}*H_f)(c)$, and Young's inequality gives
\[
 \int_\R\|f_c\|_2^4\,dc
 =
 \|{\bf1}_{[-H_1,H_1]}*H_f\|_2^2
 \lesssim
 \|H_f\|_1^2
 =
 \|f\|_2^4.
\]
Consequently,
\begin{equation}\label{eq:global-L4-near}
 \left(
 \int_\R
 \left\|
 \sup_t|A_\delta P_R^{(d)}f_c(\cdot,c,t)|
 \right\|_{L^4_{x'}}^4\,dc
 \right)^{1/4}
 \lesssim
 \mathfrak B_M
 R^{1/2}V_\delta
 \prod_{i=1}^n\lambda_i^{-1/4}\|f\|_2.
\end{equation}

For the far part, Young's inequality gives
$\|E_R*|f|\|_4\lesssim_N
\mathfrak B_0V_\delta R^{1-N}\|f\|_2$. Since
$\lambda_i\le R$,
$R^{1/2}V_\delta\prod_i\lambda_i^{-1/4}
\ge V_\delta R^{-(d-3)/4}$.
Choosing $N>1+(d-3)/4$ therefore absorbs the far term into
\eqref{eq:global-L4-near}. Hence
\begin{equation}
 \|M_{\loc}^\delta P_R^{(d)}f\|_4
 \lesssim
 \mathfrak B_M
 R^{1/2}V_\delta
 \prod_{i=1}^n\lambda_i^{-1/4}\|f\|_2,
\end{equation}
which proves \eqref{eq:normalized-L4-bound}.

\subsection{The normalized \texorpdfstring{$L^2$}{L2} estimate}
\label{sec:normalized-L2}

We deduce \eqref{eq:normalized-L2-bound} from
\eqref{eq:normalized-L4-bound} by spatial localization. Write the kernel
as $K_{t,R,\delta}=K_{t,R,\delta}^{\mathrm{near}}
+K_{t,R,\delta}^{\mathrm{far}}$, where
$\supp K_{t,R,\delta}^{\mathrm{near}}$ is contained in a fixed bounded
set, uniformly in $t$, $R$, and $\delta$, while
$\sup_t|K_{t,R,\delta}^{\mathrm{far}}|\le E_R$ with
$\|E_R\|_1\lesssim_N\mathfrak B_0V_\delta R^{1-N}$.

Let $\{Q_a\}$ be the unit-cube decomposition of $\R^d$ and choose fixed
enlargements $Q_a^*$ so that
$Q_a-\supp K_{t,R,\delta}^{\mathrm{near}}\subset Q_a^*$ for every $t$.
With $f_a={\bf1}_{Q_a^*}f$, for $x\in Q_a$ we have
\[
 M_{\loc}^\delta P_R^{(d)}f(x)
 \le
 M_{\loc}^\delta P_R^{(d)}f_a(x)
 +(E_R*|f|)(x).
\]
Since $|Q_a|=1$, H\"older's inequality and
\eqref{eq:normalized-L4-bound} give
\[
 \|{\bf1}_{Q_a}M_{\loc}^\delta P_R^{(d)}f_a\|_2
 \lesssim\mathfrak B_M b_{\delta,4}(R)\|f_a\|_2.
\]
The sets $Q_a^*$ have bounded overlap, and hence
\[
 \|M_{\loc}^\delta P_R^{(d)}f\|_2^2
 \lesssim
 \mathfrak B_M^2b_{\delta,4}(R)^2\|f\|_2^2
 +\|E_R*|f|\|_2^2.
\]
By Young's inequality,
$\|E_R*|f|\|_2\lesssim_N
\mathfrak B_0V_\delta R^{1-N}\|f\|_2$. Since
$\lambda_i\le R$, we have
\[
 b_{\delta,4}(R)=R^{1/2}V_\delta\prod_{i=1}^n\lambda_i^{-1/4}
 \ge V_\delta R^{-(d-3)/4}.
\]
Thus, choosing $N>1+(d-3)/4$, the far term is absorbed and
\[
 \|M_{\loc}^\delta P_R^{(d)}f\|_2
 \lesssim
 \mathfrak B_M b_{\delta,4}(R)\|f\|_2.
\]
This proves \eqref{eq:normalized-L2-bound}.

\begin{remark}\label{rem:shifted-quadratic-model}
For comparison, let $n=d-1\ge3$, fix $c\ne0$ and
$b\in C_c^\infty(\mathbb R^n)$, and define
\[
 \mathcal Q_{\loc}f(x)
 =\sup_{t\in[1,2]^d}
 \left|\int_{\mathbb R^n}
 f\bigl(x-(t_1v_1,\ldots,t_nv_n,t_d(c+|v|^2))\bigr)
 b(v)\,dv\right|.
\]
Then
\[
 \|\mathcal Q_{\loc}f\|_q\lesssim_{p,q,b,c}\|f\|_p,
 \qquad p>\frac{n+2}{n},\quad p\le q<np.
\]
To see this, localize $\xi_d$ to $|\xi_d|\asymp R$ and write
$\xi=R\eta$. Completing the square in $v$ and applying
stationary phase gives a multiplier of the form
\[
 R^{-n/2}
 e^{-iRc t_d\eta_d+
 iR\sum_{i=1}^n t_i^2\eta_i^2/(4t_d\eta_d)}a_R(t,\eta),
\]
where $a_R$ has uniformly bounded derivatives on a fixed
frequency box. Frequencies $|\xi'|\gg R$ give rapidly
decreasing nonstationary pieces and can be summed separately.
For each fixed $\eta_d$, the tangential $TT^*$ kernel is bounded
by a constant times
$\prod_{i=1}^n(1+R|x_i-y_i|)^{-1/2}$, uniformly in arbitrary
measurable selectors $t=t(x)$. This follows from the
linear--quadratic version of Lemma~\ref{lem:mixed-linear-power}
in each tangential variable. In the quadratic case the second
derivative is constant, so the same estimate holds for cutoffs
whose support meets zero. The factors involving $x_d$ and $c$ have
modulus one at each fixed $\eta_d$ and do not change these norms.
Product Hardy--Littlewood--Sobolev estimates give tangential
$L^2\to L^4$ and $L^2\to L^{2n}$ norms bounded by
$R^{-n/4}$ and $R^{-1/2}$, respectively. Integrating in the
compact $\eta_d$ interval by Minkowski and Cauchy--Schwarz,
and restoring the factor $R^{d/2-n/2}=R^{1/2}$, gives
fixed-height norms $R^{-(n-2)/4}$ and $1$.
The spatial localization used above therefore gives
\[
 \begin{aligned}
 \|\mathcal Q_{\loc}P_R^{(d)}f\|_2
 +\|\mathcal Q_{\loc}P_R^{(d)}f\|_4
 &\lesssim R^{-(n-2)/4}\|f\|_2,\\
 \|\mathcal Q_{\loc}P_R^{(d)}f\|_{2n}
 &\lesssim\|f\|_2.
 \end{aligned}
\]
The elementary kernel bounds also give the norms $R,R,1$ for
$L^1\to L^1$, $L^1\to L^\infty$, and
$L^\infty\to L^\infty$, respectively.

Set $u=1/p$ and $v=1/q$, and let $a$ denote the exponent of $R$.
Put $\kappa=(n-2)/4$ and $U=n/(n+2)$.
The interpolation points include
\[
 O=(0,0,0),\quad E=(1/2,1/(2n),0),\quad
 A=(1/2,1/4,-\kappa),\quad D=(1/2,1/2,-\kappa),
\]
and $I=(1,0,1)$, $J=(1,1,1)$. With
$\theta=(n-2)/(n+2)$, the convex combinations
\[
 Z=\theta I+(1-\theta)A=(U,U/n,0),\qquad
 Z_D=\theta J+(1-\theta)D=(U,U,0)
\]
have zero frequency exponent. The segments $OE$ and $EZ$
give zero exponents on $v=u/n$ for $0<u<U$; the segments
$OD$ and $DZ_D$ give strictly negative exponents on $v=u$
over the same interval. Interpolation at fixed $u$ gives a
strictly negative exponent whenever $u/n<v\le u$.
Dyadic summation in $R$, together with the low-frequency
kernel envelope, proves the stated sufficient range.
This argument does not determine the boundary $q=np$.
\end{remark}

\section{Exact factorization by iterated stationary phase}
\label{sec:proof-lem-exact}

Throughout this proof we work on the support of $\psi(\xi_d/R)$ and
on the positive frequency branch fixed above. We prove
\eqref{eq:sector-factorization} by eliminating the integration
variables in a specified order. We first eliminate the remote and
active nonstationary variables, then the active stationary variables,
and finally the bounded inactive variables. All auxiliary derivatives
below are taken in the parameters, normalized frequencies, and
remaining integration variables, while the scales are held fixed.

\subsection{Nonstationary variables}

We first eliminate the nonstationary variables. These are either
$i\in I\setminus S$ with $k_i=0$, or $k_i\ge1$. Fix such an index $i$
and put $\varepsilon_i=\delta_i^r$. With the previously eliminated
coordinates omitted, write
\[
 A=1-\sum_{j\ne i}(\delta_jv_j)^r.
\]
We split the phase into the part independent of $v_i$
\[
 \Phi_0
 =
 \sum_{j\ne i}t_j\xi_jv_j+t_d\xi_dA^{1/r}
\]
and
\begin{equation}\label{eq:local-root-difference}
 \Phi_i
 =
 t_i\xi_iv_i
 +
 t_d\xi_d
 \left[
 (A-\varepsilon_iv_i^r)^{1/r}-A^{1/r}
 \right].
\end{equation}
The identity
\begin{equation}\label{eq:local-divided-difference}
 \frac{(A-\varepsilon v^r)^{1/r}-A^{1/r}}{\varepsilon}
 =
 -\frac{v^r}{r}
 \int_0^1
 (A-s\varepsilon v^r)^{1/r-1}\dd s
\end{equation}
shows that every fixed-order derivative of the second term in
\eqref{eq:local-root-difference} is $O(R\delta_i^r)$ on the support.

After a finite smooth refinement of the frequency cutoffs, with the
constants in the stationary and nonstationary sectors chosen uniformly,
the definition of the nonstationary sectors gives
\[
 |\partial_{v_i}\Phi_i|
 \gtrsim
 \begin{cases}
 R\delta_i^r,&i\in I\setminus S,\ k_i=0,\\
 \omega_i,&k_i\ge1,
 \end{cases}
 \qquad
 \omega_i=2^{k_i}\lambda_i.
\]
All derivatives of the corresponding normalized phases are uniformly
bounded. Therefore repeated integration by parts in $v_i$ gives, for
every fixed $K,N'\ge1$,
\[
 \int e^{-i\Phi_i}B_\delta\,\dd v_i
 =
 \begin{cases}
 (R\delta_i^r)^{-K}\widetilde B_\delta,
   &i\in I\setminus S,\ k_i=0,\\[1mm]
 (2^{k_i}\lambda_i)^{-K}\widetilde B_\delta,
   &k_i\ge1,
 \end{cases}
\]
where
\[
 \|\widetilde B_\delta\|_{C^{N'}}
 \le
 C_{K,N'}\|B_\delta\|_{C^{M(K,N')}}.
\]
Indeed, if $N'$ auxiliary derivatives are required after the
integration by parts, it is enough to perform sufficiently many
additional integrations by parts, since differentiation of the
oscillatory factor produces at most the corresponding frequency scale
to a fixed finite power.

Eliminating all nonstationary coordinates therefore contributes
\begin{equation}\label{conclusion-non-stat2}
 \prod_{\substack{i\in I\setminus S\\k_i=0}}
 (R\delta_i^r)^{-K}
 \prod_{i:k_i\ge1}
 (2^{k_i}\lambda_i)^{-K}.
\end{equation}

The remaining coordinates are the stationary indices $S$ and the
bounded inactive indices
\[
 J_0=\{i:i\notin I,\ k_i=0\}.
\]
Set
\[
 J=S\cup J_0,
 \qquad
 A_J=1-\sum_{j\in J}(\delta_jv_j)^r.
\]
After the nonstationary variables have been eliminated, the remaining
phase is
\[
 \Phi_J
 =
 \sum_{j\in J}t_j\xi_jv_j+t_d\xi_dA_J^{1/r}.
\]

\subsection{Stationary variables}

We now eliminate the variables $v_i$ with $i\in S$. Consider
\begin{equation}
 \int_{\mathbb R^{|J|}}
 e^{-i\Phi_J}
 \widetilde B_\delta(v_J)\dd v_J.
\end{equation}
Fix $i\in S$ and put
\[
 A_i
 =
 1-\sum_{j\in J\setminus\{i\}}(\delta_jv_j)^r.
\]
Then
\[
 \Phi_J=\Phi_0+\Psi_i,
\]
where
\[
 \Phi_0
 =
 \sum_{j\in J\setminus\{i\}}t_j\xi_jv_j
 +
 t_d\xi_dA_i^{1/r}
\]
and
\begin{equation}\label{stat-int-2}
 \Psi_i
 =
 t_i\xi_iv_i
 +
 t_d\xi_d
 \left[
 (A_i-(\delta_iv_i)^r)^{1/r}-A_i^{1/r}
 \right].
\end{equation}

Since $i\in S$, one has
$|\xi_i|\asymp R\delta_i^r$, while $|\xi_d|\asymp R$.
On the fixed stationary sign sector put
\[
 a=t_i\frac{\xi_i}{R\delta_i^r},
 \qquad
 b=t_d\frac{\xi_d}{R},
 \qquad
 \varepsilon=\delta_i^r.
\]
Thus $a$ and $b$ stay in fixed compact subsets of $(0,\infty)$.
Dividing \eqref{stat-int-2} by $R\delta_i^r$ gives
\[
 G(v)
 =
 av
 +
 b\frac{(A-\varepsilon v^r)^{1/r}-A^{1/r}}{\varepsilon},
 \qquad A=A_i.
\]
A direct differentiation gives
\[
 G''(v)
 =
 -b(r-1)A\,v^{r-2}
 (A-\varepsilon v^r)^{1/r-2}.
\]
After a finite smooth refinement, the normalized parameters range
over a fixed compact set. On a common enlargement of the $v$-support,
\[
 |G''(v)|\ge c_0>0,
\]
and all required derivatives of $G$ are uniformly bounded.
These bounds apply to both the near-critical and nonstationary parts
treated below.
Let $r'=r/(r-1)$. The critical point $v_*$ and the corresponding
graph height $h_*=(A-\varepsilon v_*^r)^{1/r}$ satisfy
\[
 \frac{v_*}{h_*}
 =
 \left(\frac ab\right)^{r'-1},
 \qquad
 h_*^r+\varepsilon v_*^r=A.
\]
Hence
\begin{equation}
 h_*
 =
 \frac{A^{1/r}b^{r'-1}}
 {(b^{r'}+\varepsilon a^{r'})^{1/r}},
 \qquad
 v_*
 =
 \frac{A^{1/r}a^{r'-1}}
 {(b^{r'}+\varepsilon a^{r'})^{1/r}}.
\end{equation}
Consequently,
\begin{align}
 R\delta_i^r av_*+Rbh_*
 &=
 R A^{1/r}
 (b^{r'}+\varepsilon a^{r'})^{1/r'}
 \nonumber\\
 &=
 A^{1/r}
 \left(
 |t_d\xi_d|^{r'}
 +
 \left|\frac{t_i\xi_i}{\delta_i}\right|^{r'}
 \right)^{1/r'}.
\end{align}
Combining this critical value with the $v_i$-independent phase shows that the
coefficient of the remaining root $A_i^{1/r}$ is replaced by
\[
 \left(
 |t_d\xi_d|^{r'}
 +
 \left|\frac{t_i\xi_i}{\delta_i}\right|^{r'}
 \right)^{1/r'}.
\]

Put $\mu=R\delta_i^r$, with all scales held fixed. After smooth
localizations, the part near $v_*$ admits the Morse coordinate
\[
 w=(v-v_*)\left(2\int_0^1(1-s)
 [-G''(v_*+s(v-v_*))]\dd s\right)^{1/2},
\]
so that $G(v)-G(v_*)=-w^2/2$. The coordinate change and its required
auxiliary derivatives are uniformly bounded. Write
$\widetilde B_\delta^{\mathrm{near}}$ for the localized amplitude and
$\mathfrak b(w)$ for its transform under this change of variables.
The Gaussian Fourier identity gives the exact formula
\[
 \int e^{-i\mu G(v)}\widetilde B_\delta^{\mathrm{near}}(v)\dd v
 =c\mu^{-1/2}e^{-i\mu G(v_*)}
 \int e^{-i\zeta^2/(2\mu)}\widehat{\mathfrak b}(\zeta)\dd\zeta,
\]
where $c$ is a fixed Fourier normalization constant. Auxiliary
derivatives act only on $\mathfrak b$, and the Fourier $L^1$ norms of these
derivatives are controlled by finitely many derivatives of the incoming
amplitude. Thus the normalized amplitude has uniform finite-order
derivative bounds. The complementary part is uniformly nonstationary.
After factoring $e^{-i\mu G(v_*)}$, sufficiently many additional
integrations by parts compensate for each prescribed auxiliary
derivative of this factor. This also covers critical points outside
the support enlargement and yields the same exact factorization and
symbol bounds.

The same calculation can now be iterated. If $T\subset S$ has already
been eliminated, the coefficient of the remaining root is
\[
 H_T
 =
 \left(
 |t_d\xi_d|^{r'}
 +
 \sum_{i\in T}
 \left|\frac{t_i\xi_i}{\delta_i}\right|^{r'}
 \right)^{1/r'}.
\]
Since $\xi_d=R\eta_d$ and, for $i\in S$,
$\xi_i=R\delta_i^r\eta_i$, one has
\[
 \frac{H_T}{R}
 =
 \left(
 t_d^{r'}|\eta_d|^{r'}
 +
 \sum_{i\in T}
 \delta_i^r t_i^{r'}|\eta_i|^{r'}
 \right)^{1/r'}.
\]
This quantity and all required normalized derivatives are uniformly
bounded, and it has a fixed positive lower bound. Thus the same
stationary phase argument may be applied successively to every
$i\in S$.

After eliminating all stationary variables, we obtain
\begin{align}
 &\int_{\mathbb R^{|J|}}
 e^{-i\Phi_J}\widetilde B_\delta(v_J)\dd v_J
 \nonumber\\
 &\qquad=
 \left(
 \prod_{i\in S}(R\delta_i^r)^{-1/2}
 \right)
 \nonumber\\
 &\qquad\quad\times
 \int_{\mathbb R^{|J_0|}}
 e^{-i\left[
 \sum_{j\in J_0}t_j\xi_jv_j
 +
 h_S(t,\eta)
 \left(1-\sum_{j\in J_0}(\delta_jv_j)^r\right)^{1/r}
 \right]}
 a^{(1)}_{R,\delta,S,k}(t,\eta,v_{J_0})
 \dd v_{J_0},
 \label{conclusion-stat}
\end{align}
where
\[
 h_S(t,\eta)
 =
 \left(
 |t_d\xi_d|^{r'}
 +
 \sum_{i\in S}
 \left|\frac{t_i\xi_i}{\delta_i}\right|^{r'}
 \right)^{1/r'}.
\]
Equivalently,
\[
 h_S(t,\eta)
 =
 R\left\{
 t_d^{r'}
 \left[
 |\eta_d|^{r'}
 +
 \sum_{i\in S}
 \delta_i^r
 \left(\frac{t_i}{t_d}\right)^{r'}
 |\eta_i|^{r'}
 \right]
 \right\}^{1/r'}.
\]

\subsection{Bounded inactive variables}

It remains to integrate over $J_0$. For $j\in J_0$ one has
$R\delta_j^r<1$, $\lambda_j=1$, $k_j=0$, and $|\xi_j|\lesssim1$.
Moreover, $h_S/R$ and its required normalized derivatives are
uniformly bounded.

Factor $e^{-ih_S(t,\eta)}$ from the integrand in
\eqref{conclusion-stat} and define
\begin{align*}
 a_{R,\delta,S,k}(t,\eta)
 :=
 \int_{\mathbb R^{|J_0|}}
 &e^{-i\left[
 \sum_{j\in J_0}t_j\xi_jv_j
 +
 h_S(t,\eta)
 \left\{
 \left(1-\sum_{j\in J_0}(\delta_jv_j)^r\right)^{1/r}-1
 \right\}
 \right]}
 \\
 &\times
 a^{(1)}_{R,\delta,S,k}(t,\eta,v_{J_0})
 \dd v_{J_0}.
\end{align*}
By \eqref{eq:local-divided-difference},
\[
 \left|
 h_S(t,\eta)
 \left[
 \left(1-\sum_{j\in J_0}(\delta_jv_j)^r\right)^{1/r}-1
 \right]
 \right|
 \lesssim
 R\sum_{j\in J_0}\delta_j^r
 \lesssim
 |J_0|,
\]
and the same uniform bound holds for every required normalized
derivative. The linear terms are also uniformly bounded because
$|\xi_j|\lesssim1$. Hence the residual exponential may be absorbed
into the amplitude and the $J_0$ variables may be integrated
absolutely. Therefore
\begin{align}\label{conclusion-bdd-inactive}
 &\int_{\mathbb R^{|J_0|}}
 e^{-i\left[
 \sum_{j\in J_0}t_j\xi_jv_j
 +
 h_S(t,\eta)
 \left(1-\sum_{j\in J_0}(\delta_jv_j)^r\right)^{1/r}
 \right]}
 a^{(1)}_{R,\delta,S,k}
 \dd v_{J_0}
 \nonumber\\
 &\qquad=
 e^{-ih_S(t,\eta)}
 a_{R,\delta,S,k}(t,\eta).
\end{align}

Working backwards through the at most $n$ elimination steps,
we obtain a finite integer $M(K,N,d)$ such that
\[
 \|a_{R,\delta,S,k}\|_{C_{(t,\eta)}^N}
 \le
 C_{K,N}
 \|B_\delta\|_{C^{M(K,N,d)}}.
\]
Finally, restoring the sector cutoffs and the factor $V_\delta$, and
combining \eqref{conclusion-non-stat2},
\eqref{conclusion-stat}, and
\eqref{conclusion-bdd-inactive}, each sector of $m_{t,\delta}$ is of
the form
\[
 V_\delta
 \prod_{i\in S}(R\delta_i^r)^{-1/2}
 \prod_{\substack{i\in I\setminus S\\k_i=0}}
 (R\delta_i^r)^{-K}
 \prod_{i:k_i\ge1}
 (2^{k_i}\lambda_i)^{-K}
 e^{-ih_S(t,\eta)}
 a_{R,\delta,S,k}(t,\eta).
\]
This proves \eqref{eq:sector-factorization} and
\eqref{eq:local-sector-amplitude}. The identity
\eqref{eq:local-dual-phase} follows from
$\xi_i=R\delta_i^r\eta_i$ for $i\in S$ and
$(r-1)r'=r$.

\section{The global theorem and sharpness}
\label{sec:global-theorem}

We extend the local estimates to the global maximal function $M$ and
then prove the necessity of the exponent range.
The dyadic decomposition of the dilation parameters, the coordinatewise
low--high frequency splitting, and the rescaling argument follow the
approach of Lee, Lee, and Oh~\cite[Section~3]{LLO2025-2}.
We formulate the local-to-global implication for compactly supported
positive measures under the coordinatewise decay hypothesis
\eqref{ltg:eq:hyp}, so that it also applies to their coordinate
projections.

For $p\ge2$, the high-frequency summation follows the same approach.
For $1<p<2$, we keep the spatial exponent $p$ fixed and interpolate
between $L^p(\ell^p)$ and $L^p(\ell^\infty)$ estimates to obtain an
$L^p(\ell^2)$ bound. A bootstrap argument then gives uniform bounds
for the maximal operators restricted to finitely many dilation boxes.
This treatment of $p<2$ is also needed in the Euclidean case and
is independent of the curvature degeneracy when $r>2$.

Let $\mu$ be a finite positive Borel measure on $\R^d$ with
$\supp\mu\subset[-c,c]^d$ for some $c<\infty$. Recall that
\[
 A f(x,t)=\int f(x-\mathrm D_t y)\dd\mu(y),\qquad
 \mathrm D_t=\diag(t_1,\ldots,t_d),
\]
and define
\[
 M_\loc f(x)=\sup_{t\in[1,2]^d}|Af(x,t)|,
 \qquad Mf(x)=\sup_{t\in(0,\infty)^d}|Af(x,t)|.
\]
We first work with Schwartz functions. Their averages are continuous
in $t$, so every supremum can equivalently be taken over a countable
dense parameter set.

Use the cutoffs $\psi_0$, $\psi$ and the coordinate projections
from Section~\ref{sec:coordinate-decay}. We extend $P_m^{(i)}$ to
all $m\in\Z$ by the same formula
$\widehat{P_m^{(i)}f}(\xi)=\psi(2^{-m}\xi_i)\widehat f(\xi)$.
In particular, $I=L_0^{(i)}+\sum_{m\ge1}P_m^{(i)}$.
The projections have uniformly bounded $L^p$ operator norms because
their kernels are dilates of a fixed Schwartz function.

\begin{proposition}\label{ltg:thm:main}
Let $1<p<\infty$. Suppose there exist $C_0<\infty$ and $\epsilon>0$
such that, for every $i=1,\ldots,d$ and $m\ge1$,
\begin{equation}\label{ltg:eq:hyp}
 \left\|M_\loc P_m^{(i)}f\right\|_p
 \le C_0 2^{-\epsilon m}\left\|f\right\|_p.
\end{equation}
Then
\begin{equation}
 \left\|M f\right\|_p\le C\left\|f\right\|_p.
\end{equation}
The constant depends only on $d,p,\epsilon,C_0,c,\mu(\R^d)$,
and the chosen cutoffs. No symmetry or curvature assumption on $\mu$
is needed once \eqref{ltg:eq:hyp} is available.
\end{proposition}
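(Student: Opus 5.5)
We will follow the scheme of Lee, Lee, and Oh \cite[Section~3]{LLO2025-2}: decompose the dilation parameters dyadically, split each coordinate into low and high frequency relative to its dilation scale, and rescale. For $\mathbf j=(j_1,\ldots,j_d)\in\Z^d$, let $M_{\mathbf j}f=\sup_{t\in\prod_i[2^{j_i},2^{j_i+1}]}|Af(\cdot,t)|$, so that $Mf=\sup_{\mathbf j}M_{\mathbf j}f$. Rescaling by $\mathrm D_{2^{\mathbf j}}$ turns $M_{\mathbf j}$ into $M_\loc$, and $P_{m}^{(i)}$ into $P_{m+j_i}^{(i)}$. Hence \eqref{ltg:eq:hyp} gives
\[
\|M_{\mathbf j}P^{(i)}_{m-j_i}f\|_p\le C_02^{-\epsilon m}\|f\|_p,\qquad m\ge1.
\]
For each coordinate we write $I=L^{(i)}_{-j_i}+\sum_{m\ge1}P^{(i)}_{m-j_i}$, where $L^{(i)}_{-j_i}$ is the rescaled $L_0^{(i)}$. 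Taking the product over $i$ splits $f$ into pieces indexed by a subset $E\subset\{1,\ldots,d\}$ of high-frequency coordinates, with the low-frequency projection in the coordinates outside $E$.

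\textbf{Low-frequency pieces.} For a fixed set $E^c$ of low coordinates, the kernel of $A(\cdot,t)\prod_{i\notin E}L^{(i)}_{-j_i}$ is dominated by a product of one-parameter approximate identities at scales $2^{j_i}$ in the coordinates $i\notin E$. This uses $\supp\mu\subset[-c,c]^d$ and finiteness of $\mu$. Consequently $\sup_{j_i,\,i\notin E}$ of this piece is controlled pointwise by the strong maximal function $M_S$ in those coordinates, applied to the remaining operator. When $E=\emptyset$ this gives $|\cdot|\lesssim M_Sf$, which is bounded on $L^p$ for $p>1$. When $E\ne\emptyset$ we use an iterated structure: the operator in the $E$ coordinates is an averaging operator over the projected measure $\pi_E\mu$. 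This is exactly why the proposition is stated for general positive measures, since $\pi_E\mu$ again satisfies \eqref{ltg:eq:hyp} in the $E$ coordinates. We then proceed by induction on $|E|$, inserting $M_S$ in the low coordinates via vector-valued (Fefferman--Stein) bounds.

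\textbf{High-frequency pieces, $p\ge2$.} Fix $m=(m_i)_{i\in E}$ and set $f_{\mathbf j}=\prod_{i\in E}P^{(i)}_{m_i-j_i}f$. We bound $\sup_{\mathbf j}|\cdots|\le(\sum_{\mathbf j}|\cdots|^p)^{1/p}$. Applying the decay estimate to each $\mathbf j$ in a single coordinate of $E$, and combining with a product Littlewood--Paley inequality $\|(\sum_{\mathbf j}|f_{\mathbf j}|^p)^{1/p}\|_p\le\|(\sum|f_{\mathbf j}|^2)^{1/2}\|_p\lesssim\|f\|_p$ (valid for $p\ge2$), we get a bound $2^{-\epsilon\max_i m_i}$. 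After interpolating the single-coordinate decays (geometric mean), this is summable over $m\in\N^{E}$ with a $\log$-free loss $2^{-\epsilon|m|/|E|}$.

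\textbf{High-frequency pieces, $1<p<2$ (main obstacle).} Here $\ell^p$ is no longer dominated by the square function, so we argue on finitely many boxes. Let $\mathcal J$ be a finite set of $\mathbf j$ and $\mathcal N(\mathcal J)=\|\sup_{\mathbf j\in\mathcal J}M_{\mathbf j}\|_{p\to p}$, which is finite a priori (at most $|\mathcal J|$ times the local bound). We interpolate, at fixed $p$, two estimates for the linearized map $\{f_{\mathbf j}\}\mapsto\{M_{\mathbf j}P f_{\mathbf j}\}$:
\begin{itemize}
\item an $L^p(\ell^p)\to L^p(\ell^p)$ estimate with decay $2^{-\epsilon m}$, from \eqref{ltg:eq:hyp};
\item an $L^p(\ell^\infty)\to L^p(\ell^\infty)$ estimate with constant $\lesssim\mathcal N(\mathcal J)$.
\end{itemize}
This yields an $L^p(\ell^2)$ bound $\lesssim 2^{-\epsilon' m}\mathcal N(\mathcal J)^{1-\theta}$, where $\theta=p/2$. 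Applying it with the Littlewood--Paley square function, which is bounded from $L^p$ to $L^p(\ell^2)$, and summing in $m$ together with the low-frequency bounds gives
\[
\mathcal N(\mathcal J)\le C+C\,\mathcal N(\mathcal J)^{1-\theta}.
\]
Since $\mathcal N(\mathcal J)<\infty$, this forces $\mathcal N(\mathcal J)\le C'$ uniformly in $\mathcal J$. Monotone convergence then gives the claim. The delicate points are making the $\ell^\infty$-valued bound hold with the projections present (we dominate $|P^{(i)}g|$ by a maximal function of $g$ and use vector-valued bounds) and handling the mixed low/high indices inside the bootstrap.
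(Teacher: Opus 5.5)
Your proposal is correct and follows essentially the same route as the paper. The shared steps are: dyadic rescaling of the dilations; low-frequency terms dominated, via positivity of $\mu$, by marginal maximal operators $M^{\mu_{\mathcal I}}$, which are bounded by induction on dimension because the marginals inherit the decay hypothesis; the product Littlewood--Paley inequality for $p\ge2$; and, for $1<p<2$, the $L^p(\ell^p)$--$L^p(\ell^\infty)$ interpolation followed by the bootstrap $B_N\le C_1+C_2B_N^{1-p/2}$ over finitely many dilation boxes.

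The differences are cosmetic. You split with the full product $\prod_i(L^{(i)}+Q^{(i)})$ rather than inclusion--exclusion. No Fefferman--Stein bound is needed: after taking absolute values, the $\mathbf j$-dependent projections in the mixed terms and at the $\ell^\infty$ endpoint are dominated pointwise, uniformly in $\mathbf j$, by iterated one-dimensional Hardy--Littlewood maximal functions. Finally, as in the paper, apply the decay hypothesis with an enlarged cutoff $\widetilde\psi$, so that $\|\Delta_{j-k}f\|_p$ appears on the right.
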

For $d\ge4$, the high-frequency argument proving
Theorem~\ref{thm:lplq}, with $p=q$, gives decay in the distinguished
frequency coordinate whenever $p>p_c(d,r)$. For $d=3$, this decay
is supplied by Proposition~\ref{prop:d3-decay}. Apply
Lemma~\ref{lem:global-decay} on each graph patch and then sum over
the finite family of patches to obtain decay in every coordinate.
Proposition~\ref{ltg:thm:main} proves the sufficiency assertion of
Theorem~\ref{thm:global-Lp} for every finite $p>p_c(d,r)$.
The case $p=\infty$ follows from the finiteness of $\sigma_r$.

Proposition~\ref{ltg:thm:main} requires local frequency decay at the
exponent $p$ under consideration. For $\sigma_r$, this decay is
available for every finite $p>p_c(d,r)$, even when $r\ge2(d-1)$.
For such $r$, $M_{\loc}$ is unbounded on $L^2$, including at
$r=2(d-1)$, by Proposition~\ref{prop:global-necessity}.

\subsection{Local-to-global bounds}

We begin with some useful lemmas.

\begin{lemma}\label{ltg:lem:local}
Under \eqref{ltg:eq:hyp}, $M_\loc$ is bounded on $L^p$.
Moreover, if $\widetilde\psi$ is a fixed smooth annular cutoff, then
the same decay, up to a constant, holds with $P_m^{(i)}$ replaced by
the multiplier $\widetilde\psi(2^{-m}\xi_i)$, for $m\ge1$.
\end{lemma}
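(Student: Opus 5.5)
The plan is to reduce boundedness of $M_\loc$ to the decay hypothesis \eqref{ltg:eq:hyp} by a multiparameter Littlewood--Paley decomposition. For the first assertion, I decompose $f$ in every coordinate frequency simultaneously using \eqref{eq:partition}:
\[
 f=\sum_{\mathbf m\in\Z_{\ge0}^d}P_{\mathbf m}f,
 \qquad
 P_{\mathbf m}=\prod_{i=1}^d Q^{(i)}_{m_i},
\]
where $Q^{(i)}_0=L_0^{(i)}$ and $Q^{(i)}_m=P_m^{(i)}$ for $m\ge1$. Sublinearity of $M_\loc$ gives $M_\loc f\le\sum_{\mathbf m}M_\loc P_{\mathbf m}f$. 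For $\mathbf m\ne0$, I pick the index $i$ with $m_i=\max_j m_j\ge1$, so that $P_{\mathbf m}=P_{m_i}^{(i)}\circ\prod_{j\ne i}Q^{(j)}_{m_j}$. Applying \eqref{ltg:eq:hyp} to $\prod_{j\ne i}Q^{(j)}_{m_j}f$, and using the uniform $L^p$ bounds for these projections (their kernels are products of $L^1$-normalized dilates), I get
\[
 \|M_\loc P_{\mathbf m}f\|_p\lesssim 2^{-\epsilon\max_j m_j}\|f\|_p .
\]
The number of $\mathbf m$ with $\max_j m_j=m$ is $O(m^{d-1})$, so summing $m^{d-1}2^{-\epsilon m}$ converges. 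The remaining term $\mathbf m=0$ is $M_\loc\prod_iL_0^{(i)}f$. Its kernel is $\int\check\Psi_0(z-\mathrm D_ty)\dd\mu(y)$ with $\check\Psi_0$ a Schwartz function. Because $\supp\mu\subset[-c,c]^d$ and $t\in[1,2]^d$, the supremum over $t$ of this kernel is dominated by $\mu(\R^d)C_N(1+|z|)^{-N}$, which is integrable. Young's inequality then bounds this piece on every $L^p$.

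For the second assertion, I write $\widetilde\psi(2^{-m}\xi_i)=\sum_{|l|\le l_0}\widetilde\psi(2^{-m}\xi_i)\psi(2^{-m-l}\xi_i)$ for $m$ large. This finite identity holds because $\widetilde\psi$ is supported in a fixed annulus, where the dyadic $\psi$-partition sums to one; the bounded range of $l$ is fixed by that annulus. Each term equals $P_{m+l}^{(i)}$ composed with the operator $T_{m}$ whose multiplier is $\widetilde\psi(2^{-m}\xi_i)$. $T_m$ is convolution in $x_i$ with an $L^1$-normalized dilate of $\check{\widetilde\psi}$, so it is uniformly bounded on $L^p$ and commutes with everything. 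Applying \eqref{ltg:eq:hyp} to $T_mf$ gives $\|M_\loc \widetilde T_m f\|_p\lesssim\sum_{|l|\le l_0}2^{-\epsilon(m+l)}\|f\|_p\lesssim 2^{-\epsilon m}\|f\|_p$. Any term with $m+l\le0$ occurs only for boundedly many $m$. In that case the corresponding frequency piece is bounded as in the low-frequency case: its kernel envelope is integrable with respect to $\mu$ and uniform in $t$, or it follows from the first assertion.

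The only mildly delicate step is the domination of the low-frequency kernel uniformly in $t$ for a general measure $\mu$. Since $\mu$ is finite with compact support and $\mathrm D_t$ ranges over a compact set, this is a routine envelope estimate. Everything else is bookkeeping of the multiparameter sum, together with the requirement that $p>1$ so that the coordinate projections are bounded. In fact they are convolutions with $L^1$ kernels, so this holds for all $p$.
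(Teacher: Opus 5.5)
Your argument is correct and essentially the paper's. The low-frequency envelope bound with Young's inequality, and the treatment of $\widetilde\psi(2^{-m}\xi_i)$ through the homogeneous partition (using the first assertion for the finitely many $m$ with $m+l\le 0$), match the paper's proof. The only difference is bookkeeping in the first part: instead of the full tensor decomposition with its harmless $O(m^{d-1})$ counting factor, the paper uses the telescoping identity $I=\prod_{i}L_0^{(i)}+\sum_{i=1}^d\bigl(\prod_{\ell<i}L_0^{(\ell)}\bigr)\sum_{m\ge1}P_m^{(i)}$, which gives a plain geometric series in $m$.
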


\begin{proof}
The telescoping identity
\[
 I=\prod_{i=1}^dL_0^{(i)}
 +\sum_{i=1}^d\left(\prod_{\ell<i}L_0^{(\ell)}\right)
 \sum_{m\ge1}P_m^{(i)}
\]
and \eqref{ltg:eq:hyp} bound all terms in the sum by a convergent
geometric series in $m$. For the first term, let
$\Lambda(z)=\prod_i\check\psi_0(z_i)$. Its averaging kernel is
$\int\Lambda(z-\mathrm D_t y)\dd\mu(y)$.
Since $t\in[1,2]^d$ and $y\in[-c,c]^d$, its absolute value is
bounded uniformly in $t$ by $C_N(1+|z|)^{-N}$ for any fixed $N$.
An integrable envelope and Young's inequality give the remaining bound.

For an enlarged annular multiplier, insert the homogeneous partition
\[
 \sum_{\ell\in\Z}\psi(2^{-\ell}\xi_i)=1,
 \qquad \xi_i\ne0.
\]
Only $|\ell-m|\le C_{\widetilde\psi}$ contribute. For $\ell\ge1$,
use \eqref{ltg:eq:hyp} and the uniform multiplier norm of
$\widetilde\psi(2^{-m}\xi_i)$. If $\ell\le0$, then $m$ belongs to a
fixed finite set, and the local $L^p$ bound just proved gives the
desired estimate after changing the constant.
\end{proof}

For $\mathcal I\subset\{1,\ldots,d\}$, let $\pi_{\mathcal I}$ denote
the coordinate projection and let
$\mu_{\mathcal I}=(\pi_{\mathcal I})_\#\mu$ be the corresponding
marginal measure. For a measure $\nu$ on $\R^{|\mathcal I|}$, write
\[
 A_\nu g(x_{\mathcal I},t_{\mathcal I})
 =
 \int_{\R^{|\mathcal I|}}
 g(x_{\mathcal I}-\mathrm D_{t_{\mathcal I}}y_{\mathcal I})
 \dd\nu(y_{\mathcal I}),
 \qquad
 M_\loc^\nu g
 =
 \sup_{t_{\mathcal I}\in[1,2]^{|\mathcal I|}}
 |A_\nu g(\cdot,t_{\mathcal I})|.
\]
We also write
$M^\nu g=\sup_{t_{\mathcal I}\in(0,\infty)^{|\mathcal I|}}
|A_\nu g(\cdot,t_{\mathcal I})|$.
These operators act on functions on $\R^d$ by fixing the remaining
coordinates. When $\mathcal I=\varnothing$, the maximal operator is
interpreted as $g\mapsto\mu(\R^d)|g|$.

\begin{lemma}\label{ltg:lem:marginal}
For every nonempty $\mathcal I$, the measure $\mu_{\mathcal I}$
satisfies \eqref{ltg:eq:hyp} in each of its coordinates, with the same
$p$ and $\epsilon$, and with a constant depending only on the original
data and $d$.
\end{lemma}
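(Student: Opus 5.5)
The plan is to realize the marginal maximal operator as a limit of averages of the original operator, with the dilations in the coordinates outside $\mathcal I$ sent to zero, and to transfer \eqref{ltg:eq:hyp} through that limit. Fix a nonempty $\mathcal I$, write $\mathcal I^c$ for its complement, and let $i\in\mathcal I$ and $m\ge1$. For a Schwartz function $g$ on $\R^d$ (the marginal operators act on functions of $x$ by freezing $x_{\mathcal I^c}$), consider the measure $\mu^{(\tau)}=(\mathrm D_{(1,\tau)})_\#\mu$, where the dilation is the identity on $\mathcal I$ and multiplies the coordinates in $\mathcal I^c$ by $\tau\in(0,1]$. This is just the original averaging operator with parameters $t_j=\tau s_j$, $j\in\mathcal I^c$. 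I would not try to use parameters $t_j\to0$ inside $[1,2]$ directly. Instead, observe that $A_{\mu_{\mathcal I}}g(x,t_{\mathcal I})=\int g(x_{\mathcal I}-\mathrm D_{t_{\mathcal I}}y_{\mathcal I},x_{\mathcal I^c})\,d\mu(y)$, which is the $\tau\to0$ limit of $A_{\mu}g(x,(t_{\mathcal I},\tau\mathbf 1))$.

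To keep the parameter in $[1,2]$, I would rescale space. Put $E_\tau$ equal to the identity on $\mathcal I$ and to multiplication by $\tau$ on $\mathcal I^c$. Then
\[
 A_\mu(g\circ E_\tau^{-1})(E_\tau x,(t_{\mathcal I},\tau\mathbf 1))
 =\int g\bigl(x_{\mathcal I}-\mathrm D_{t_{\mathcal I}}y_{\mathcal I},\,x_{\mathcal I^c}-y_{\mathcal I^c}\bigr)\,d\mu(y).
\]
Rather than parameter $\tau$, use the original parameters $t_j\in[1,2]$ applied to $g_\tau:=g\circ E_\tau^{-1}$ evaluated at $E_\tau x$. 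This gives
\[
 \sup_{t_{\mathcal I}}\Bigl|\int g\bigl(x_{\mathcal I}-\mathrm D_{t_{\mathcal I}}y_{\mathcal I},\,x_{\mathcal I^c}-\tau^{-1}\mathrm D_{t_{\mathcal I^c}}y_{\mathcal I^c}\bigr)\,d\mu\Bigr|.
\]
This is wrong in direction. The correct choice is $E_\tau$ multiplying by $\tau^{-1}$: with $g_\tau(z)=g(z_{\mathcal I},\tau z_{\mathcal I^c})$ one gets $M_\loc g_\tau(E_\tau^{-1}x)\ge\sup_{t_{\mathcal I}}|\int g(x_{\mathcal I}-\mathrm D_{t_{\mathcal I}}y_{\mathcal I},x_{\mathcal I^c}-\tau y_{\mathcal I^c})\,d\mu|$, fixing $t_{\mathcal I^c}=\mathbf 1$. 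The key point is that $P_m^{(i)}$ with $i\in\mathcal I$ commutes with this anisotropic rescaling, because the rescaling leaves coordinate $i$ untouched.

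Applying \eqref{ltg:eq:hyp} to $g_\tau$, with $P_m^{(i)}g_\tau=(P_m^{(i)}g)_\tau$, gives
\[
 \Bigl\|\sup_{t_{\mathcal I}}\bigl|A_\mu^{\tau}P_m^{(i)}g\bigr|\Bigr\|_p\le C_02^{-\epsilon m}\|g\|_p,
\]
where $A^\tau_\mu$ denotes the average with $\tau y_{\mathcal I^c}$ in the $\mathcal I^c$ slots. The Jacobian factors $\tau^{\pm|\mathcal I^c|/p}$ from changing variables in the two $L^p$ norms cancel exactly. Now let $\tau\to0$. For Schwartz $g$, the averages converge pointwise in $x$ and uniformly in $t_{\mathcal I}\in[1,2]^{|\mathcal I|}$, because $\mu$ is finite and compactly supported and $P_m^{(i)}g$ is uniformly continuous. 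So the supremum over a countable dense set converges pointwise to $M_\loc^{\mu_{\mathcal I}}P_m^{(i)}g$, and Fatou's lemma gives the desired bound with constant $C_0$ itself. The limit is $\int P_m^{(i)}g(x_{\mathcal I}-\mathrm D_{t_{\mathcal I}}y_{\mathcal I},x_{\mathcal I^c})\,d\mu(y)$, which is precisely $A_{\mu_{\mathcal I}}$ acting on $P_m^{(i)}g$ with $x_{\mathcal I^c}$ frozen.

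The step needing the most care is this limit: the interchange of supremum and limit, and the norm bookkeeping under the anisotropic rescaling. Uniform convergence in $t$ handles the first. For the second, I would verify that $\|g_\tau\|_p=\tau^{-|\mathcal I^c|/p}\|g\|_p$ and that the output norm carries the same factor, so the two cancel.
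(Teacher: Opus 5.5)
Your argument is correct and is essentially the paper's. The paper's test function $f_L(u,v)=g(u)\eta(v/L)$ is your $g_\tau$ with $\tau=1/L$ applied to a tensor product, and the paper replaces your uniform-continuity limit and Fatou step by the exact identity $AP_m^{(i)}f_L((u,v),(s,\mathbf 1))=A_{\mu_{\mathcal I}}P_m^{(i)}g(u,s)$ on the box $|v|_\infty\le L-c$ together with a volume count. When writing it up, remove the false start: the evaluation point should be $(x_{\mathcal I},\tau^{-1}x_{\mathcal I^c})$, that is, $E_\tau^{-1}x$ with your original $E_\tau$. Also note that your frozen-variable bound on $\R^d$ yields the $L^p(\R^{|\mathcal I|})$ statement used in the induction by testing on tensor products $g_1\otimes g_2$.
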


\begin{proof}
There is nothing to prove if $\mathcal I=\{1,\ldots,d\}$.
Otherwise write
$x=(u,v)\in\R^{|\mathcal I|}\times\R^h$,
where $h=d-|\mathcal I|$, and choose
$\eta\in C_c^\infty(\R^h)$ equal to one on $[-1,1]^h$.
For $g\in\mathcal S(\R^{|\mathcal I|})$, set
$f_L(u,v)=g(u)\eta(v/L)$.

Assume $\supp\mu\subset[-c,c]^d$. If $L>c$ and
$v\in[-L+c,L-c]^h$, then, with the omitted dilation parameters fixed
equal to one,
\[
 A P_m^{(i)}f_L((u,v),(s,\mathbf 1))
 =
 A_{\mu_{\mathcal I}}P_m^{(i)}g(u,s),
 \qquad
 s\in[1,2]^{|\mathcal I|},\quad i\in\mathcal I.
\]
Here the coordinate labels are arranged according to
$\mathcal I$ and $\mathcal I^c$. Consequently,
\[
 [2(L-c)]^{h/p}
 \left\|M_\loc^{\mu_{\mathcal I}}P_m^{(i)}g\right\|_p
 \le
 C_0 2^{-\epsilon m}
 L^{h/p}\|\eta\|_p\|g\|_p.
\]
Dividing by $L^{h/p}$ and letting $L\to\infty$ proves the claim.
\end{proof}

We will use two standard analytic bounds. Let $H_i$ be the centered
Hardy--Littlewood maximal operator in the $i$th coordinate and put
$\mathcal H=H_1\cdots H_d$. Iterating the one-dimensional maximal
theorem gives
\begin{equation}\label{ltg:eq:HL}
 \|\mathcal Hf\|_p\le C_{d,p}\|f\|_p,
 \qquad 1<p<\infty.
\end{equation}
If $b_\ell(u)=2^{-\ell}(1+2^{-\ell}|u|)^{-2}$, then
\begin{equation}\label{ltg:eq:envelope-HL}
 b_\ell*_i|f|\le CH_if.
\end{equation}
Indeed, split the integral into $|u|\le2^\ell$ and the annuli
$2^{\ell+h-1}<|u|\le2^{\ell+h}$; the resulting bounds form the
geometric sum $C\sum_{h\ge0}2^{-h}H_if$.

For $\ell\in\Z^d$, put $\Delta_\ell=\prod_iP_{\ell_i}^{(i)}$.
We will use the product square-function estimate
\begin{equation}\label{ltg:eq:LP}
 \left\|
 \left(\sum_{\ell\in\Z^d}|\Delta_\ell f|^2\right)^{1/2}
 \right\|_p
 \le C_{d,p}\|f\|_p.
\end{equation}
For completeness, the scalar one-dimensional Littlewood--Paley bound
extends to Hilbert-valued functions by randomizing a finite
orthonormal expansion and applying Khintchine's inequality on both
sides. Applying this Hilbert-valued bound successively in the
coordinates proves \eqref{ltg:eq:LP} by Fubini.

\begin{proof}[Proof of Proposition~\ref{ltg:thm:main}]

Write each positive dilation as $t=2^ks$, where $k\in\Z^d$ and
$s\in[1,2]^d$, and set
\[
 M_kf(x)=\sup_{s\in[1,2]^d}|Af(x,2^ks)|,
 \qquad
 M_Nf=\sup_{|k|_\infty\le N}M_kf,
 \qquad
 B_N=\|M_N\|_{L^p\to L^p}.
\]
Here $2^ks=(2^{k_1}s_1,\ldots,2^{k_d}s_d)$. By a change of variables,
\begin{equation}\label{ltg:eq:rescaling}
 Af(\mathrm D_{2^k}x,2^ks)
 =
 A(f\circ\mathrm D_{2^k})(x,s).
\end{equation}
Lemma~\ref{ltg:lem:local} therefore gives a uniform bound for each
$M_k$, and hence $B_N<\infty$ for every $N$. Each finite maximal
operator is extended to $L^p$ by density before it is used below.
We will obtain a bound independent of $N$.

Let $L_k^{(i)}$ have multiplier $\psi_0(2^{k_i}\xi_i)$, and write
$Q_k^{(i)}=I-L_k^{(i)}$. The algebraic identity
\begin{equation}\label{ltg:eq:inclusion-exclusion}
 I
 =
 \prod_{i=1}^dQ_k^{(i)}
 +
 \sum_{\varnothing\ne\mathcal J\subset\{1,\ldots,d\}}
 (-1)^{|\mathcal J|+1}
 \prod_{i\in\mathcal J}L_k^{(i)}
\end{equation}
separates the term in which every coordinate is high frequency from
all remaining terms. We first estimate the terms containing at least
one low-frequency factor.

\begin{lemma}\label{ltg:lem:low}
For $\varnothing\ne\mathcal J\subset\{1,\ldots,d\}$ and
$\mathcal I=\mathcal J^c$,
\begin{equation}
 \sup_{k\in\Z^d}\sup_{s\in[1,2]^d}
 \left|
 A\left(\prod_{i\in\mathcal J}L_k^{(i)}f\right)(x,2^ks)
 \right|
 \le
 C M^{\mu_{\mathcal I}}
 \left(\prod_{i\in\mathcal J}H_i|f|\right)(x).
\end{equation}
\end{lemma}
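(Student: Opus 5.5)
The plan is to write the averaging operator applied to the low-frequency piece as an integral against $\mu$. We then split each integration variable $y$ into $y_{\mathcal I}$ and $y_{\mathcal J}$, and observe that the low-pass filters in the $\mathcal J$ coordinates absorb the $\mathcal J$-components of the dilation. This leaves the $\mathcal I$-marginal acting on a coordinatewise maximal function.

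\emph{Kernel of the low-pass filters.} For $i\in\mathcal J$, the multiplier $\psi_0(2^{k_i}\xi_i)$ has kernel $2^{-k_i}\lambda(2^{-k_i}u)$, with $\lambda=\check\psi_0$ Schwartz. Hence
\[
 \prod_{i\in\mathcal J}L_k^{(i)}f(z)=\int_{\R^{|\mathcal J|}}f(z_{\mathcal I},z_{\mathcal J}-w)\prod_{i\in\mathcal J}2^{-k_i}\lambda(2^{-k_i}w_i)\dd w .
\]
Substituting $z=x-\mathrm D_{2^ks}y$ gives
\[
 \left|A\Bigl(\prod_{i\in\mathcal J}L_k^{(i)}f\Bigr)(x,2^ks)\right|\le\int\!\!\int|f|(x_{\mathcal I}-\mathrm D_{2^{k_{\mathcal I}}s_{\mathcal I}}y_{\mathcal I},\,x_{\mathcal J}-w)\prod_{i\in\mathcal J}2^{-k_i}\bigl|\lambda\bigl(2^{-k_i}(w_i-2^{k_i}s_iy_i)\bigr)\bigr|\dd w\dd\mu(y).
\]

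\emph{Removing the shift.} This is the key step. Since $|s_iy_i|\le2c$, the shifted envelope is uniformly bounded. Concretely, $|\lambda(2^{-k_i}w_i-s_iy_i)|\le C_N(1+2^{-k_i}|w_i|)^{-2}$ with $C_N$ depending only on $c$, because $1+|a-b|\ge(1+|a|)/(1+|b|)$. So the shifted kernel is dominated by $C\,b_{k_i}(w_i)$, uniformly in $y$, $s$, and $k$. By \eqref{ltg:eq:envelope-HL}, applied successively in each $i\in\mathcal J$ (using positivity and Fubini), the $w$-integral is bounded by $C\bigl(\prod_{i\in\mathcal J}H_i|f|\bigr)(x_{\mathcal I}-\mathrm D_{2^{k_{\mathcal I}}s_{\mathcal I}}y_{\mathcal I},x_{\mathcal J})$. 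This bound no longer depends on $y_{\mathcal J}$.

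\emph{Passing to the marginal.} We then integrate $\mu$ over $y_{\mathcal J}$, which replaces $\mu$ by the marginal $\mu_{\mathcal I}$. This gives
\[
 C\,A_{\mu_{\mathcal I}}\Bigl(\prod_{i\in\mathcal J}H_i|f|\Bigr)\bigl(x,2^{k_{\mathcal I}}s_{\mathcal I}\bigr)\le C\,M^{\mu_{\mathcal I}}\Bigl(\prod_{i\in\mathcal J}H_i|f|\Bigr)(x).
\]
Taking suprema over $s$ and $k$ finishes the proof. When $\mathcal I=\varnothing$, the same computation gives $\mu(\R^d)\prod H_i|f|$, which matches the stated convention.

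The only delicate point is the envelope domination after the shift, which must be uniform in $k$ and $s$. Compact support of $\mu$ makes this routine. One should also note that the iterated composition of the $H_i$ commutes appropriately with the positive averaging; this follows because all operations involved are positive and are applied in separate variables.
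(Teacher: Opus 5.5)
Your proposal is correct and follows the paper's proof essentially step for step. In both, the shifted kernel $2^{-k_i}|\check\psi_0(2^{-k_i}w_i-s_iy_i)|$ is dominated by $Cb_{k_i}(w_i)$ uniformly in $|y_i|\le c$, $s$ and $k$, the bound \eqref{ltg:eq:envelope-HL} is applied in each $\mathcal J$ coordinate, positivity of $\mu$ lets you integrate out $y_{\mathcal J}$ to reach $A_{\mu_{\mathcal I}}$, and the suprema over $s$ and $k$ are then taken.
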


\begin{proof}
The kernel of $L_k^{(i)}$ is
$\lambda_{k_i}(u)=2^{-k_i}\check\psi_0(2^{-k_i}u)$.
Assume $\supp\mu\subset[-c,c]^d$. For $|y_i|\le c$,
$t_i=2^{k_i}s_i$, and $s_i\in[1,2]$, Schwartz decay gives
\[
 |\lambda_{k_i}(u-t_iy_i)|
 \le
 C2^{-k_i}(1+2^{-k_i}|u|)^{-2}
 =
 Cb_{k_i}(u).
\]
Expand the convolutions in the $\mathcal J$ coordinates and use the
positivity of $\mu$. The shifted kernels in those coordinates are
bounded by the displayed envelopes, which no longer depend on
$y_{\mathcal J}$. Integrating out these coordinates therefore gives
\begin{align*}
 \left|
 A\left(\prod_{i\in\mathcal J}L_k^{(i)}f\right)(x,t)
 \right|
 &\le
 C A_{\mu_{\mathcal I}}
 \left(
 \prod_{i\in\mathcal J}(b_{k_i}*_i)|f|
 \right)(x,t_{\mathcal I})\\
 &\le
 C A_{\mu_{\mathcal I}}
 \left(
 \prod_{i\in\mathcal J}H_i|f|
 \right)(x,t_{\mathcal I}),
\end{align*}
where \eqref{ltg:eq:envelope-HL} was used in the second step.
Taking the supremum in $k$ and $s$ proves the claim.
\end{proof}

For the application to $\sigma_r$, the projected measures need not
have bounded densities. For $1\le k\le d-1$, let
$\pi_k(y)=(y_1,\ldots,y_{d-k})$ and put
$H(u)=1-\sum_{i=1}^{d-k}|u_i|^r$. The density $w_{r,k}$ of
$(\pi_k)_\#\sigma_r$ satisfies
\[
 w_{r,k}(u)\asymp_{d,r,k}H(u)^{k/r-1},\qquad H(u)>0.
\]
Indeed, apply the coarea formula to
$F(y)=\sum_{i=1}^d|y_i|^r$; the factor $|\nabla F|$ is bounded
above and away from zero on $S_r$, and rescaling the omitted
coordinates by $H(u)^{1/r}$ gives the displayed power.
In particular, omitting two coordinates gives a bounded density
when $r=2$, but an unbounded density near the boundary when $r>2$.
Lemma~\ref{ltg:lem:marginal} transfers the local frequency decay to
these projected measures and provides the input for the dimension
induction below without any bounded-density assumption.

We prove Proposition~\ref{ltg:thm:main} by induction on the dimension,
starting with dimension zero, where the operator is multiplication
by the mass of the measure. For the induction step,
Lemma~\ref{ltg:lem:marginal} shows that every proper coordinate
pushforward $\mu_{\mathcal I}$ satisfies the same hypothesis.
Hence, by the induction hypothesis, $M^{\mu_{\mathcal I}}$ is bounded
on $L^p(\R^{|\mathcal I|})$, and therefore on $L^p(\R^d)$ by Fubini.
Lemma~\ref{ltg:lem:low} and \eqref{ltg:eq:HL} show that all terms in
\eqref{ltg:eq:inclusion-exclusion} other than the all-high term have
total $L^p$ norm at most $C\|f\|_p$, independently of $N$.
No geometric identification of the measures $\mu_{\mathcal I}$ is
needed.

\noindent{\bf High-frequency estimates for \texorpdfstring{$p\ge2$}{p >= 2}.}
For $j\in\N^d$ with every $j_i\ge1$, the relevant relative-frequency
projection is $\Delta_{j-k}$. Indeed,
\[
 \prod_iQ_k^{(i)}f
 =
 \sum_{j\in\N^d}\Delta_{j-k}f,
 \qquad
 |2^{k_i}\xi_i|\asymp2^{j_i}
 \quad\hbox{on the support of }\widehat{\Delta_{j-k}f}.
\]
Choose $\widetilde\psi\in C_c^\infty(\R\setminus\{0\})$ equal
to one on $\supp\psi$, and let $\widetilde\Delta_\ell$ be the
corresponding product multiplier. Put
$|j|_\infty=\max\{j_1,\ldots,j_d\}$ and define
\[
 \mathcal T_{j,k}g=M_k(\widetilde\Delta_{j-k}g).
\]
Rescaling by \eqref{ltg:eq:rescaling}, and applying the enlarged-cutoff
estimate of Lemma~\ref{ltg:lem:local} in a coordinate for which
$j_i=|j|_\infty$, gives
\begin{equation}\label{ltg:eq:block}
 \|\mathcal T_{j,k}g\|_p
 \le
 C2^{-\epsilon|j|_\infty}\|g\|_p.
\end{equation}
The remaining coordinate multipliers have uniformly bounded
$L^p$ operator norms. The constant is uniform in $j$ and $k$.
Since $\widetilde\Delta_{j-k}\Delta_{j-k}=\Delta_{j-k}$,
\begin{equation}
 \|M_k\Delta_{j-k}f\|_p
 \le
 C2^{-\epsilon|j|_\infty}\|\Delta_{j-k}f\|_p.
\end{equation}

For
\[
 T_{j,N}f
 =
 \sup_{|k|_\infty\le N}M_k\Delta_{j-k}f
\]
and $p\ge2$,
\begin{align}
 \|T_{j,N}f\|_p
 &\le
 \left(
 \sum_{|k|_\infty\le N}
 \|M_k\Delta_{j-k}f\|_p^p
 \right)^{1/p}
 \notag\\
 &\le
 C2^{-\epsilon|j|_\infty}
 \left\|
 \left(
 \sum_{|k|_\infty\le N}
 |\Delta_{j-k}f|^p
 \right)^{1/p}
 \right\|_p
 \notag\\
 &\le
 C2^{-\epsilon|j|_\infty}
 \left\|
 \left(
 \sum_{k\in\Z^d}
 |\Delta_{j-k}f|^2
 \right)^{1/2}
 \right\|_p
 \notag\\
 &\le
 C2^{-\epsilon|j|_\infty}\|f\|_p.
\end{align}
The penultimate inequality uses the pointwise inclusion
$\ell^2\subset\ell^p$ for $p\ge2$, while the last inequality follows
from \eqref{ltg:eq:LP}, since $j-k$ runs through $\Z^d$ as $k$ does.

Finally,
\begin{equation}\label{ltg:eq:summability}
 \sum_{\substack{j\in\N^d\\j_i\ge1}}
 2^{-\epsilon|j|_\infty}
 \lesssim_d
 \sum_{m\ge1}m^{d-1}2^{-\epsilon m}
 <\infty.
\end{equation}
Together with the mixed low-frequency bounds, this proves
$B_N\le C$ for $p\ge2$, with $C$ independent of $N$.

\noindent{\bf High-frequency estimates for \texorpdfstring{$1<p<2$}{1 < p < 2}.}
The $\ell^p$ to $\ell^2$ comparison used above is unavailable for
$p<2$. We replace it with an interpolated vector-valued estimate.
All sequences below are indexed by the finite set
$K_N=\{k\in\Z^d:|k|_\infty\le N\}$.

First, \eqref{ltg:eq:block} implies
\begin{equation}\label{ltg:eq:vector-p}
 \left\|\{\mathcal T_{j,k}g_k\}_{k\in K_N}\right\|_{L^p(\ell^p)}
 \le
 C2^{-\epsilon |j|_\infty}
 \left\|\{g_k\}_{k\in K_N}\right\|_{L^p(\ell^p)}.
\end{equation}
Second, each factor of the product kernel of
$\widetilde\Delta_{j-k}$ is dominated by a dilate of
$(1+|u|)^{-2}$. Applying \eqref{ltg:eq:envelope-HL} successively gives
\[
 |\widetilde\Delta_{j-k}g_k|
 \le C\mathcal H(|g_k|)
 \le C\mathcal H G,
 \qquad
 G=\sup_{k\in K_N}|g_k|.
\]
Since $\mu$ is positive,
\[
 \sup_{k\in K_N}\mathcal T_{j,k}g_k
 \le
 C M_N(\mathcal H G).
\]
Hence, by \eqref{ltg:eq:HL},
\begin{equation}\label{ltg:eq:vector-infty}
 \left\|\{\mathcal T_{j,k}g_k\}_{k\in K_N}\right\|_{L^p(\ell^\infty)}
 \le
 C B_N
 \left\|\{g_k\}_{k\in K_N}\right\|_{L^p(\ell^\infty)}.
\end{equation}

To interpolate these estimates, first restrict $s$ to a finite subset
of $[1,2]^d$. For each $k$, choose a measurable function $s_k(x)$
taking values in this set and consider the diagonal linear operator
\[
 (g_k)_k
 \longmapsto
 \left(
 A\widetilde\Delta_{j-k}g_k(x,2^ks_k(x))
 \right)_k.
\]
Its norms at the two endpoints are bounded by
\eqref{ltg:eq:vector-p} and \eqref{ltg:eq:vector-infty}, uniformly in
the choices of $s_k$. Since
\[
 [L^p(\ell^p),L^p(\ell^\infty)]_\theta
 =
 L^p(\ell^2),
 \qquad
 1-\theta=\frac p2,
\]
complex interpolation gives the bound
$C2^{-\epsilon p|j|_\infty/2}B_N^{1-p/2}$.

For a fixed input sequence, choose $s_k(x)$ to maximize the absolute
value in each component. Such choices are measurable for a finite
parameter set. Increasing these finite sets through a countable dense
subset of $[1,2]^d$ and using monotone convergence gives
\begin{equation}\label{ltg:eq:vector-2}
 \left\|\{\mathcal T_{j,k}g_k\}_{k\in K_N}\right\|_{L^p(\ell^2)}
 \le
 C2^{-\epsilon p|j|_\infty/2}B_N^{1-p/2}
 \left\|\{g_k\}_{k\in K_N}\right\|_{L^p(\ell^2)}.
\end{equation}

Apply \eqref{ltg:eq:vector-2} with $g_k=\Delta_{j-k}f$ and use
\eqref{ltg:eq:LP}. Since the supremum over $k$ is bounded pointwise by
the $\ell^2$ norm,
\begin{equation}
 \|T_{j,N}f\|_p
 \le
 C2^{-\epsilon p|j|_\infty/2}
 B_N^{1-p/2}\|f\|_p.
\end{equation}
The decay is summable in $j$ by the same counting as in
\eqref{ltg:eq:summability}. Adding the mixed low-frequency terms gives
\begin{equation}\label{ltg:eq:bootstrap}
 B_N
 \le
 C_1+C_2B_N^\alpha,
 \qquad
 \alpha=1-\frac p2\in(0,1),
\end{equation}
with constants independent of $N$.

Recall that $B_N<\infty$ for every $N$. If $B_N>2C_1$, then
\eqref{ltg:eq:bootstrap} gives
$B_N\le2C_2B_N^\alpha$, and hence
$B_N^{1-\alpha}\le2C_2$. Therefore
\[
 B_N
 \le
 \max\left\{
 2C_1,
 (2C_2)^{1/(1-\alpha)}
 \right\}
\]
uniformly in $N$. This completes the induction step for $1<p<2$.

Finally, $M_Nf(x)\uparrow Mf(x)$ as $N\to\infty$. Hence Fatou's
lemma and the uniform bound for $B_N$ give
$\|Mf\|_p\lesssim\|f\|_p$, completing the proof of
Proposition~\ref{ltg:thm:main}.
\end{proof}

\subsection{Necessity and failure at the critical exponents}
\label{sec:global-necessity}

We now return to the surface measure $\sigma_r$ on $S_r$.
We derive the necessary conditions from $M_{\loc}$ using smooth,
nonnegative test functions. The logarithmic truncations below also
establish failure at the critical exponents.

\begin{proposition}\label{prop:global-necessity}
Let $d\ge3$, $2\le r<\infty$, and $1\le p,q<\infty$. If
\[
 \|M_{\loc}f\|_{L^q(\R^d)}
 \le C\|f\|_{L^p(\R^d)}
 \qquad\text{for every }f\in C_c^\infty(\R^d),
\]
then
\[
 p>p_c(d,r),\qquad p\le q\le dp,\qquad
 \frac{r+d-1}{p}-\frac{d-1}{q}\le d-1.
\]
\end{proposition}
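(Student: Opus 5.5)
The plan is to test the inequality on explicit nonnegative smooth functions adapted to the geometry of $S_r$. Each test yields one necessary condition: $p\le q$, $q\le dp$, $p>(d+1)/(d-1)$, $p>r/(d-1)$, and the line $(r+d-1)/p-(d-1)/q\le d-1$. Positivity of $\sigma_r$ lets us bound $M_{\loc}f(x)$ from below by the average at one well-chosen $t=t(x)$. A lower bound $\|M_{\loc}f\|_q\gtrsim X$ combined with $\|f\|_p=Y$ then forces $X\lesssim Y$ as a scale parameter $\rho\to0$ or $\to\infty$.

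The individual tests are as follows.

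\emph{$p\le q$.} Take $f$ to be the indicator, smoothed, of a ball of radius $L\to\infty$. Then $M_{\loc}f\gtrsim1$ on a ball of radius comparable to $L$, and translation invariance gives the standard Hörmander argument.

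\emph{$q\le dp$.} Take $f$ a smooth bump on the ball $B(0,\rho)$. For every $x$ in a fixed region (say $x\in[1,2]^d\cdot S_r^+$, where $S_r^+$ is a compact piece of the positive orthant part of $S_r$), choose $t$ with $x\in \mathrm D_tS_r$ and so that $\mathrm D_tS_r$ passes through $B(0,\rho)$. Because the dilations have $d$ free parameters, we can in fact choose $t$ so that the surface contains a $\rho$-neighbourhood piece of size comparable to $\rho^{d-1}$. This gives $M_{\loc}f\gtrsim\rho^{d-1}$ on a set of measure $\gtrsim1$, while $\|f\|_p=\rho^{d/p}$. Hence $\rho^{d-1}\lesssim\rho^{d/p}$. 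This is the $p>d/(d-1)$ type condition, and I need to sharpen it.

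\emph{$p>(d+1)/(d-1)$ and $q\le dp$.} Use a Knapp-type plate, as in \cite[Appendix~A]{HZ2025}. Let $f$ be the indicator of a $\rho\times\cdots\times\rho\times\rho^2$ slab tangent to $S_r$ at a point of nonvanishing curvature. Then $Af(x,t)\gtrsim\rho^{d-1}$ whenever a dilate $\mathrm D_tS_r$ is tangent to the slab at the appropriate scale. With $d$ parameters, $x$ ranges over a set of measure $\gtrsim1$ for the tangency condition, or over a $\rho$-neighbourhood in the worst direction. Comparing $\rho^{d-1}|E|^{1/q}\lesssim\rho^{(d+1)/p}$ yields both the condition at $q=p$ and the $q\le dp$ boundary, depending on which $x$-set one uses. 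For failure at $p=(d+1)/(d-1)$ itself, sum dyadic plates with logarithmic weights, $f=\sum_{\rho=2^{-k},k\le K}c_k f_\rho$. This makes $\|f\|_p\sim K^{1/p}$ while $\|M_{\loc}f\|_p\gtrsim K$.

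\emph{$r$-dependent conditions.} Exploit the flat points of $S_r$ on coordinate axes, e.g.\ near $e_d$ where $y_d\approx1-\sum|v_i|^r/r$. Take $f$ the indicator of a slab of size $\rho^{1/r}\times\cdots\times\rho^{1/r}\times\rho$ around the tangent plane at $e_d$. The surface piece inside the slab has measure $\rho^{(d-1)/r}$, so $M_{\loc}f\gtrsim\rho^{(d-1)/r}$. This holds wherever some dilate $\mathrm D_tS_r$ is tangent to the slab, which, by varying $t_d$ and the translation in $x_d$ while keeping $x'$ in a $\rho^{1/r}$-box, covers a set of measure $\rho^{(d-1)/r}$ (or measure $\sim1$ in $x_d$). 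We have $\|f\|_p=\rho^{(d-1+r)/(rp)}$. Then
\[
 \rho^{(d-1)/r}\rho^{(d-1)/(rq)}\lesssim\rho^{(r+d-1)/(rp)}
\]
gives $(r+d-1)/p-(d-1)/q\le d-1$. At $q=p$ this reads $p\ge r/(d-1)$. The endpoint $p=r/(d-1)$ is excluded by a logarithmically weighted sum over dyadic $\rho$, exactly as before.

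The main obstacle is the bookkeeping of exactly which set of $x$ admits a good $t$ for each test. In particular, the $q\le dp$ test must be sharp, not merely $q\le(d-1)p$; the independent $t$-dilations must be used there to create an $x$-set of measure $\sim1$ with average $\gtrsim\rho^{d-1}$ from a $\rho$-ball. The second delicate point is the logarithmic endpoint constructions, where the pieces at different scales must have essentially disjoint large-value sets for $M_{\loc}f$.
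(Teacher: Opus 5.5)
There are two genuine gaps: the condition $q\le dp$ is never proved, and the exclusion of the endpoint $p=r/(d-1)$ fails as described. The rest of your proposal is sound and agrees with the paper's examples:
\begin{itemize}
\item $p\le q$ via a large ball;
\item $p>(d+1)/(d-1)$ via plates tangent to a fixed hyperplane at the origin, on a fixed positive-measure set of $x$ supplied by the $d$ free dilations, with a dyadic sum for the endpoint;
\item the line $(r+d-1)/p-(d-1)/q\le d-1$ via the $\rho^{1/r}\times\cdots\times\rho^{1/r}\times\rho$ slab at $e_d$.
\end{itemize}
The tangency you assert can be checked explicitly: $y_j=(x_j/Y)^{1/r}$, $Y=\sum_jx_j$, $t_j=x_j/y_j$ makes the normal of $\mathrm D_tS_r$ at $x$ parallel to $(1,\ldots,1)$.

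\textbf{The condition $q\le dp$.} None of your tests can give it.
\begin{itemize}
\item The $\rho$-ball bump gives $\rho^{d-1}\lesssim\rho^{d/p}$, which does not involve $q$. The plan in your last paragraph (a set of measure $\sim1$ with averages $\gtrsim\rho^{d-1}$) yields only this.
\item Every plate variant gives $\rho^{d-1}|E|^{1/q}\lesssim\rho^{(d+1)/p}$. In the $(1/p,1/q)$-plane this is a line with nonzero constant term, whereas $q=dp$ is the line $1/q=1/(dp)$ through the origin.
\end{itemize}
The missing example is the one-parameter focusing test. Let $f_\delta$ be smooth, $0\le f_\delta\le1$, equal to one on the $\delta$-neighbourhood of $S_r$ and supported in its $2\delta$-neighbourhood; then $\|f_\delta\|_p\lesssim\delta^{1/p}$. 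For $|x|<\delta$, each point $x-y$ with $y\in S_r$ lies within $\delta$ of $-y\in S_r$. Hence $M_{\loc}f_\delta(x)\ge Af_\delta(x,\mathbf 1)=\sigma_r(S_r)$ on a ball of volume $\sim\delta^d$, so $\delta^{d/q}\lesssim\delta^{1/p}$, i.e.\ $q\le dp$.

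\textbf{The endpoint $p=r/(d-1)$.} This case matters for $r>d+1$, and only at $q=p$, since for $q>p$ your line already excludes it. It does not follow ``as before.'' The plate endpoint works because its good set is independent of $\rho$, so all scales add at each good point. For your slabs, the good set $\{|x'|\lesssim\rho^{1/r}\}\times(4/3,5/3)$ shrinks with $\rho$.

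Concretely, take $n=d-1$, $p=q=r/n$, and $L^p$-normalized slabs $g_\rho$. The average at $t=(1,\ldots,1,x_d)$ is $\sim\rho^{-n^2/r^2}$ on a set of measure $\rho^{n/r}$. On $\{|x'|\sim2^{-j/r}\}$ the finest admissible scale dominates geometrically. So for $f=\sum_{k\le K}g_{2^{-k}}$ your lower bound is only $\|M_{\loc}f\|_p\gtrsim K^{1/p}\approx\|f\|_p$, and no contradiction follows. Your remark that the large-value sets should be essentially disjoint describes exactly the situation with no logarithmic gain.

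The fix is to make the slabs horizontally of unit width, i.e.\ to use a function singular along the whole hyperplane $z_d=0$. Take $h(z)=\eta(z')|z_d|^{-n/r}/\log(1/|z_d|)$ for small $|z_d|$, suitably truncated, with $\eta$ a fixed cutoff. Then
\begin{itemize}
\item $\|h\|_{r/n}^{r/n}\lesssim\int_0^{2a}\frac{\dd s}{s[\log(1/s)]^{r/n}}<\infty$;
\item for every $x$ in the fixed set $\{|x'|<a/2,\ 4/3<x_d<5/3\}$, the choice $t=(1,\ldots,1,x_d)$ gives $x-\mathrm D_ty=(x'-v,x_d(1-\Phi(v)))$ with $x_d(1-\Phi(v))\asymp|v|^r$;
\item hence $Ah(x,t)\gtrsim\int_0^{b}\frac{\dd s}{s\log(1/s)}$, which diverges along the truncations.
\end{itemize}
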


\begin{proof}
Put $n=d-1$. Fix a function $\theta\in C^\infty([0,\infty))$
such that $0\le\theta\le1$, $\theta=1$ on $[0,1]$, and
$\theta=0$ on $[2,\infty)$. Choose $a>0$ sufficiently small,
with $2a<e^{-2}$. All constants below may depend on $d,r,a$,
but are independent of the truncation parameter $\varepsilon$.

\medskip
\noindent\emph{The condition $p>(n+2)/n$.}
Let $p_0=(n+2)/n$, let $e=d^{-1/2}(1,\ldots,1)$, and write
$z_\perp=z-(z\cdot e)e$. Define
\[
 \rho(z)=\bigl(|z_\perp|^4+(z\cdot e)^2\bigr)^{1/4}.
\]
This function is smooth away from the origin. The change of variables
$z_\perp=s u$ and $z\cdot e=s^2 v$ shows that
\[
 |\{z:\rho(z)<s\}|=c_d s^{n+2},\qquad s>0.
\]
For $0<\varepsilon<a/4$, set
\[
 f_\varepsilon(z)=
 \theta\left(\frac{\rho(z)}a\right)
 \theta\left(\frac{\varepsilon}{\rho(z)}\right)
 \frac{\rho(z)^{-n}}{\log(1/\rho(z))}
 \qquad (0<\rho(z)<2a),
\]
and set $f_\varepsilon=0$ elsewhere. Then
$f_\varepsilon\in C_c^\infty(\R^d)$, $f_\varepsilon\ge0$, and
\[
 \sup_{0<\varepsilon<a/4}\|f_\varepsilon\|_{p_0}^{p_0}
 \lesssim
 \int_0^{2a}\frac{s^{n+1-np_0}}{[\log(1/s)]^{p_0}}\dd s
 =\int_0^{2a}\frac{\dd s}{s[\log(1/s)]^{p_0}}
 <\infty.
\]
Since the supports lie in a fixed bounded set, their $L^p$ norms
are also uniformly bounded for every $1\le p\le p_0$.

Let
\[
 x^0=\frac32 d^{-1/r}(1,\ldots,1),
\]
and let $U$ be a sufficiently small bounded open neighborhood of $x^0$
whose compact closure lies in the positive orthant. For $x\in U$, put
\[
 Y(x)=\sum_{j=1}^d x_j,\qquad
 y_j(x)=\left(\frac{x_j}{Y(x)}\right)^{1/r},\qquad
 t_j(x)=\frac{x_j}{y_j(x)}.
\]
Thus $y(x)\in S_r$ and $x=\mathrm D_{t(x)}y(x)$.
At $x=x^0$, all the $t_j(x)$ equal $3/2$; after shrinking $U$,
we have $t(x)\in(1,2)^d$ for every $x\in\overline U$.
A normal vector to $\mathrm D_{t(x)}S_r$ at $x$ has components
\[
 \frac{y_j(x)^{r-1}}{t_j(x)}
 =\frac{y_j(x)^r}{x_j}
 =\frac1{Y(x)}.
\]
Consequently, $x-\mathrm D_{t(x)}S_r$ passes through the origin
with tangent space $e^\perp$, independently of $x$.

The points $y(x)$ stay uniformly away from the coordinate
hyperplanes. A neighborhood of the origin in
$x-\mathrm D_{t(x)}S_r$ can therefore be written as
\[
 z=w+g_x(w)e,\qquad w\in e^\perp,\quad |w|<b,
\]
where $b>0$ is independent of $x\in U$ and
\[
 g_x(0)=0,\qquad \nabla g_x(0)=0,\qquad
 |g_x(w)|\le C|w|^2.
\]
The image of $d\sigma_r$ under $y\mapsto x-\mathrm D_{t(x)}y$,
expressed in these coordinates, has a density bounded below by a
positive constant, uniformly in $x$.
Moreover,
\[
 \rho(w+g_x(w)e)
 =\bigl(|w|^4+g_x(w)^2\bigr)^{1/4}
 \asymp |w|.
\]
Decreasing $b$ if necessary, for all sufficiently small $\varepsilon$
the two cutoffs in $f_\varepsilon$ equal one whenever
$C\varepsilon<|w|<b$. Polar coordinates in
$e^\perp$ now give
\[
 M_{\loc}f_\varepsilon(x)
 \ge A f_\varepsilon(x,t(x))
 \gtrsim
 \int_{C\varepsilon}^{b}\frac{\dd s}{s\log(1/s)}
 \longrightarrow\infty
 \qquad(x\in U),
\]
with a lower bound independent of $x$. As $|U|>0$, this contradicts
the assumed estimate for every $1\le p\le p_0$.

\medskip
\noindent\emph{The condition $p>r/n$.}
When $r\le n+2$, this follows from $p>p_0$. It remains to consider
$r>n+2$. Put $p_1=r/n>1$ and choose a nonnegative
$\eta\in C_c^\infty(\R^n)$ that equals one on $\{|z'|\le a\}$
and is supported in $\{|z'|<2a\}$. Set
\[
 h_\varepsilon(z',z_d)=
 \eta(z')
 \theta\left(\frac{|z_d|}a\right)
 \theta\left(\frac{\varepsilon}{|z_d|}\right)
 \frac{|z_d|^{-n/r}}{\log(1/|z_d|)}
 \qquad(0<|z_d|<2a),
\]
and set $h_\varepsilon=0$ elsewhere. These functions belong to
$C_c^\infty(\R^d)$, are nonnegative, and satisfy
\[
 \sup_{0<\varepsilon<a/4}\|h_\varepsilon\|_{p_1}^{p_1}
 \lesssim
 \int_0^{2a}\frac{\dd s}{s[\log(1/s)]^{p_1}}
 <\infty.
\]
Their $L^p$ norms are consequently uniformly bounded for every
$1\le p\le p_1$.

Consider the fixed set
\[
 V=\{x=(x',x_d):|x'|<a/2,\ 4/3<x_d<5/3\}.
\]
For $x\in V$, choose $t=(1,\ldots,1,x_d)\in[1,2]^d$.
Near the north pole, write
\[
 y=(v,\Phi(v)),\qquad
 \Phi(v)=\left(1-\sum_{j=1}^n|v_j|^r\right)^{1/r}.
\]
For sufficiently small $|v|$, uniformly in $x\in V$,
\[
 x-\mathrm D_ty=(x'-v,x_d(1-\Phi(v))),\qquad
 x_d(1-\Phi(v))\asymp\sum_{j=1}^n|v_j|^r\asymp |v|^r.
\]
The surface density in these coordinates is bounded below by one.
Choose $b_1>0$ small enough that $|x'-v|<a$ and
$0<x_d(1-\Phi(v))<a$ whenever $x\in V$ and $0<|v|<b_1$.
For sufficiently small $\varepsilon$ and
$C\varepsilon^{1/r}<|v|<b_1$, all the cutoffs in
$h_\varepsilon(x-\mathrm D_ty)$ equal one. Hence
\[
 M_{\loc}h_\varepsilon(x)
 \ge A h_\varepsilon(x,t)
 \gtrsim
 \int_{C\varepsilon^{1/r}}^{b_1}
 \frac{\dd s}{s\log(1/s)}
 \longrightarrow\infty
 \qquad(x\in V),
\]
uniformly in $x$. Since $|V|>0$, the assumed estimate fails
for every $1\le p\le p_1$.

\medskip
\noindent\emph{The remaining local improving restrictions.}
The lower bounds in the preceding examples are uniform on fixed sets
of positive measure. Hence they exclude $p\le p_c(d,r)$ for every
finite output exponent $q$.

To see that $p\le q$, take a nonzero, nonnegative function
$g\in C_c^\infty(\R^d)$ and form $N$ sufficiently separated translates.
Their input supports are disjoint, and the corresponding translates of
$M_{\loc}g$ have disjoint supports, since the averaging measures for
$t\in[1,2]^d$ have uniformly bounded support. Positivity gives
$N^{1/q}\|M_{\loc}g\|_q\le C N^{1/p}\|g\|_p$.
Letting $N\to\infty$ yields $p\le q$.

For $q\le dp$, choose $f_\delta\in C_c^\infty(\R^d)$ with
$0\le f_\delta\le1$, equal to one in the $\delta$-neighborhood of
$S_r$ and supported in its $2\delta$-neighborhood. Such functions
are obtained by mollifying indicators of slightly larger neighborhoods.
Since $S_r$ is a compact $C^1$ hypersurface,
$\|f_\delta\|_p\lesssim\delta^{1/p}$. For $|x|<\delta$, symmetry of
$S_r$ gives $Af_\delta(x,\mathbf1)=\sigma_r(S_r)$.
Consequently $\delta^{d/q}\lesssim\delta^{1/p}$, which implies $q\le dp$.

Finally, take a smooth nonnegative function supported in a box with
horizontal widths $O(\delta)$ and vertical width $O(\delta^r)$, equal
to one on a slightly smaller box. For $|x'|<c\delta$ and
$4/3<x_d<5/3$, use $t=(1,\ldots,1,x_d)$ and the north-pole
parametrization $y=(v,\Phi(v))$. On $|v|<c\delta$,
$|x'-v|=O(\delta)$ and $x_d(1-\Phi(v))=O(\delta^r)$.
Choosing the box constants in terms of $d,r$ gives
$M_{\loc}f_\delta(x)\gtrsim\delta^n$ on this set, whose volume is
comparable to $\delta^n$. Since
$\|f_\delta\|_p\lesssim\delta^{(n+r)/p}$, we obtain
\[
 \delta^{n+n/q}\lesssim\delta^{(n+r)/p},
\]
which implies $(n+r)/p-n/q\le n$. This proves the proposition.
\end{proof}

Taking $q=p$ and using $M_{\loc}f\le Mf$, the proposition proves the
necessity in
Theorem~\ref{thm:global-Lp}, including failure at the critical
exponents. The bound on $L^\infty$ follows directly from
$\|Mf\|_\infty\le\sigma_r(S_r)\|f\|_\infty$.
Together with the sufficiency established above, this completes
the proof of Theorem~\ref{thm:global-Lp}.

\appendix
\section{Multiparameter local smoothing in dimension three}
\label{app:d3-smoothing}

This appendix supplies the local smoothing input for dimension three.
We fix $r'=r/(r-1)\in(1,2]$ throughout. All frequency
localizations are contained in signed orthants; no smoothness of the
dual phase across a coordinate hyperplane is used. The quadratic
decoupling theorem and its stability under perturbations are taken from
\cite[Theorems~1.1 and~5.2]{GOZZ2023}. We use the conical projection
argument and the variable-coefficient continuation in
\cite[Section~4]{LLO2025-2}. The normalization below records the
quantitative information needed when the frequency support approaches
the coordinate hyperplanes.

\subsection{Quadratic normal forms}

Fixing the ratio of the first two dilation parameters, and making a
bounded diagonal change of spatial and frequency variables, reduces
the phase to
\begin{equation}\label{eq:app-d3-reduced-phase}
 \Phi(t,s,\theta)=tS^{1/r'},\qquad
 S=s^{r'}+\theta_1^{r'}+\theta_2^{r'}.
\end{equation}
Here $t,s$ lie in a fixed compact subset of $(0,\infty)$ and the signs of
$\theta_1,\theta_2$ have been fixed and reflected to be positive.
We write $z=(t,s)$. The remaining dilation ratio is integrated over a
bounded interval after the estimate for $(t,s)$ has been established.

\begin{lemma}[Quadratic normal form]
\label{lem:d3-quadratic-normal-form}
Put
\[
 H=\diag(\theta_1^{r'-2},\theta_2^{r'-2}),\qquad
 v=(\theta_1^{r'-1},\theta_2^{r'-1})^{\mathsf T},\qquad
 c=(r'-1)S^{1/r'-1}.
\]
Then
\begin{align}
 A:=\partial_\theta^2\partial_t\Phi
   &=c\left(H-\frac{vv^{\mathsf T}}S\right),
       \label{eq:app-d3-hessian-A}\\
 B:=\partial_\theta^2\partial_s\Phi
   &=\frac{ct s^{r'-1}}S
       \left(-(r'-1)H+\frac{(2r'-1)vv^{\mathsf T}}S\right).
       \label{eq:app-d3-hessian-B}
\end{align}
At every such point, the pair of quadratic forms with Hessians $A,B$
is equivalent, by invertible linear changes in the angular and
parameter variables, to
\[
 Q_0(w)=(w_1^2,w_2^2).
\]
If $\theta_i\ge\kappa$ and $|\theta|\lesssim1$, all derivatives through
any prescribed finite order, and the norms of these linear changes
and their inverses, are bounded by a fixed power of $\kappa^{-1}$.
The exponent may depend on $r'$ and the derivative order.
\end{lemma}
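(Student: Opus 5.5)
We first compute the two Hessians directly. Since $\Phi=tS^{1/r'}$ with $S=s^{r'}+\theta_1^{r'}+\theta_2^{r'}$, we have $\partial_t\Phi=S^{1/r'}$. Write $\partial_{\theta_i}S=r'\theta_i^{r'-1}=r'v_i$. Then
\[
 \partial_{\theta_i}S^{1/r'}=S^{1/r'-1}v_i .
\]
Differentiating once more gives
\[
 \partial_{\theta_j}\partial_{\theta_i}S^{1/r'}
 =S^{1/r'-1}(r'-1)\theta_i^{r'-2}\delta_{ij}
 +(1/r'-1)r'S^{1/r'-2}v_iv_j .
\]
Since $(1/r'-1)r'=-(r'-1)$, this is exactly $c(H-vv^{\mathsf T}/S)$, which proves \eqref{eq:app-d3-hessian-A}.

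For $B$, differentiate $\partial_s\Phi=tS^{1/r'-1}s^{r'-1}$ twice in $\theta$. With $\alpha=1/r'-1=-(r'-1)/r'$, the second $\theta$-derivative of $S^{\alpha}$ is
\[
 \alpha S^{\alpha-1}r'(r'-1)H+\alpha(\alpha-1)S^{\alpha-2}(r')^2vv^{\mathsf T}.
\]
Here $\alpha r'=-(r'-1)$ and $\alpha(\alpha-1)(r')^2=(r'-1)(2r'-1)$. Factoring out $cts^{r'-1}/S$ then gives \eqref{eq:app-d3-hessian-B}.

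For the normal form, the point is that both $A$ and $B$ lie in the span of the two symmetric matrices $H$ and $vv^{\mathsf T}/S$. We first apply the angular change $w\mapsto H^{-1/2}w$; this is legitimate since $H$ is diagonal and positive. The pair then becomes a pair of combinations of $I$ and $uu^{\mathsf T}$, where $u=H^{-1/2}v/\sqrt S$. Next we rotate so that $u$ is parallel to $e_1$, which turns $uu^{\mathsf T}$ into $|u|^2e_1e_1^{\mathsf T}$. In these coordinates both forms are diagonal and lie in the span of $\diag(1,1)$ and $\diag(1,0)$. The span is two-dimensional if and only if the coefficient matrix
\[
 \begin{pmatrix} c & -c\\ -(r'-1)c' & (2r'-1)c'\end{pmatrix},
 \qquad c'=cts^{r'-1}/S ,
\]
is invertible. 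Its determinant is $cc'\,r'>0$. Multiplying by the inverse of this $2\times2$ matrix, which is a linear change in the parameter variables $(t,s)$, converts the pair to $(w_1^2,w_2^2)$ after one further diagonal rescaling in $w$. The one degenerate-looking case is $|u|^2$ near the value at which $I-|u|^2e_1e_1^{\mathsf T}$ becomes singular. This is harmless, because we never invert $A$ itself, only the coefficient matrix in the basis $\{I,e_1e_1^{\mathsf T}\}$. We also need $|u|^2>0$; indeed $|u|^2=\sum\theta_i^{r'}/S\gtrsim\kappa^{r'}$.

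The quantitative part is where the care lies, and we expect it to be the main, if routine, obstacle. With $\theta_i\ge\kappa$, $|\theta|\lesssim1$, and $s$ in a fixed compact set, we have $S\asymp1$ and $c,c'\asymp1$. The entries of $H^{\pm1/2}$ are $\theta_i^{\pm(r'-2)/2}$, and these are bounded by $\kappa^{-(2-r')/2}$ because $r'\le2$. Derivatives of $\theta_i^{\gamma}$ of order $m$ cost at most $\kappa^{-m}$ times a bounded power. The rotation sending $u/|u|$ to $e_1$ depends smoothly on $u/|u|$, and $|u|$ is bounded below by a power of $\kappa$. The diagonal rescaling involves $|u|^{\pm2}$ and the diagonal entries, which again gives powers of $\kappa^{-1}$. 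The $2\times2$ parameter change has determinant $\asymp1$. Composing these finitely many maps and applying the chain rule, every derivative through a prescribed order, and every norm of these changes and their inverses, is bounded by $C\kappa^{-P}$ with $P$ depending on $r'$ and the order. We will carry out this bookkeeping explicitly, tracking exponents only crudely.
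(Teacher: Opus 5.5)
Your proposal is correct and follows essentially the paper's route. Both arguments compute the same Hessians, observe that $A,B$ span the same space as $H$ and $vv^{\mathsf T}$, normalize by $H^{-1/2}$ and a rotation, and check a nonvanishing $2\times2$ coefficient determinant. Your $cc'r'|u|^2$ is exactly the paper's $r'(r'-1)^2ts^{r'-1}(\theta_1^{r'}+\theta_2^{r'})S^{2/r'-4}\gtrsim\kappa^{r'}$, and the polynomial bookkeeping in $\kappa^{-1}$ is the same.

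One small slip is in the last step. After inverting your coefficient matrix, the pair is $(I,|u|^2e_1e_1^{\mathsf T})$, and no diagonal rescaling of $w$ turns this into $(w_1^2,w_2^2)$. The final step should instead be a further parameter combination such as $(P_1,P_2)\mapsto(|u|^{-2}P_2,\,P_1-|u|^{-2}P_2)$, together with the factor $1/2$. This costs only another power of $\kappa^{-1}$.
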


\begin{proof}
Differentiation gives \eqref{eq:app-d3-hessian-A} and
\eqref{eq:app-d3-hessian-B}. In particular,
\begin{equation}
 B+\frac{(r'-1)t s^{r'-1}}S A
 =r'(r'-1)t s^{r'-1}S^{1/r'-3}vv^{\mathsf T}.
\end{equation}
Thus $\operatorname{span}\{A,B\}=\operatorname{span}\{H,vv^{\mathsf T}\}$.
The change of angular increments by $H^{-1/2}$ transforms these latter
two matrices into the identity and $ww^{\mathsf T}$, where
\[
 w=(\theta_1^{r'/2},\theta_2^{r'/2})^{\mathsf T}.
\]
A rotation sends $w$ to $|w|e_1$. In these coordinates the determinant of the coefficient
matrix expressing the two transformed Hessians in terms of the radial
and transverse diagonal matrices has absolute value
\begin{equation}\label{eq:app-d3-coefficient-determinant}
 r'(r'-1)^2t s^{r'-1}
 (\theta_1^{r'}+\theta_2^{r'})S^{2/r'-4}>0.
\end{equation}
An invertible linear combination of the parameter directions therefore
gives the two coordinate squares. The factor $1/2$ converting Hessians
into quadratic forms is included in this last change of variables.

On the indicated compact parameter set, $S$ is bounded above and below.
The entries of $H^{\pm1/2}$ have polynomial bounds in $\kappa^{-1}$,
and the determinant in \eqref{eq:app-d3-coefficient-determinant} is
bounded below by $c_{r'}\kappa^{r'}$. Matrix inversion and differentiation
give the asserted polynomial bounds. The rotation here acts only on
the frozen angular increments; it does not assert rotational symmetry
of \eqref{eq:app-d3-reduced-phase}.
\end{proof}

\subsection{Uniform two-parameter decoupling}

We first prove a variable-coefficient decoupling estimate uniform
over the following class of phases. Angular partitions below consist of smooth,
finitely overlapping squares of the stated side length. They leave
the radial variable $\rho\in[1/2,2]$ undecomposed. Norms on a bounded
output set may equivalently be formulated with a rapidly decreasing
weight on a slightly larger set, as in
\cite[Sections~2.3--2.7]{BHS2020}.

\begin{lemma}[Uniform two-parameter decoupling]
\label{lem:d3-uniform-decoupling}
For every $\varepsilon>0$ and remainder order $M$ there are a finite
integer $N$ and a constant $\epsilon_0>0$ with the following property.
Let $\Psi(z,\theta)$ be defined on fixed boxes in
$\R^2_z\times\R^2_\theta$, with fixed support margins, and suppose that
\[
 \Psi(0,\theta)=0,\qquad
 \Psi(z,0)=\partial_\theta\Psi(z,0)=0.
\]
Writing $E(z,\theta)=\Psi(z,\theta)-z\cdot Q_0(\theta)$, assume
\begin{equation}\label{eq:app-d3-normalized-class}
 \sup|\partial_z^\alpha\partial_\theta^\gamma E|
 \le\epsilon_0
 \quad
 \left(1\le|\alpha|,\ 2\le|\gamma|,
              \ |\alpha|+|\gamma|\le N\right).
\end{equation}
For a smooth amplitude supported in a fixed output box, with bounded
derivatives through order $N$, set
\[
 \mathcal E_\lambda^\Psi g(x,z)
 =a(x,z)\iint
 e^{i\lambda\rho(x'\cdot\theta+x_3+\Psi(z,\theta))}
 g(\theta,\rho)\dd\theta\dd\rho.
\]
If $\lambda\ge1$ and $g_\nu$ denotes the angular localization of $g$
to a square of side $\lambda^{-1/2}$, then
\begin{equation}\label{eq:app-d3-variable-decoupling}
 \|\mathcal E_\lambda^\Psi g\|_6
 \lesssim_{\varepsilon,M}
 \lambda^{1/3+\varepsilon}
 \left(\sum_\nu\|\mathcal E_\lambda^\Psi g_\nu\|_6^6\right)^{1/6}
 +\lambda^{-M}\|g\|_2.
\end{equation}
The right hand side is understood with the customary enlarged smooth
output cutoff, or the corresponding weighted norms. The constants are
uniform over the class \eqref{eq:app-d3-normalized-class}. Smooth
frequency-dependent amplitudes with the same finite derivative bounds
are also permitted.
\end{lemma}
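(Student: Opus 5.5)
The plan is to reduce Lemma~\ref{lem:d3-uniform-decoupling} to the constant-coefficient quadratic decoupling theorem for the two-parameter model $z\cdot Q_0(\theta)=z_1\theta_1^2+z_2\theta_2^2$, together with its perturbative stability, as in \cite[Theorems~1.1 and~5.2]{GOZZ2023}. The model phase $\rho(x'\cdot\theta+x_3+z\cdot Q_0(\theta))$ in the five variables $(x,z)$ is a conical extension of the surface $\theta\mapsto(\theta,1,\theta_1^2,\theta_2^2)$. I would first perform the conical projection of \cite[Section~4]{LLO2025-2}. Freezing the radial variable $\rho$ and exploiting homogeneity removes $\rho$ and identifies the operator, on each $\lambda$-ball, with an extension operator for a two-dimensional quadratic manifold in $\R^4$ that is nondegenerate in the sense of \cite{GOZZ2023}. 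For the product form $(\theta_1^2,\theta_2^2)$, the $\ell^6$ decoupling constant at frequency scale $\lambda^{-1/2}$ is $\lambda^{1/3+\varepsilon}$; this matches the exponent in \eqref{eq:app-d3-variable-decoupling} once the number of caps, $\lambda$, is accounted for.

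Next I would pass to the variable-coefficient phase $\Psi$. The normalizations $\Psi(0,\theta)=0$ and $\Psi(z,0)=\partial_\theta\Psi(z,0)=0$ remove the parts of $\Psi$ affine in $\theta$, and the parts depending only on $z$; these can be absorbed by a change of the $x$ variables depending on $z$. What remains is controlled by \eqref{eq:app-d3-normalized-class}, which makes the mixed Hessian $\partial_z\partial_\theta^2\Psi$ an $\epsilon_0$-perturbation of that of $z\cdot Q_0$ in $C^{N}$. I would then run the standard induction on scales, following \cite[Sections~2.3--2.7]{BHS2020}.

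\begin{itemize}
\item[(i)] Partition the output $(x,z)$-box into balls of radius $K\ll\lambda^{1/2}$. On each ball, Taylor expand $\Psi$ at the centre. Up to an error that is negligible on caps of size $K^{-1/2}$, the phase is then a constant-coefficient phase. That phase is a linear image of $z\cdot Q_0(\theta)+\epsilon_0$-small terms, and so lies in the stable class of \cite[Theorem~5.2]{GOZZ2023}.
\item[(ii)] Apply the uniform constant-coefficient decoupling from angular scale $1$ down to $K^{-1/2}$.
\item[(iii)] Rescale each $K^{-1/2}$ cap parabolically. The rescaled phase again satisfies the normalizations and \eqref{eq:app-d3-normalized-class}, with the same $\epsilon_0$ (indeed with better constants, since the higher-order terms shrink), at frequency $\lambda/K$. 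Induct on $\lambda$.
\end{itemize}

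Iterating these steps multiplies constants of size $K^{\varepsilon}$ over $\log\lambda/\log K$ steps, which gives $\lambda^{1/3+2\varepsilon}$. The $\lambda^{-M}\|g\|_2$ term collects the tails produced by the Schwartz weights and by the integration by parts used to localize. Choosing $N$ large, depending on $M$ and $\varepsilon$, controls these tails. Frequency-dependent amplitudes are handled by the Fourier-series expansion already used in Lemma~\ref{lem:sector-kernel}.

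The main obstacle is step (iii): the class \eqref{eq:app-d3-normalized-class} must be preserved under parabolic rescaling of a cap centred at $\theta_0\neq0$, uniformly. After subtracting the terms affine in $\theta-\theta_0$, the rescaled quadratic part is $z\cdot Q_0$ composed with $\partial_\theta^2\Psi(z,\theta_0)$. This composition need not be the identity. I would therefore renormalize it by an invertible linear change in $z$ and $\theta$, with norms $1+O(\epsilon_0)$, again using the structure from Lemma~\ref{lem:d3-quadratic-normal-form}. The point to check is that this renormalization keeps the perturbation below $\epsilon_0$, rather than letting it accumulate over the scales. I expect the gain from rescaling, namely the factor $K^{-1/2}$ on cubic and higher terms, to absorb the $O(\epsilon_0)$ drift once $K$ is chosen large depending on $\varepsilon$.
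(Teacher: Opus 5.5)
Your architecture matches the paper's: uniform constant-coefficient $\ell^6$ decoupling for conic models near $Q_0$ from \cite{GOZZ2023} via the conical projection, freezing on output boxes of side $K/\lambda$, coarse decoupling into $K^{-1/2}$ caps, parabolic rescaling, and induction. There are, however, two genuine problems.

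\textbf{The class-preservation step.} The step you flag as the main obstacle is not an obstacle, and the fix you propose is not supported. The class \eqref{eq:app-d3-normalized-class} does not require the quadratic part to be exactly $z\cdot Q_0$; it already allows Hessian deviations $\partial_\theta^2E$ of size $\epsilon_0$. Because $Q_0$ is exactly quadratic, its second-order Taylor remainder at $\theta_\nu$ is exactly $K^{-1}Q_0(u)$. So for
\[
 \Psi_\nu(z,u)=K\bigl(\Psi(z,\theta_\nu+K^{-1/2}u)-\Psi(z,\theta_\nu)-K^{-1/2}\partial_\theta\Psi(z,\theta_\nu)\cdot u\bigr)
\]
one has the exact identity
\[
 \partial_z^\alpha\partial_u^\gamma\bigl(\Psi_\nu-z\cdot Q_0(u)\bigr)
 =K^{1-|\gamma|/2}(\partial_z^\alpha\partial_\theta^\gamma E)(z,\theta_\nu+K^{-1/2}u),
 \qquad |\gamma|\ge2,
\]
and the three vanishing conditions persist. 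Hence the class is preserved with the same $\epsilon_0$, with no renormalization and no drift. This is what your step (iii) asserts, and this one-line computation is its proof.

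The mechanism you invoke instead cannot work. At the Hessian level $K^{1-|\gamma|/2}=1$, so the $K^{-1/2}$ gain on cubic terms does nothing to offset second-order changes introduced by a renormalization. Nothing in your argument then prevents the $1+O(\epsilon_0)$ distortions from compounding over the $\sim\log\lambda/\log K$ steps. In any case, a linear change in $z$ could remove only the $z$-linear part of $\partial_\theta^2E(z,\theta_\nu)$, so the class must tolerate Hessian deviations anyway.

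\textbf{The remainder.} Inducting on $\lambda$ alone with remainder $\lambda^{-M}\|g\|_2$ does not close. Pulling the inductive remainder $(\lambda/K)^{-M}\|\widetilde g_\nu\|_2$ back through the rescaling costs a factor $K^{M+O(1)}$ relative to $\lambda^{-M}$ at every step, and taking $N$ large does not help.

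The paper uses the two-scale induction of \cite[Section~2.7]{BHS2020}. It works with a constant $D(\lambda;R)$ on output balls of radius $R/\lambda$, where $R\le\lambda^{1-\eta}$, and a remainder $R^{C_0}(\lambda/R)^{-c\eta N}\|g\|_2$. Since $\lambda/R$ is invariant under $(\lambda,R)\mapsto(\lambda/K,R/K)$, the factor $K^{-C_0}$ absorbs the $K^{3/2+\varepsilon/4}$ losses from Jacobians, coverings and coarse decoupling. This gives $D(\lambda;R)\le C_\varepsilon K^{-\varepsilon/4}D(\lambda/K;R/K)+C$.

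One then sets $R=\lambda^{1-\eta}$ and refines $R^{-1/2}$ caps to $\lambda^{-1/2}$ caps at cost $(\lambda/R)^{5/6}$. Finally one chooses $N$ with $c\eta^2N\ge M+(1-\eta)C_0+5\eta$, which yields the $\lambda^{-M}\|g\|_2$ term. You cite these sections, but the iteration you describe does not use this two-scale structure.
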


\begin{proof}
For $Q_0$, the quadratic decoupling exponent at $6$ is $1/3$.
Theorems~1.1 and~5.2 of \cite{GOZZ2023}, together with the conical
projection argument of \cite[Section~4.2]{LLO2025-2}, give
uniform $\ell^6$ decoupling with exponent $1/3+\varepsilon/4$ for the
constant-coefficient conic models in a fixed neighborhood of $Q_0$.
We explain the continuation to \eqref{eq:app-d3-normalized-class}; this
also identifies the uniformity required below.

Fix a small number $\eta>0$, to be chosen in terms of $\varepsilon$.
As in the weighted induction in \cite[Section~2.7]{BHS2020}, keep both
the oscillation scale $\lambda$ and an output radius $R$, with
$1\le R\le\lambda^{1-\eta}$, until the last step. In coordinates
scaled by $\lambda$, a ball of radius $R$ corresponds to an output
ball of radius $R/\lambda$. Fix a large number $K$, independent of
$R,\lambda$. On output boxes of side $K/\lambda$, expand in the two
parameter variables about the center $z_0$. The term $\Psi(z_0,\theta)$ is absorbed into the arbitrary
frequency-side input $g$. Constant and linear angular terms in
$\partial_z\Psi(z_0,\theta)$ are removed by translations of the spatial
variables. The remaining phase, in output coordinates of size $K$,
is the conic quadratic model plus an error whose finitely many
frequency derivatives are bounded by
\begin{equation}\label{eq:app-d3-finite-scale-error}
 C_N\bigl(K\epsilon_0+K^2/\lambda\bigr).
\end{equation}
Indeed, the first term follows by integrating
\eqref{eq:app-d3-normalized-class} twice in $\theta$; the second is the
Taylor remainder in $z$. Choose $\epsilon_0\le c_NK^{-1}$ and restrict
first to $\lambda\ge C_NK^2$. Fourier series expansion of the exponential
of this error, in the genuine frequency coordinates
$\omega=\rho(\theta,1)$, compares the two operators with uniformly
summable coefficients. The modulations arising from this expansion
are spatial translations. Applying the comparison in both directions
transfers the conic decoupling inequality to these output boxes.
Summing sixth powers over a bounded-overlap cover gives coarse
decoupling into angular squares of side $K^{-1/2}$, at a cost
\begin{equation}\label{eq:app-d3-coarse-cost}
 C_\varepsilon K^{1/3+\varepsilon/4}.
\end{equation}
For the weighted comparison, first localize $z$ to a smooth enlarged
coarse parameter box. We use the constant-coefficient conic inequality
with this same parameter localization and the permitted enlarged
spatial cutoff. The error in \eqref{eq:app-d3-finite-scale-error}
depends only on $z$ and the genuine frequency variables $\omega$,
and its finite-order derivatives are uniformly bounded on this box.
The Fourier coefficients $c_m(z),d_m(z)$ of its exponential and inverse
exponential therefore satisfy, for any prescribed weight order $L$,
\[
 \sum_{m\in\Z^3}(1+|m|)^L
 \sup_z\bigl(|c_m(z)|+|d_m(z)|\bigr)\le C_N,
\]
provided $N$ is sufficiently large. In the scaled output coordinates,
the resulting modulations translate only the spatial variables; the
parameter cutoff is unchanged. For the rapidly decreasing output
weights, we have
$w_B(x-m,z)\lesssim_L(1+|m|)^Lw_B(x,z)$.
Thus both comparisons hold in the weighted norms, with uniformly
summable coefficients. The weighted comparison controls the
$K\epsilon_0$ term uniformly in $\lambda$. The remainder estimates
used in the induction are established below.

Choose the weights $w_b$ of the coarse cover of an output ball $B_R$
so that $\sum_b w_b\lesssim w_{B_R}$, using the same enlarged
parameter cutoffs. Then
\[
 \sum_b\|F\|_{L^6(w_b)}^6\lesssim\|F\|_{L^6(w_{B_R})}^6.
\]
Applying this to each cap contribution and interchanging the two
finite sums combines the coarse right hand sides without a factor
equal to the number of boxes. The remaining Taylor and Fourier-series
tails are handled by the weighted approximation argument in
\cite[Sections~2.6--2.7]{BHS2020}. It uses Taylor expansion and
integration by parts in the present five output variables, and its
remainder is retained in the two-scale form below until the last step.

For completeness, closure under the rescaling used to iterate this
step is particularly simple here. If $\theta_\nu$ is the center of
a $K^{-1/2}$ square, put
\begin{equation}
\begin{split}
 \Psi_\nu(z,u)=K\bigl(&\Psi(z,\theta_\nu+K^{-1/2}u)
                       -\Psi(z,\theta_\nu)\\
                     &-K^{-1/2}\partial_\theta\Psi(z,\theta_\nu)
                                      \cdot u\bigr).
\end{split}
\label{eq:app-d3-exact-rescaling}
\end{equation}
The angular-affine terms are removed for every $z$.
Consequently $\Psi_\nu$ satisfies the same three vanishing conditions.
For $|\gamma|\ge2$,
\[
 \partial_z^\alpha\partial_u^\gamma
   \bigl(\Psi_\nu-z\cdot Q_0(u)\bigr)
 =K^{1-|\gamma|/2}
   (\partial_z^\alpha\partial_\theta^\gamma E)
          (z,\theta_\nu+K^{-1/2}u).
\]
Thus the class \eqref{eq:app-d3-normalized-class} is preserved, with
no accumulating enlargement of $\epsilon_0$. Recentering a parameter
box also preserves the class after subtracting its value at the new
center into the frequency-side input and again removing the full
angular-affine part. The exact spatial changes
are
\[
 y'=K^{1/2}\bigl(x'+\partial_\theta\Psi(z,\theta_\nu)\bigr),\qquad
 y_3=K\bigl(x_3+x'\cdot\theta_\nu+\Psi(z,\theta_\nu)\bigr).
\]
They turn the cap contribution into an operator at frequency
$\lambda/K$ with phase $\Psi_\nu$. Their spatial Jacobian is constant;
it cancels between the two sides of decoupling. The transformed output
sets are covered with bounded overlap and summed in sixth power, so
their volume does not cause a loss in $K$. This is the parabolic
rescaling step of \cite[Section~2.5]{BHS2020} in the present
spatially translation-invariant setting.

We now carry out the induction, including the remainder estimates.
Let
$D(\lambda;R)$ be the smallest uniform constant in decoupling on an
output ball of radius $R/\lambda$, with main coefficient
$D(\lambda;R)R^{1/3+\varepsilon/2}$, angular scale $R^{-1/2}$, and
remainder
\begin{equation}\label{eq:app-d3-weighted-remainder}
 R^{C_0}(\lambda/R)^{-c\eta N}\|g\|_2.
\end{equation}
The fixed integer $C_0$ is chosen larger than the dimension-dependent
powers from the coverings and Jacobians, and $c>0$ is a fixed small
constant. The weighted approximation argument just described supplies
this remainder: the Fourier-series tails are integrated by parts
a fixed fraction of $N$ times, and the finitely many amplitude factors
depending on the center of the small output box are expanded to a
comparable order on a region
of diameter $R/\lambda$. This is exactly the role of the two scales
in \cite[Sections~2.6--2.7]{BHS2020}. The calculation uses no
one-parameter curvature theorem.

After \eqref{eq:app-d3-exact-rescaling}, cover the transformed ball by
balls of radius $R/K$ in the coordinates scaled by $\lambda/K$.
Taking sixth powers before summing ensures bounded overlap, without a
factor equal to the number of balls. Both scales change by the same
factor:
\[
 (\lambda,R)\longmapsto(\lambda/K,R/K),\qquad
 \frac{\lambda/K}{R/K}=\frac\lambda R.
\]
In particular, the small factor in
\eqref{eq:app-d3-weighted-remainder} is preserved. Under this
rescaling, the angular substitution has Jacobian $K^{-1}$ and the
spatial change has Jacobian $K^2$. Thus, on returning to the original
variables, the output $L^6$ norm is multiplied by $K^{-4/3}$,
while the rescaled
input $\widetilde g_\nu(u,\rho)=g_\nu(\theta_\nu+K^{-1/2}u,\rho)$
satisfies $\|\widetilde g_\nu\|_2=K^{1/2}\|g_\nu\|_2$.
The transformed ball is covered by $O(K^2)$ balls of the new radius.
Even using the triangle inequality for these remainders gives
\[
 K^{7/6}(R/K)^{C_0}(\lambda/R)^{-c\eta N}\|g_\nu\|_2.
\]
Including \eqref{eq:app-d3-coarse-cost}, the factor relative to
\eqref{eq:app-d3-weighted-remainder} is
$C_\varepsilon K^{3/2+\varepsilon/4-C_0}$.
Choose $C_0$ sufficiently large, and use
$(\sum_\nu\|g_\nu\|_2^6)^{1/6}\lesssim\|g\|_2$.
The resulting geometric sum is absorbed into the remainder in
\eqref{eq:app-d3-weighted-remainder}. Summing the main terms in sixth
power and using bounded overlap gives
\[
 D(\lambda;R)
 \le C_\varepsilon K^{-\varepsilon/4}
       D(\lambda/K;R/K)+C,
\]
where the constant term accounts for the remainder and contributions
from bounded scales. This induction is used for $R\ge C_NK^4$; smaller
$R$ are covered by trivial finite decoupling. Its hypotheses are
preserved, since $R\le\lambda^{1-\eta}$ implies
$R/K\le(\lambda/K)^{1-\eta}$. First choose $K$ so that
$C_\varepsilon K^{-\varepsilon/4}\le1/2$, and then choose
$\epsilon_0\le c_NK^{-1}$. Induction in $R$ yields a uniform bound
for $D(\lambda;R)$.

Finally take $R=\lambda^{1-\eta}$ and cover the original bounded
output set by balls of radius $R/\lambda$. The main terms sum in
sixth power with bounded overlap. Each angular square of side
$R^{-1/2}$ contains $O(\lambda/R)$ squares of side
$\lambda^{-1/2}$. The triangle inequality followed by H\"older's
inequality therefore refines the former partition to the latter at
cost at most $(\lambda/R)^{5/6}$. The total main exponent is
\[
 (1-\eta)(1/3+\varepsilon/2)+5\eta/6
 \le 1/3+\varepsilon
\]
if $\eta$ is sufficiently small in terms of $\varepsilon$.
The number of output balls is $O((\lambda/R)^5)$. Even summing
the remainders by the triangle inequality, their total is at most
\[
 C(\lambda/R)^5 R^{C_0}(\lambda/R)^{-c\eta N}\|g\|_2
 =C\lambda^{(1-\eta)C_0+5\eta-c\eta^2N}\|g\|_2.
\]
Choose $N$ so that $c\eta^2N\ge M+(1-\eta)C_0+5\eta$.
This gives the required $C\lambda^{-M}\|g\|_2$ remainder and proves
\eqref{eq:app-d3-variable-decoupling}. Smooth frequency-dependent
amplitudes are handled by the same finite Fourier expansions.

The comparison is performed at the fixed scale $K$. The exact
rescaling \eqref{eq:app-d3-exact-rescaling} then extends the estimate
to all frequencies $\lambda\ge1$.
\end{proof}

\subsection{Conic smoothing away from the coordinate axes}

\begin{proposition}[Conic smoothing with polynomial constants]
\label{prop:d3-conic-smoothing}
Let $\mathcal T$ be a fixed compact subset of $(0,\infty)^3$, and put
\[
 h_{r'}(\xi)=\bigl(|\xi_1|^{r'}+|\xi_2|^{r'}+|\xi_3|^{r'}\bigr)^{1/r'}.
\]
Let $0<\kappa\le1/4$. Suppose $b_{\lambda,\kappa}(t,\xi)$ is supported where
$|\xi|\sim\lambda$ and $|\xi_i|\ge\kappa|\xi|$ for $i=1,2,3$.
Assume its order-zero symbol seminorms through a sufficiently large
finite order, including derivatives in $t$, are bounded by a fixed
power of $\kappa^{-1}$, uniformly in $\lambda\ge1$. More precisely,
we assume $\lambda^{|\alpha|}|\partial_\xi^\alpha\partial_t^\beta
b_{\lambda,\kappa}|\le C\kappa^{-A_0}$ up to that order, on a fixed
parameter neighborhood of $\mathcal T$. Then, for either choice of sign and every
$\varepsilon>0$, there is a finite $A=A(r,\varepsilon,b)$ such that
\begin{equation}\label{eq:app-d3-smoothing}
 \left\|\int e^{i(x\cdot\xi\,\pm h_{r'}(\mathrm D_t\xi))}
       b_{\lambda,\kappa}(t,\xi)\widehat f(\xi)\dd\xi
 \right\|_{L^6(\R^3\times\mathcal T)}
 \lesssim_{r,\varepsilon,b}
 \kappa^{-A}\lambda^{1/3+\varepsilon}\|f\|_6.
\end{equation}
The assertion also applies to the finitely many order-zero amplitudes
produced by normalized differentiation in $t$.
\end{proposition}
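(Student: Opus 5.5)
The plan is to reduce \eqref{eq:app-d3-smoothing} to the uniform decoupling inequality of Lemma~\ref{lem:d3-uniform-decoupling} and then run the standard decoupling-to-local-smoothing argument of \cite[Section~4]{LLO2025-2}, keeping track of all constants as powers of $\kappa^{-1}$.

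\emph{Reduction.} Localize $t$ to small boxes of the fixed compact set $\mathcal T$; there are boundedly many. Freeze the ratio of two dilation parameters, as in \eqref{eq:app-d3-reduced-phase}. Fix signs, so that $\xi$ lies in one open orthant with $|\xi_i|\ge\kappa|\xi|$. Write $\xi=\lambda\rho(\theta,1)$, in conic coordinates adapted to one distinguished coordinate; then $h_{r'}(\mathrm D_t\xi)=\lambda\rho\,\Phi(z,\theta)$ with $\Phi$ as in \eqref{eq:app-d3-reduced-phase} and $\theta_i\gtrsim\kappa$.

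\emph{Localization and normalization.} Decompose the angular support into $O(\kappa^{-C})$ pieces of side $c\kappa^{C}$. On each piece, around a center $(z_0,\theta_0)$, subtract the terms of $\Phi$ that are affine in $\theta$ or independent of $z$. These can be absorbed into spatial translations and into the input $g$, so the three vanishing conditions of Lemma~\ref{lem:d3-uniform-decoupling} hold. Next apply Lemma~\ref{lem:d3-quadratic-normal-form}: its linear changes in $\theta$ and $z$ bring the mixed Hessians to $Q_0$, and have norms and inverses bounded by $\kappa^{-C}$. Finally rescale the piece by a fixed power of $\kappa$ in $\theta$ (parabolic rescaling, as in \eqref{eq:app-d3-exact-rescaling}), so that the third-order Taylor error $E$ satisfies \eqref{eq:app-d3-normalized-class} with the fixed $\epsilon_0$. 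This rescaling changes $\lambda$ to $\lambda\kappa^{C}$ and produces Jacobian factors; all of these are polynomial in $\kappa^{-1}$. The symbol bounds on $b_{\lambda,\kappa}$ supply the finite-order amplitude bounds after the same rescaling, again with a $\kappa^{-A_0-C}$ loss.

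\emph{Decoupling to smoothing.} With this normalization, Lemma~\ref{lem:d3-uniform-decoupling} gives $\ell^6$ decoupling into angular sectors of side $\lambda^{-1/2}$ at cost $\lambda^{1/3+\varepsilon}$. Each sector operator is handled by the usual kernel estimate. At fixed $t$, its kernel is a smooth bump adapted to a $1\times\lambda^{-1/2}\times\lambda^{-1/2}$ slab, with $L^1$ norm $O(\kappa^{-C})$ uniformly in $t$; this follows by integration by parts using the symbol bounds. Hence it is bounded on $L^6(\R^3\times\mathcal T)$ by a sector-localized $\|f_\nu\|_6$. Conclude with the standard bound $(\sum_\nu\|f_\nu\|_6^6)^{1/6}\lesssim\|f\|_6$, obtained by interpolating $\ell^2L^2$ and $\ell^\infty L^\infty$ for the sector projections. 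Summing over the $O(\kappa^{-C})$ localizations gives $\kappa^{-A}\lambda^{1/3+\varepsilon}\|f\|_6$. The integral over the frozen dilation ratio is then handled by Minkowski's inequality over its bounded interval. The normalized $t$-derivative amplitudes satisfy the same hypotheses, so they are covered by the same argument.

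\emph{Main obstacle.} The hard step is the normalization: verifying that after the $\kappa$-dependent linear changes and rescalings, the perturbation $E$ really falls into the class \eqref{eq:app-d3-normalized-class} with an $\epsilon_0$ independent of $\kappa$. In other words, the piece sizes must be chosen as a power of $\kappa$ small enough to dominate the growth $\kappa^{-C}$ of the higher derivatives of $\Phi$ near the coordinate planes. I would first try pieces of side $\kappa^{C}$ with $C$ large, and check the third-order Taylor remainder explicitly using $\theta_i\ge\kappa$.
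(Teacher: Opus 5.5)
Your overall architecture matches the paper's. You use conic coordinates with $\theta_i\gtrsim\kappa$, the normal form of Lemma~\ref{lem:d3-quadratic-normal-form}, and a localization at a power of $\kappa$ feeding Lemma~\ref{lem:d3-uniform-decoupling}. You then combine the cap-kernel $L^1$ bound with $(\sum_\nu\|f_\nu\|_6^6)^{1/6}\lesssim\|f\|_6$.

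The gap is in the normalization step, which you rightly flag as the crux. You localize and rescale only in $\theta$ and keep boundedly many $t$-boxes. That cannot put the phase in the class \eqref{eq:app-d3-normalized-class}.

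\begin{itemize}
\item That class controls $\partial_z^\alpha\partial_\theta^\gamma E$ for all $|\alpha|\ge1$, $|\gamma|\ge2$, not only the third-order angular remainder. With $|\gamma|=2$, it requires the angular Hessian $\partial_\theta^2\Psi(z,\theta)$ to be $\epsilon_0$-close in $C^N$ to a map \emph{linear} in $z$ on the whole fixed parameter box.
\item Parabolic rescaling in $\theta$ leaves every $|\gamma|=2$ derivative unchanged. This is the factor $K^{1-|\gamma|/2}$ in the identity following \eqref{eq:app-d3-exact-rescaling}, which is why that rescaling preserves the class but cannot create it.
\item Here $z\mapsto\partial_\theta^2\Phi(z,\theta)$ is genuinely nonlinear: it is $t$ times a function depending nonlinearly on $s$ (compare \eqref{eq:app-d3-hessian-A}--\eqref{eq:app-d3-hessian-B}). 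After conjugation by the normal-form matrices, two quantities are of size up to $\kappa^{-C}$: the derivatives $\partial_z^\alpha\partial_\theta^2$ with $|\alpha|\ge2$, and the variation of $\partial_z\partial_\theta^2$ across a fixed box. In general they are not below the fixed small $\epsilon_0$, even for $\kappa\sim1$.
\item The parameter change only satisfies $\|C^{-1}\|\lesssim\kappa^{-C}$, so a fixed $t$-box is not even a fixed box in the normalized parameter.
\item Shrinking the $t$-boxes without rescaling does not help, because it does not reduce second $z$-derivatives.
\end{itemize}

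The missing idea is to localize and rescale the parameters at a power of $\kappa$ as well. Following the paper:
\begin{itemize}
\item Set $\theta=\theta_0+\rho_0Lw$ and $z=z_0+\tau_0C\zeta$ with $\rho_0=\tau_0=c_0\kappa^B$.
\item Subtract $\Phi(z_0,\theta)$, which is absorbed into the input.
\item For every $z$, subtract the full angular-affine Taylor polynomial at $\theta_0$. This gives parameter-dependent translations with joint Jacobian one.
\item Divide by $\tau_0\rho_0^2$.
\end{itemize}
The nonlinear-parameter error is $O(\tau_0^2\rho_0^2)$ before division. Hence the normalized phase is within $\kappa^{-C_N}(\rho_0+\tau_0)<\epsilon_0$ of $\zeta\cdot Q_0(w)$ in $C^N$, once $B$ is chosen after $N$. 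The effective frequency becomes $\lambda'=\lambda\tau_0\rho_0^2$. The additional parameter boxes number polynomially many in $\kappa^{-1}$, so they cost only a polynomial factor.

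Three routine points are also missing:
\begin{enumerate}
\item The case $\lambda'<1$ must be handled separately. It is trivial, since then $\lambda\le\kappa^{-C}$.
\item The lemma's caps pull back to parallelograms of width about $\lambda^{-1/2}\tau_0^{-1/2}\|L\|$. They must be refined to genuine $\lambda^{-1/2}$ caps, at polynomial cost, before your kernel estimate applies.
\item Lemma~\ref{lem:d3-uniform-decoupling} has a compactly supported output amplitude and a remainder $\lambda^{-M}\|g\|_2$. To reach $L^6(\R^3\times\mathcal T)$ with $\|f\|_6$ on the right, you need a unit-cube localization of physical space. On each polynomially enlarged neighborhood use $\|f\|_2\lesssim\kappa^{-A}\|f\|_6$, and choose $M$ larger than the power of $\lambda$ lost.
\end{enumerate}
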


\begin{proof}
Permuting coordinates and fixing signs reduces to
$\xi=\lambda\rho(\theta,1)$, with $\rho\sim1$ and
$\theta_i\gtrsim\kappa$. Fixing the ratio $t_1/t_2$ gives
\eqref{eq:app-d3-reduced-phase}. The relevant change of variables and
its inverse have bounded Jacobians, so it suffices to prove the
estimate in the two remaining parameters, uniformly in that ratio.

We give the finite normalization which is responsible for the
polynomial dependence on $\kappa$. At a point $(z_0,\theta_0)$, choose
matrices $L,C$ from Lemma~\ref{lem:d3-quadratic-normal-form}, including
the factors converting Hessians to quadratic forms. Set
\[
 \theta=\theta_0+\rho_0Lw,\qquad
 z=z_0+\tau_0C\zeta,\qquad
 \rho_0=\tau_0=c_0\kappa^B,
\]
where $c_0>0$ is fixed sufficiently small and $B$ sufficiently large. Write
$F(z,\theta)=\Phi(z,\theta)-\Phi(z_0,\theta)$ and define
\[
 \mathcal R(z,\theta)=F(z,\theta)-F(z,\theta_0)
            -\partial_\theta F(z,\theta_0)\cdot(\theta-\theta_0).
\]
Taylor's integral formula and the polynomial bounds in
Lemma~\ref{lem:d3-quadratic-normal-form} give
\begin{equation}
 \left\|
 \frac{\mathcal R(z_0+\tau_0C\zeta,\theta_0+\rho_0Lw)}
      {\tau_0\rho_0^2}
   -\zeta\cdot Q_0(w)
 \right\|_{C^N}
 \lesssim_{r,N}\kappa^{-C_N}(\rho_0+\tau_0).
\end{equation}
The nonlinear-parameter error before division is
$O(\tau_0^2\rho_0^2)$: the full angular-affine Taylor polynomial has
been removed for every parameter value. Choosing $B$ after $N$ makes
the right hand side smaller than the fixed $\epsilon_0$ in
Lemma~\ref{lem:d3-uniform-decoupling}. The normalized phase satisfies
the three vanishing conditions of that lemma.

Apply the lemma at effective frequency
$\lambda' =\lambda\tau_0\rho_0^2$. The phase
$\Phi(z_0,\theta)$ is absorbed into the frequency-side input of the
decoupling inequality. The $L^6$ norms of the cap contributions are
estimated below using the original phase. The angular-affine terms
are spatial translations
depending on the parameter, with joint Jacobian one. The numbers of
boxes in the cover, all coordinate Jacobians and amplitude seminorms,
and the refinements of the pulled-back angular caps cost only fixed
powers of $\kappa^{-1}$. In particular, caps of width
$(\lambda')^{-1/2}$ pull back to angular sets of width comparable to
$\lambda^{-1/2}\tau_0^{-1/2}L$; refinement to ordinary
$\lambda^{-1/2}$ caps costs a polynomial factor. If $\lambda'<1$,
then $\lambda$ itself is bounded by a power of $\kappa^{-1}$, and the
elementary compact-frequency multiplier estimate suffices.
This gives decoupling for the original phase with coefficient
$C_{r,\varepsilon}\kappa^{-A}\lambda^{1/3+\varepsilon}$.

We finish by passing from decoupling to smoothing. On an angular cap
of width $\lambda^{-1/2}$, subtract the tangent linear function of
$\pm h_{r'}(D_t\xi)$. After the usual cap rescaling, the residual
phase and the amplitude have finitely many derivatives bounded by a
power of $\kappa^{-1}$. Integration by parts in the rescaled frequency
variables therefore bounds the convolution kernel in $L^1$ by
$C\kappa^{-A}$. This estimate is applied to the original phase,
after decoupling, and is uniform in the parameters. Also,
\[
 \left(\sum_\nu\|f_\nu\|_6^6\right)^{1/6}\lesssim\|f\|_6,
\]
by interpolation between almost orthogonality on $L^2$ and the
uniform cap-kernel bound on $L^\infty$. These two observations yield
\eqref{eq:app-d3-smoothing} with a compact spatial cutoff.

To remove that cutoff, partition physical space into unit cubes.
The kernel is rapidly decreasing outside a fixed polynomially
enlarged neighborhood of each cube, since the phase gradients and
the required symbol seminorms have polynomial bounds in
$\kappa^{-1}$. Localizing $f$ to those neighborhoods and summing
sixth powers gives the global spatial norm, with another polynomial
loss. The decoupling remainder is harmless: on each such neighborhood
$\|f\|_2\lesssim\kappa^{-A}\|f\|_6$, and the frequency-side
$L^2$ norm differs by at most a fixed power of $\lambda$.
The remainder order in Lemma~\ref{lem:d3-uniform-decoupling} is chosen
larger than this power. Finally integrate over the fixed dilation
ratio. All arguments use only finitely many symbol derivatives, so
they also apply to the stated differentiated amplitudes.
\end{proof}

\subsection{From local smoothing to annular maximal estimates}

We prove Proposition~\ref{prop:d3-annular-smoothing}. We work with
the original physical annulus, rather than the horizontally normalized
operator. Fix a signed graph patch and put
$\delta_*=\min(\delta_1,\delta_2)$. All constants below may depend
on the fixed graph cutoff and on $r$; none depends on $R$ or $\delta$.
The argument uses finitely many derivatives of the rescaled amplitudes
$B_\delta$, whose uniform bounds were established in the proof of
Proposition~\ref{prop:dyadic-patch-bounds}.

First suppose that $R\delta_*^r\ge1$, so both horizontal coordinates
are active. In the stationary sectors of
Lemma~\ref{lem:local-exact-sector}, returning to physical frequency
coordinates gives
\[
 |\xi_3|\asymp R,\qquad
 |\xi_i|\asymp R\delta_i^{r-1}\quad(i=1,2).
\]
In particular $|\xi|\asymp R$, and these sectors lie in a union of
signed cones with $|\xi_j|\ge c\delta_*^{r-1}|\xi|$ for all $j$.
The constants in these inclusions are uniform for $t\in[1,2]^3$;
we may therefore use enlarged frequency cutoffs independent of $t$.
The stationary part of the averaging multiplier is a finite sum of
terms
\begin{equation}
 R^{-1} e^{\pm i h_{r'}(\mathrm D_t\xi)} b_{\delta,R}(t,\xi),
 \qquad |\xi|\asymp R.
\end{equation}
For every fixed finite number of derivatives, the corresponding
order-zero symbol seminorms of $b_{\delta,R}$ are at most a fixed
power of $\delta_*^{-1}$. Indeed, the two stationary-phase factors
are $V_\delta\prod_{i=1}^2(R\delta_i^r)^{-1/2}$; after extracting
$R^{-1}$, this and the changes between the sector and physical
frequency coordinates have only polynomial dependence on
$\delta_*^{-1}$. The same is true of the parameter derivatives
required below. This follows either directly from the exact
factorization or from stationary phase on a fixed signed annulus.

Write $A^{\mathbf m}_{\mathrm{stat},R}f$ for this stationary part.
For every $\beta\in\{0,1\}^3$, differentiating its multiplier shows
that $R^{1-|\beta|}\partial_t^\beta
A^{\mathbf m}_{\mathrm{stat},R}$ has the phase and amplitude covered
by Proposition~\ref{prop:d3-conic-smoothing}. Applying that proposition
with $\kappa=c\delta_*^{r-1}$ gives
\begin{equation}
 \big\|R^{-|\beta|}\partial_t^\beta
 A^{\mathbf m}_{\mathrm{stat},R}f\big\|_{L^6_{x,t}}
 \lesssim_{r,\varepsilon}
 \delta_*^{-A}R^{-2/3+\varepsilon}\|f\|_6.
\end{equation}
Here and below we increase the finite exponent $A$ when necessary.
The scaled parameter Sobolev inequality
\cite[Lemma~2.1]{LLO2025-2}, with each parameter scale equal to $R$,
then yields
\begin{align*}
 \left\|\sup_{t\in[1,2]^3}
 |A^{\mathbf m}_{\mathrm{stat},R}f(\cdot,t)|\right\|_6
 &\lesssim R^{1/2}
 \sum_{\beta\in\{0,1\}^3}
 \big\|R^{-|\beta|}\partial_t^\beta
 A^{\mathbf m}_{\mathrm{stat},R}f\big\|_{L^6_{x,t}}\\
 &\lesssim \delta_*^{-A}R^{-1/6+\varepsilon}\|f\|_6.
\end{align*}

We also record why the nonstationary sectors are harmless for this
\emph{vertical}-frequency localization. In the present regime
$I=\{1,2\}$. Every sector other than the stationary ones has either
an index in $I\setminus S$ with $k_i=0$, or an index with $k_i\ge1$.
The exact factorization supplies, for any prescribed $K$, respectively
the factors $\lambda_i^{-K}$ or $(2^{k_i}\lambda_i)^{-K}$.
For a multiplier supported in a fixed rescaled frequency box, its
kernel $L^1$ norm is bounded by finitely many derivatives of that
rescaled multiplier. The same estimate applies after taking the
at most three parameter derivatives needed for the unscaled
Sobolev inequality. Differentiating the residual oscillatory factor
costs at most $R^{C_0}\prod_{i:k_i\ge1}2^{C_0k_i}$, where the finite
$C_0$ is fixed by this derivative requirement. Amplitude derivatives
are controlled by Lemma~\ref{lem:local-exact-sector}.
Using one of the nonstationary factors to gain a power of $R$,
and $\lambda_i=R\delta_i^r$, consequently gives
\[
 \left\|\sup_{t\in[1,2]^3}
 |A^{\mathbf m}_{S,k}P_R^{(3)}f(\cdot,t)|\right\|_6
 \lesssim_K
 \delta_*^{-C_K}R^{C_0-K}
 \prod_{i:k_i\ge1}2^{-(K-C_0)k_i}\|f\|_6
\]
for every nonstationary sector. The unused stationary factors are
bounded by one. Choosing $K>C_0+N+2$ and summing the remote indices
therefore bounds their total contribution by
$C_N\delta_*^{-C_N}R^{-N}\|f\|_6$. This argument includes arbitrarily
large horizontal frequencies; no replacement of $P_R^{(3)}$ by an
isotropic frequency projection is being made.

Finally, if $R\delta_*^r<1$, the elementary $L^1$ and $L^\infty$
maximal estimates in Proposition~\ref{prop:dyadic-patch-bounds} give
\[
 \|M_{\loc}^{\mathbf m}P_R^{(3)}f\|_6
 \lesssim R^{1/6}V_\delta\|f\|_6
 \le \delta_*^{-r/3}R^{-1/6}\|f\|_6.
\]
Thus the same conclusion holds in the inactive regime after enlarging
$A$. The finite sums over signs and graph charts preserve the bound.
This proves Proposition~\ref{prop:d3-annular-smoothing}.

\section*{Acknowledgments}
This work is supported by the National Research Foundation of Korea
(Grant No.~RS-2024-00461749). During the preparation of this manuscript, the authors used
OpenAI's Codex to assist with English-language editing, LaTeX
preparation, and the review of mathematical arguments.
The authors take full responsibility for the mathematical results
and the final content of the paper.

\end{document}